\newcommand{\abs}{\vspace{12pt}}
\DeclareMathOperator{\Con}{Con}
\DeclareMathOperator{\vol}{vol}
\DeclareMathOperator{\supp}{supp}
\DeclareMathOperator{\id}{id}
\DeclareMathOperator{\Int}{Int}
\DeclareMathOperator{\Clos}{Cl}
\DeclareMathOperator{\const}{const.}
\DeclareMathOperator{\diam}{diam}
\newcommand{\h}{h_{\text{top}}}
\DeclareMathOperator{\skp}{\langle .,. \rangle}
\DeclareMathOperator{\norm}{\|.\|}
\DeclareMathOperator{\la}{\langle}
\DeclareMathOperator{\ra}{\rangle}
\DeclareMathOperator{\es}{es}
\newcommand{\R}{\mathbb{R}}
\newcommand{\Z}{\mathbb{Z}}
\newcommand{\N}{\mathbb{N}}
\newcommand{\MM}{\mathfrak{M}}
\newcommand{\A}{\mathcal{A}}
\newcommand{\U}{\mathcal{U}}
\newcommand{\T}{{\mathbb{T}^2}}
\newcommand{\CC}{\mathcal{C}}
\newcommand{\NN}{\mathcal{N}}
\newcommand{\LL}{\mathcal{L}}
\newcommand{\HH}{\mathcal{H}}
\newcommand{\E}{\mathcal{E}}
\newcommand{\TT}{\mathcal{T}}
\newcommand{\J}{\mathcal{J}}
\newcommand{\s}{\mathscr{S}}
\newcommand{\M}{\mathcal{M}}
\newcommand{\F}{\mathcal{F}}
\newcommand{\G}{\mathcal{G}}
\newcommand{\e}{\varepsilon}
\newcommand{\ep}{\epsilon}
\newcommand{\Om}{\Omega}
\newcommand{\al}{\alpha}
\newcommand{\om}{\omega}
\newcommand{\lam}{\lambda}
\newcommand{\sig}{\sigma}
\theoremstyle{plain}
\newtheorem{defn}{Definition}[section]
\newtheorem{lemma}[defn]{Lemma}
\newtheorem{prop}[defn]{Proposition}
\newtheorem{thm}[defn]{Theorem}
\newtheorem{cor}[defn]{Corollary}
\newtheorem*{thm1}{Theorem I}
\newtheorem*{thm2}{Theorem II}
\newtheorem*{thm3}{Theorem III}
\theoremstyle{definition}
\newtheorem{rechnung}[defn]{}
\newtheorem{bemerk}[defn]{Remark}
\newcommand{\mane}{Ma\~n\'e}
\newcommand{\Mane}{Ma\~n\'e }
\newcommand{\Poincare}{Poincar\'e }
\begin{document}

\lhead{} \chead{\footnotesize \textsc{J.P. Schr\"oder -- Tonelli Lagrangians on the 2-torus and topological entropy}} \rhead{} \renewcommand{\headrulewidth}{0pt}

\title{Tonelli Lagrangian systems on the 2-torus  \linebreak
and topological entropy}

\author{Jan Philipp Schr\"oder\footnote{jan.schroeder-a57@rub.de. Faculty of Mathematics, Ruhr-Universit\"at Bochum, Germany.}}

\maketitle


\begin{abstract}
We study Tonelli Lagrangian systems on the 2-torus $\T$ in energy levels $\E$ above \mane's strict critical value. We analyize the structure of global minimizers in the spirit of Morse, Hedlund and Bangert. In the case where the topological entropy of the Euler-Lagrange flow in $\E$ vanishes, we show that there are invariant tori for all rotation vectors indicating integrable-like behavior on a large scale. On the other hand, using a construction of Katok, we give examples of reversible Finsler geodesic flows with vanishing topological entropy, but having ergodic components of positive measure in the unit tangent bundle $S\T$.
\end{abstract}

\renewcommand{\abstractname}{Acknowledgements}
\begin{abstract}
I wish to thank my Ph.D. supervisor Gerhard Knieper for many helpful discussions and support.
\end{abstract}

{\scriptsize \tableofcontents }

\abs\abs

\pagebreak

\section{Introduction and main results}

A Tonelli Lagrangian on a closed manifold $M$ is a smooth function $L:TM\to\R$, such that $L(v)$ grows superlinearly in $\|v\|$ and when restricted to the fibres $T_x M$ has positive definite Hessian, i.e. is strictly convex (on $M$ we fix some Riemannian background metric). The Euler-Lagrange flow $\phi^t$ of $L$ leaves the energy
\[ E(v)=d_vL(v)v-L(v) \]
invariant, so we can restrict the attention to fixed energy levels $\E_k := \{E=k\}$. The main object in this paper is the Euler-Lagrange flow
\[ \phi^t : \E_k \to \E_k \]
for fixed $k\in\R$ in the case where $M=\T=\R^2/\Z^2$ is the 2-torus. We ask if there are $\phi^t$-invariant 2-tori $\TT$ embedded in $\E_k$. In the case where $\phi^t$ is integrable in the sense of Liouville-Arnold it is well known that there are plenty such $\TT$, but generically there need not even be one. A weaker assumption than integrability is that the topological entropy of $\phi^t|_{\E_k}$ vanishes, i.e.
\[ \h(\phi^t,\E_k) = 0. \]

The main result in this paper is the following theorem, which we will prove in section \ref{top ent and invar tori}. We will define $\TT_h, \TT_h^\pm$ and the rotation vectors $h$ below, for information about \mane's critical values, in particular \mane's strict critical value $c_0(L)$, cf. \cite{contreras1}.

\begin{thm1}
Let $L$ be a Tonelli Lagrangian on the 2-torus $\T=\R^2/\Z^2$ with Euler-Lagrange flow $\phi^t$ and let $k>c_0(L)$, where $c_0(L)$ is \mane's strict critical value. If $\h(\phi^t,\E_k) = 0$, then there are $\phi^t$-invariant Lipschitz graphs $\TT_h, \TT_h^\pm \subset \E_k$ over the zero section for all possible rotation vectors $h$. If $v\in\E_k$ does not lie on one of these invariant tori, the orbit $\{\phi^tv\}$ lies in the space enclosed by two tori $\TT_h^-,\TT_h^+\subset\E_k$ of some common rational rotation vector $h$, while these tori intersect in periodic minimizers of rotation vector $h$.
\end{thm1}

\begin{figure}
\centering
\includegraphics[scale=0.9]{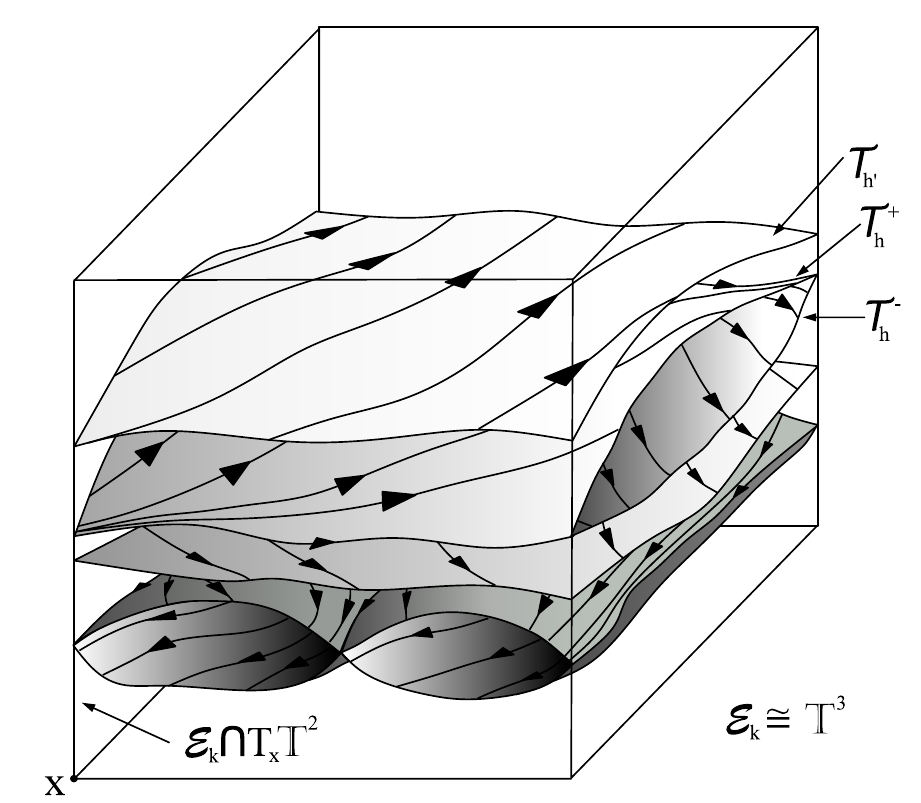}
\caption{The invariant tori $\TT_h,\TT_h^\pm$ in theorem I as graphs over the zero section $0_\T\cong \T$. Note that for $k>c_0(L)$, each $\E_k\cap T_x\T$ is topologically a sphere. Here these spheres are drawn as vertical lines and $0_\T$ can be thought of as the horizontal bottom of $\mathbb{T}^3 \cong \E_k$.}
\end{figure}

Theorem I was previously known in the class of Riemannian metrics due to Glasmachers and Knieper \cite{glasmachers1}. The first result in this direction, covering the class of monotone twist maps, is due to Angenent in 1991, cf. \cite{angenent}. Our approach to prove theorem I is different from that of Angenent, Glasmachers and Knieper, but similar to the techniques of Bosetto and Serra in \cite{bosetto serra}. 

A particular class of Tonelli Lagrangians arises from Finsler metrics, that is a function $F:TM\to[0,\infty)$, such that $F^2$ is strictly convex in the fibres as above, and such that $F$ is positively homogeneous in the fibres, $F(\lam v)=\lam F(v)$ for $\lam\geq 0$. We say that $F$ is reversible, if $F(-v)=F(v)$; if not stated otherwise, Finsler metrics are non-reversible. Here we can only assume that $F$ is smooth outside the zero section $0_M$. Apart from the smoothness issues in $0_M$ (which can be dealt with), the function
\[ L_F := \frac{1}{2}F^2 \]
is a special case of a Tonelli Lagrangian and its Euler-Lagrange flow, also denoted by $\phi^t$, is called its geodesic flow of $F$.

We will generalize the results of Morse \cite{morse}, Hedlund \cite{hedlund} and Bangert \cite{bangert} about minimal geodesics in Riemannian 2-tori to non-reversible Finsler metrics on $\T$. Minimal geodesics are curves $c:\R\to\T$, such that their lifts $\tilde c:\R\to\R^2$ globally realize the $F$-distance, i.e. $l_F(\tilde c[a,b])=d_F(\tilde c(a),\tilde c(b))$ for all $a\leq b$. These minimizers always exist and we shall use them in order to prove theorem I. Hence in section \ref{section mane sets} we will prove the following.

\begin{thm2}
Let $F$ be a Finsler metric (not assumed to be reversible) on the 2-torus $\T$. Then we have the following structure of the set of minimal geodesics. \begin{itemize}
\item[(i)] There is a global constant $D=D(F)\geq0$, such that each lifted minimal geodesic $\tilde c:\R\to\R^2$ has distance at most $D$ from a straight euclidean line in $\R^2$. Write $\delta(c)\in S^1$ for the slope of this line (the orientation of $c$ determines the direction of $\delta(c)$), $S\T=\{F=1\}\subset T\T$ and
\[ \G_\delta := \{ \dot c(0) \in S\T : \text{$\tilde c$ is a minimal geodesic with $\delta(c)=\delta$} \}, \quad \delta\in S^1. \]
The sets $\G_\delta$ are never empty.

\item[(ii)] If $\delta\in S^1$ has irrational slope then $\G_\delta$ is contained in a Lipschitz graph over the zero section, in particular no geodesics from $\G_\delta$ can intersect in $\T$. The set of recurrent vectors for the geodesic flow $\G_\delta^{rec}$ in $\G_\delta$ is a minimal set for the geodesic flow and for each $x\in\R^2$ there are two lifts $\tilde c_0,\tilde c_1$ from $\G_\delta^{rec}$, such that $x$ lies in the strip bounded by $\tilde c_0(\R),\tilde c_1(\R)$ and $\tilde c_0(t),\tilde c_1(t)$ are asymptotic for $t\to-\infty$ and $t\to \infty$. The projection $\pi(\G_\delta^{rec})\subset\T$ is either all of $\T$ or nowhere dense.

\item[(iii)] If $\delta \in S^1$ has rational slope, then $\G_\delta$ contains the non-empty set $\G_\delta^{per}$ of prime-periodic minimal geodesics. Either $\pi(\G_\delta^{per})=\T$ and $\G_\delta^{per}=\G_\delta$ is a Lipschitz graph over the zero section, or $\G_\delta$ decomposes into three non-empty sets $\G_\delta = \G_\delta^{per} \cup \G_\delta^+\cup\G_\delta^-$, where $\G_\delta^\pm$ consist of minimal geodesics heteroclinic to periodic minimal geodesics from $\G_\delta^{per}$. Each of the two sets $\G_\delta^{per} \cup \G_\delta^\pm$ is contained in a Lipschitz graph over the zero section.
\end{itemize}
\end{thm2}

We saw that $\h(\phi^t,\E_k)=0$ implies integrable behavior on a large scale. On the other hand, we construct in section \ref{section katok} the following examples in the fashion of Katok \cite{katok}. The result suggests that, as far as integrable behavior is concerned, the conclusion of theorem I might be optimal.

\begin{thm3}
Let $\Sigma\subset \R^3$ a closed surface of revolution, such that $\Sigma \cap S^2$ contains an open strip around the equator in the standard round sphere $S^2$. Then there exists a reversible Finsler metric $F_0$ arbitrarily close to the standard Finsler metric $\norm_{\R^3}|_{T\Sigma}$ on $\Sigma$ and a smoothly bounded solid torus $Z\subset S\Sigma = \{F_0=1\} \subset T\Sigma$ with non-empty interior and the following properties:
\begin{itemize}
\item[(i)] the geodesic flow of $F_0$ is ergodic in $Z$ (w.r.t. the measure induced by the pullback to $T\Sigma$ of the standard volume form in $T^*\Sigma$ via the Legendre tranform of $L_{F_0}$),
\item[(ii)] there is precisely one periodic geodesic of $F_0$ in $Z$,
\item[(iii)] the topological entropy of $\phi^t_{F_0}:S\Sigma\to S\Sigma$ vanishes.
\end{itemize}
\end{thm3}

\begin{bemerk}
$F_0$ coincides with $\al \cdot \norm_{\R^3}$ for some $\al\approx 1$ in the direction of the meridians, i.e. here there are still plenty of invariant tori. For the case $\Sigma=S^2$ one can make the size of $Z\cup -Z$ arbitrarily large, but we do not destroy all but finitely many periodic orbits.
\end{bemerk}


\section{Definitions, basic properties and notation}\label{section def}

The main assumption in theorem I is sub-exponential complexity of the Euler-Lagrange flow, expressed by $\h(\phi^t,\E_k)=0$. For basics about topological entropy we refer to \cite{walters}. This is the definition we will work with.

\begin{defn}[topological entropy]\label{def top ent}
Let $\phi^t: X\to X$ be a continuous flow in a compact metric space $(X,d)$. Set for $T>0,\e>0$
\[ s(T,\e) := \sup \{ \# S ~|~ S\subset X, \forall x,y\in S, x\neq y ~ \exists t\in[0,T] : d(\phi^tx,\phi^ty)>\e  \} . \]
The \emph{topological entropy of $\phi^t$} can be defined as
\[ \h(\phi^t) = \h(\phi^t,X) = \lim_{\e\to 0}\limsup_{T\to\infty} \frac{\log s(T,\e)}{T}. \]
\end{defn}

Let $M$ be a closed manifold with a Riemannian background metric $\skp$, norm $\norm$ and distance $d$ in $M$ and $TM$, $\pi:TM \to M$ the canonical projection. $0_M\subset TM$ denotes the zero section, $\Int A, \Clos A = \bar A, \partial A = \Clos A- \Int A$ and $\Con_x A$ denote the interior, closure, boundary and connected component of $x\in A$, respectively. A gap of $A$ refers to a connected component of the complement of $A$. For two curves $\gamma_1:[a,b]\to M, \gamma_2:[c,d]\to M$ with $\gamma_1(b)=\gamma_2(c)$ we write
\[ \gamma_1 * \gamma_2: [a,d-c+b]\to M, \quad \gamma_1 * \gamma_2(t):= \begin{cases} \gamma_1(t) & : t\in [a,b] \\ \gamma_2(t+c-b) & : t\in [b,d-c+b]\end{cases}. \]
$C^{ac}(I,M),C^{ac}_{loc}(I,M)$ denote the sets of curves $c:I\to M$ that are absolutely continuous (on compact sets in the latter), endowed with the topologies of $C^0,C^0_{loc}$-convergence. For a Tonelli Lagrangian $L:TM\to\R$ we write
\[ A_L : C^{ac}([a,b],M) \to \R, \quad A_L(c)=A_L(c[a,b])=\int_a^b L(\dot c) dt \]
for the Lagrangian action and
\[ c_v(t) := \pi \phi^tv \]
for the projected flow lines of the Euler-Lagrange flow $\phi^t$ of $L$. For basics on Tonelli Lagrangian systems we refer to \cite{sorrentino} and \cite{contreras}.

\subsection{Mather's and \mane's theories}

In the sequel we will work with the so called Mather theory for Tonelli Lagrangians. We will briefly recall the relevant definitions and set the notation, refering to \cite{sorrentino} for the details. Most of the notation is the same here and there (the biggest difference being that here we write $L_\eta$ for $L-\eta+c(L-\eta)$ instead of $L-\eta$). We fix a Tonelli Lagrangian $L$ on $M$.

\begin{defn}[\mane] \label{def semistatic}
We write
\begin{align*}
\Phi_L(x,y) & = \inf_{T>0} \inf\{ A_L(c[0,T]) : c \in C^{ac}([0,T], M) , c(0)=x, c(T)=y \}, \\
c(L) & = \inf \{ k \in \R:  \forall \text{ closed curves } c : A_{L+k}(c) \geq 0\}
\end{align*}
for \emph{\mane's potential} and \emph{\mane's critical value}. For a closed 1-form $\eta$ on $M$ regarded as a function $\eta:TM\to\R$ we write
\[ L_\eta = L-\eta + \al(\eta) , \qquad \al(\eta) := c(L-\eta) \]
(where $\al:H^1(M,\R)\to\R$ is called \emph{Mather's $\al$-function}) An absolutely continuous curve $c:\R\to M$ is an \emph{$\eta$-semistatic}, if $A_{L_\eta}(c[a,b]) = \Phi_{L_\eta}(c(a),c(b))$ for all $a\leq b$. The \emph{\Mane set of cohomology $[\eta]\in H^1(M,\R)$} is defined as
\[ \NN_\eta = \{ \dot c(0) : c \text{ is $\eta$-semistatic} \} . \]
\end{defn}

\begin{bemerk}
Observe that for a closed 1-form $\eta$ and $k\in\R$ the modified Lagrangians $L-\eta+k$ are still Tonelli and $L,L-\eta+k$ have the same Euler-Lagrange flow. $\al$ and the \Mane set depend only on the cohomology class $[\eta]\in H^1(M,\R)$. $c(L)$ and $\Phi_{L_\eta}(x,y)$ are finite and the \Mane set $\NN_\eta$ is contained in the compact energy level $\{ E = \al(\eta) \}$.\end{bemerk}

We will frequently use the following two semi-continuity properties.

\begin{prop}[lower semi-continuity of the action, p. 174 in \cite{mather}]\label{A semi-cont}
For any $d\in \R$ and any compact set $K\subset \tilde M$, where $\tilde M$ is a covering of $M$, the sets
\[  \{ c\in C^{ac}([a,b], K) : A_L(c ) \leq d \} \]
are compact w.r.t. $C^0$-convergence. In particular, if $c_n\to c$ in $C^0$, we have
\[ A_L(c)\leq \liminf A_L(c_n). \]
\end{prop}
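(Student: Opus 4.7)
The plan is to argue by Arzelà--Ascoli together with the classical lower semi-continuity of convex integral functionals. Since $\tilde M\to M$ is a covering, $L$ lifts to a deck-invariant Tonelli Lagrangian on $T\tilde M$; we work there with the pulled-back background metric. Compactness of $M$ and superlinearity give a uniform lower bound $L\geq c_0\in\R$ and, for each $N>0$, a constant $C_N$ with $L(v)\geq N\|v\|-C_N$ for all $v$. For a sequence $(c_n)\subset C^{ac}([a,b],K)$ with $A_L(c_n)\leq d$ and any measurable $E\subset[a,b]$, the estimate
$$N\int_E \|\dot c_n\|\,dt \;\leq\; \int_E L(\dot c_n)\,dt + C_N |E| \;\leq\; d - c_0(b-a) + (c_0+C_N)|E|,$$
together with the choice of first $N$ large and then $|E|$ small, yields equi-integrability of $\{\|\dot c_n\|\}$ in $L^1([a,b])$.

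Equi-integrability implies equicontinuity of $(c_n)$ via $d(c_n(s),c_n(t))\leq \int_s^t \|\dot c_n\|\,dt'$, so the Arzelà--Ascoli theorem (using compactness of $K$) gives a subsequence converging uniformly to some continuous $c:[a,b]\to K$. Applying Dunford--Pettis in a finite chart cover of $K$, a further subsequence satisfies $\dot c_n \rightharpoonup w$ weakly in $L^1$. Passing to the limit in $c_n(t)=c_n(a)+\int_a^t \dot c_n\,ds$ identifies $w$ with the distributional derivative of $c$, so $c$ is absolutely continuous with $\dot c=w$.

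The remaining inequality $A_L(c)\leq \liminf_n A_L(c_n)$ follows because $L$ is continuous, fibre-wise convex and bounded below: by Mazur's lemma, convex combinations of $(\dot c_n)$ converge strongly in $L^1$, hence pointwise a.e.\ along a subsequence, to $\dot c$; Fatou combined with fibre-wise convexity of $L$ then yields the bound (equivalently, invoke Ioffe's lower semi-continuity theorem). This simultaneously proves the \emph{in particular} statement and places $c$ in the sublevel set, making the latter sequentially, hence (since $C^0([a,b],K)$ is metrizable) topologically compact. The main obstacle is precisely this lower semi-continuity step: the equicontinuity bookkeeping is routine, but transferring weak control on the derivatives into a lower estimate on a nonlinear functional of them relies essentially on fibre-wise convexity of $L$; without it, oscillating $\dot c_n$ could yield $\lim_n A_L(c_n)<A_L(c)$.
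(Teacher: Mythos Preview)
The paper does not supply its own proof of this proposition; it is quoted from Mather \cite{mather} (p.~174) and used as a black box throughout. Your argument is essentially the classical Tonelli--Serrin proof of sequential weak lower semi-continuity for convex integrands, and it is correct.

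One small caveat: the Mazur-lemma route you sketch needs a bit more care, because the convex combinations $\sum_i \lambda_i \dot c_{n_i}$ sit over \emph{different} base curves $c_{n_i}$, so fibre-wise convexity alone does not immediately compare $L\bigl(c(t),\sum_i\lambda_i\dot c_{n_i}(t)\bigr)$ with $\sum_i\lambda_i L\bigl(c_{n_i}(t),\dot c_{n_i}(t)\bigr)$. You need the uniform convergence $c_n\to c$ together with the equi-integrability of $\{L(\dot c_n)\}$ (which you already have from $A_L(c_n)\leq d$ and $L\geq c_0$) to control the discrepancy in the base variable; alternatively, as you also suggest, invoke Ioffe's theorem directly, which is tailored to exactly this situation (strong convergence in the state, weak convergence in the control, convexity in the control). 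Either way the conclusion stands. The chart-wise bookkeeping for Dunford--Pettis and for the identity $c_n(t)=c_n(a)+\int_a^t\dot c_n$ on a manifold is routine once $[a,b]$ is partitioned into subintervals on which all $c_n$ (for large $n$) remain in a single coordinate patch, using the uniform convergence already established.
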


\begin{prop}[upper semi-continuity of the \Mane set w.r.t. $\eta$] \label{N semi-cont}
Let $\eta_n\to \eta$ as closed 1-forms and $v_n\in \NN_{\eta_n}$ with $v_n\to v$ in $TM$. Then $v\in\NN_\eta$.
\end{prop}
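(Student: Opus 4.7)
The plan is to show that the projected Euler--Lagrange flow line $c = c_v$ is $\eta$-semistatic by passing to the limit in the $\eta_n$-semistatic identities satisfied by $c_n = c_{v_n}$. Since $v_n \to v$ and the Euler--Lagrange flow depends continuously on initial data, $\phi^t v_n \to \phi^t v$ uniformly on any compact time interval, so $\dot c_n \to \dot c$ uniformly on each $[a,b]$. To conclude $v \in \NN_\eta$ I need to verify $A_{L_\eta}(c[a,b]) = \Phi_{L_\eta}(c(a), c(b))$ for every $a \leq b$; the inequality ``$\geq$'' is immediate from the definition of $\Phi$, so only ``$\leq$'' requires work.

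First I would observe $A_{L_{\eta_n}}(c_n[a,b]) \to A_{L_\eta}(c[a,b])$, using (i) the uniform convergence $\dot c_n \to \dot c$ on $[a,b]$ together with continuity of $L$ and of $\eta_n \to \eta$, and (ii) continuity of Mather's $\al$-function on $H^1(\T,\R)$, which is automatic because $\al$ is a finite convex function on a finite-dimensional vector space. Then, for an arbitrary competitor $\gamma : [0,T] \to \T$ with $\gamma(0) = c(a), \gamma(T) = c(b)$, I would build nearby competitors $\gamma_n$ from $c_n(a)$ to $c_n(b)$ by prepending a short constant-speed geodesic $\sigma_n^-$ from $c_n(a)$ to $c(a)$, parametrized over a time interval of length $\tau_n := \sqrt{d(c_n(a), c(a))}$, and appending an analogous $\sigma_n^+$ at the other end. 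Then $\tau_n \to 0$ while the connecting speeds $d(c_n(a),c(a))/\tau_n \to 0$, so these short pieces lie in a compact set of $T\T$ and $L_{\eta_n}$ is uniformly bounded along them; hence $A_{L_{\eta_n}}(\sigma_n^\pm) \to 0$, whereas $A_{L_{\eta_n}}(\gamma) \to A_{L_\eta}(\gamma)$ by $\eta_n \to \eta$ and $\al(\eta_n) \to \al(\eta)$.

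The conclusion then follows from the $\eta_n$-semistaticity of $c_n$:
\[ A_{L_{\eta_n}}(c_n[a,b]) = \Phi_{L_{\eta_n}}(c_n(a), c_n(b)) \leq A_{L_{\eta_n}}(\gamma_n) = A_{L_{\eta_n}}(\sigma_n^-) + A_{L_{\eta_n}}(\gamma) + A_{L_{\eta_n}}(\sigma_n^+). \]
Letting $n \to \infty$ gives $A_{L_\eta}(c[a,b]) \leq A_{L_\eta}(\gamma)$, and taking infimum over $\gamma$ and $T$ yields $A_{L_\eta}(c[a,b]) \leq \Phi_{L_\eta}(c(a), c(b))$, as required. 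The main delicate point is arranging that the connecting curves have vanishing $L_{\eta_n}$-action while still joining the drifting endpoints; this is what the choice $\tau_n = \sqrt{d(c_n(a), c(a))}$ accomplishes, after noting that $v_n \in \{E = \al(\eta_n)\}$ together with $\al(\eta_n) \to \al(\eta)$ keeps $\{v_n\}$ in a compact subset of $T\T$, so that the endpoints $c_n(a), c_n(b)$ remain in a fixed compact set of $\T$.
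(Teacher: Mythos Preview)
Your argument is correct and follows essentially the same route as the paper's sketch: both take a competitor $\gamma$ for $c[a,b]$, extend it by short connecting segments to a competitor for $c_n[a,b]$, invoke $\eta_n$-semistaticity of $c_n$, and pass to the limit. The paper phrases this by contradiction (assume a strictly better $\gamma$ exists) while you argue directly, but the content is identical; your version simply fills in the details the paper leaves implicit, in particular the $\tau_n=\sqrt{d(c_n(a),c(a))}$ parametrisation ensuring the connecting pieces carry vanishing action, and the appeal to continuity of $\al$. One cosmetic point: you write the competitors as maps into $\T$, but the proposition is stated for a general closed manifold $M$; nothing in your argument uses the torus.
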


We only give the idea, which is standard.

\begin{proof}[Sketch of the proof of \ref{N semi-cont}]
Suppose $v\notin \NN_\eta$, then there are $a<b$ and a curve $\gamma$ with the same endpoints as $c[a,b]$ but having less action w.r.t. $L_\eta$. Up to a small error tending to $0$, $\gamma$ will also have less action than the $c_{v_n}[a,b]$ w.r.t. $L_{\eta_n}\approx L_\eta$, while $\gamma$ (nearly) connects also the endpoints of $c_{v_n}[a,b]$, a contradiction.
\end{proof}

The following definition of $\MM_\eta$ coincides with the original definition of Mather, cf. 5.28 in \cite{sorrentino}

\begin{defn}[Mather]
Let $\MM(L)$ be the set of $\phi^t$-invariant probability measures on $TM$ with compact support, and define the \emph{Mather set of cohomology $[\eta]\in H^1(M,\R)$} by
\[ \M_\eta = \bigcup_{\mu \in \MM_\eta} \supp \mu , \quad \MM_\eta = \{ \mu \in \MM(L) :  \supp \mu \subset \NN_\eta \} . \]
To a measure $\mu\in\MM(L)$ we associate its \emph{rotation vector} $\rho(\mu) \in H_1(M,\R)$ (regarding $H^1, H_1$ as dual spaces with pairing $\skp$) by
\[ \la \rho(\mu) , [\xi] \ra = \int \xi d\mu \quad \forall [\xi]\in H^1(M,\R) \]
(recall $\int \xi d\mu=0$ for exact 1-forms $\xi$). Define \emph{Mather's $\beta$-function} and the \emph{Mather set of a homology class $h$} by
\begin{align*}
& \beta:H_1(M,\R)\to\R, \quad \beta(h) = \inf \{ A_L(\mu) : \mu\in\MM(L),\rho(\mu)=h \} , \\
& \M^h = \bigcup_{\mu \in \MM^h} \supp \mu, \quad \MM^h = \{ \mu \in \MM(L) : \rho(\mu)=h, A_L(\mu)=\beta(h) \}.
\end{align*}
\end{defn}

One has $\M^h\neq\emptyset\neq\M_\eta$ \cite{mather}. The two minimizing procedures are somewhat dual as seen in the following proposition. We denote by $\partial f(x)$ the set of subgradients of a convex function $f$ at $x$. In particular, if $f$ is differentiable at $x$, then $\partial f(x)= \{\nabla f(x) \}$. We refer to \cite{rockafellar} for basics about convex analysis.

\begin{prop}[4.23 and 4.26 in \cite{sorrentino}]\label{mather dual} \begin{itemize}
\item Mather's functions
\[ \al:H^1(M,\R)\to \R, \quad \beta:H_1(M,\R)\to \R \]
are convex, in particular continuous, have superlinear growth and are convex duals of each other.
\item $h\in \partial\al(\eta) \quad \iff \quad \eta\in\partial\beta(h) \quad \iff \quad \M^h \subset \M_\eta$.
\item $\M_\eta = \bigcup_{h \in\partial\al(\eta) } \M^h$.
\item $\al$ is constant on the closed convex sets $\partial\beta(h)$, which we denote by $\F^h$.
\end{itemize} \end{prop}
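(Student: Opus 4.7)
The key identity grounding every item is the Ma\~n\'e variational formula
\[
\alpha(\eta)\;=\;\sup_{\mu\in\MM(L)}\bigl(\la\rho(\mu),[\eta]\ra - A_L(\mu)\bigr),
\]
obtained by applying the classical relation $c(L)=-\inf_{\mu\in\MM(L)}A_L(\mu)$ to the Tonelli Lagrangian $L-\eta$, which shares its Euler--Lagrange flow with $L$. Grouping the supremum according to $h=\rho(\mu)$ rewrites this as $\alpha(\eta)=\sup_{h\in H_1}(\la h,[\eta]\ra-\beta(h))$, i.e.\ $\alpha=\beta^*$. Convexity of $\alpha$ is then automatic as a supremum of affine functionals in $\eta$, and convexity of $\beta$ follows by convex-combining two admissible measures and passing to infima.

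For the remainder of (i), the superlinear growth of $\beta$ is the only nontrivial step. For $h\in H_1(M,\R)$ I would pick a closed $1$-form $\eta_h$ with $\|\eta_h\|_\infty=1$ realising the dual norm of $h$; then $\rho(\mu)=h$ forces $\int\eta_h\,d\mu=\la h,[\eta_h]\ra$ and hence $\int\|v\|\,d\mu\ge c\|h\|$. Combining with a Tonelli lower bound $L(v)\ge\psi(\|v\|)-A$ for some superlinear $\psi$, Jensen's inequality produces $\beta(h)/\|h\|\to\infty$. Biduality of proper convex lower semicontinuous functions then gives $\beta=\alpha^*$, superlinear growth of $\alpha$, and continuity of both functions on their finite-dimensional domains.

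For (ii), the equivalence $h\in\partial\alpha(\eta)\iff[\eta]\in\partial\beta(h)$ is the Fenchel--Young criterion $\alpha(\eta)+\beta(h)=\la h,[\eta]\ra$ attached to the conjugate pair. To fold in the Mather sets I would use the standard characterisation $\mu\in\MM_\eta\iff\int L_\eta\,d\mu=0$ (equivalently $A_L(\mu)-\la\rho(\mu),[\eta]\ra=-\alpha(\eta)$), available for Tonelli systems because the Ma\~n\'e set contains the supports of precisely those invariant measures that minimise $\int L_\eta$. A measure $\mu\in\MM^h$ has $\rho(\mu)=h$ and $A_L(\mu)=\beta(h)$, so its membership in $\MM_\eta$ collapses to exactly the Fenchel--Young equality; this yields $\MM^h\subset\MM_\eta\iff h\in\partial\alpha(\eta)$ and hence $\M^h\subset\M_\eta$ at the level of supports.

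The nontrivial inclusion in (iii) comes from ergodic decomposition: for $v\in\M_\eta$ carried by some $\mu\in\MM_\eta$, write $\mu=\int\mu_e\,dP(e)$; since $\int L_\eta\,d\mu_e\ge 0$ always while the $P$-average vanishes, $P$-a.e.\ component lies in $\MM_\eta$, hence by (ii) in $\MM^{h_e}$ with $h_e:=\rho(\mu_e)\in\partial\alpha(\eta)$, and $v\in\supp\mu$ falls inside the closed invariant set $\bigcup_{h\in\partial\alpha(\eta)}\M^h$. Finally, (iv) reads off directly from Fenchel--Young: for every $\eta\in\F^h=\partial\beta(h)$ one has $\alpha(\eta)=\la h,[\eta]\ra-\beta(h)$, so $\alpha-\la h,\cdot\ra\equiv -\beta(h)$ on $\F^h$, which is the content of the constancy statement. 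The main technical obstacle is the superlinearity estimate in (i), the single step that genuinely uses the Tonelli structure of $L$ rather than pure convex-duality bookkeeping; once this is in place, all remaining items reduce to a mechanical combination of Fenchel--Young, biduality, and ergodic decomposition of $\MM_\eta$.
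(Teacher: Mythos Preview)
Your treatment of (i)--(iii) is essentially the standard route and matches what the paper imports from \cite{sorrentino}; there is nothing to add there.

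Your argument for (iv), however, has a genuine gap. From Fenchel--Young you correctly extract that $\alpha(\eta)=\la h,[\eta]\ra-\beta(h)$ for every $\eta\in\F^h=\partial\beta(h)$, i.e.\ that $\alpha$ is \emph{affine} along $\F^h$. But the claim is that $\alpha$ is \emph{constant} on $\F^h$, and this does not follow from convex duality alone: for a general convex pair $(\alpha,\beta)$ on $\R$, take $\beta(h)=\max(0,|h|-1)$; then $\partial\beta(1)=[0,1]$ and $\alpha(\eta)=\eta$ on that interval. So your sentence ``which is the content of the constancy statement'' is not justified.

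The paper's own argument (the one line it actually supplies for this proposition) uses a dynamical input you have not invoked: $\M_\eta$ is contained in the fixed energy level $\{E=\alpha(\eta)\}$. Combined with (ii), for any $\eta\in\partial\beta(h)$ one has $\emptyset\neq\M^h\subset\M_\eta\subset\{E=\alpha(\eta)\}$, so the nonempty set $\M^h$ lies simultaneously in all of the hypersurfaces $\{E=\alpha(\eta)\}$ as $\eta$ ranges over $\F^h$, forcing $\alpha(\eta)$ to be the same value throughout. This is precisely the extra Lagrangian structure (the energy-level confinement of Mather sets) that upgrades ``affine'' to ``constant''.
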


The last statement can be seen as follows: If $\eta\in \partial\beta(h)$, then $\M^h\subset\M_\eta\subset \{ E=\al(\eta) \}$. Since $\M^h\neq\emptyset$, the claim follows.

\subsection{Fathi's weak KAM theory}

Another approach to the theory of minimizers in Lagrangian systems is Fathi's weak KAM theory. A complete introduction is given in \cite{fathi}, to which we refer for proofs. Fix again a Tonelli Lagrangian $L$ on the closed manifold $M$. In the following we will frequently write 
\[ f |_x^y = f(y)- f(x). \]

\begin{defn}[Fathi] \label{def dominated} \begin{itemize}
\item Let $u:M\to \R$ be a function (no a priori regularity) and $\eta$ a closed 1-form on $M$. We say that $u$ is \emph{(critically) dominated}, written $u\prec L_\eta$, if
\[ u|_x^y \leq \Phi_{L_\eta}(x,y) \qquad  \forall x,y\in M. \]

\item A curve $c\in C_{loc}^{ac}(I,M)$ is said to be \emph{calibrated} on an interval $I\subset\R$ w.r.t. $u$ and $\eta$, if for all $a\leq b$ in $I$ we have
\[ u\circ c|_a^b = A_{L_\eta}(c[a,b]) .\]

\item For $u\prec L_\eta$ write (suppressing $\eta$ in the notation)
\[ \J(u) := \{ \dot c(0) : \text{ $c\in C_{loc}^{ac}(\R,M)$ is calibrated on $\R$ w.r.t. $u,\eta$} \} .\]

\item A function $u\prec L_\eta$ is a \emph{backward weak KAM solution}, if for any $x\in M$ there is a curve $\gamma_x$ calibrated on $(-\infty,0]$ with $\gamma_x(0)=x$. Analogously, a \emph{forward weak KAM solution} has calibrated curves $\gamma_x:[0,\infty)\to M$.
\end{itemize}\end{defn}

Here are some basic results.

\begin{prop}[4.2.1, 4.13.2 and 4.3.8 in \cite{fathi}]\label{calibration} 
Let $u\prec L_\eta$ and $c$ be a curve calibrated on $I\subset\R$ w.r.t. $u,\eta$. Then: \begin{itemize}
\item $u$ is globally Lipschitz continuous, in particular $du$ exists a.e. in $M$.
\item $du$ exists everywhere on $c(\Int(I))$.
\item Let $\LL_{L_\eta} = \LL_L -\eta = \frac{\partial L_\eta}{\partial v}$ be the Legendre transform of $L_\eta$. Then
\[ \forall t\in \Int(I) : \qquad du ( c(t) ) = \LL_{L_\eta} \circ \dot c(t) = \LL_L\circ \dot c(t) - \eta_{c(t)} ~\in T_{c(t)}^*M. \]
\end{itemize} \end{prop}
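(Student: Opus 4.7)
The three claims will be proved in order, the key tool in each case being the interplay between the pointwise domination inequality $u|_x^y \leq \Phi_{L_\eta}(x,y)$ and the equality along the calibrated curve, $u\circ c|_a^b = A_{L_\eta}(c[a,b])$.

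For the Lipschitz property I would first establish a local Lipschitz bound on $\Phi_{L_\eta}$: given $x,y$ with $d(x,y) < \inj(M)$, connect them by a minimizing background geodesic of length $d(x,y)$ traversed at unit Riemannian speed. Since $L_\eta$ is smooth and the unit tangent bundle is compact, the action is at most $K\cdot d(x,y)$ with $K := \max_{\|v\|=1} L_\eta(v)$. Domination then gives $u|_x^y \leq K\cdot d(x,y)$, and interchanging $x,y$ yields $|u(y)-u(x)| \leq K\cdot d(x,y)$; by chaining on the compact manifold $M$ this is global. Rademacher's theorem then supplies the a.e.\ existence of $du$.

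For the existence of $du$ on $c(\Int(I))$ and the Legendre identification, fix $t_0 \in \Int(I)$, set $x_0=c(t_0)$, and pick $\delta>0$ with $[t_0-\delta,t_0+\delta]\subset I$. Working in a fixed coordinate chart around $x_0$, for $y$ near $x_0$ define the variation
\[ \gamma_y(s) = c(s) + \bigl(1-\tfrac{s-t_0}{\delta}\bigr)(y-x_0), \qquad s\in[t_0,t_0+\delta], \]
so $\gamma_y(t_0)=y$ and $\gamma_y(t_0+\delta)=c(t_0+\delta)$. Since $c$ is calibrated it is locally minimizing and hence a $C^2$ solution of the Euler-Lagrange equation, so the first-variation formula for $L_\eta$ reads
\[ A_{L_\eta}(\gamma_y) = A_{L_\eta}(c[t_0,t_0+\delta]) - \LL_{L_\eta}(\dot c(t_0))(y-x_0) + O(|y-x_0|^2). \]
Combining the domination bound $u(c(t_0+\delta)) - u(y) \leq \Phi_{L_\eta}(y,c(t_0+\delta)) \leq A_{L_\eta}(\gamma_y)$ with the calibration identity $u(c(t_0+\delta)) - u(x_0) = A_{L_\eta}(c[t_0,t_0+\delta])$ yields
\[ u(y) - u(x_0) \geq \LL_{L_\eta}(\dot c(t_0))(y-x_0) - O(|y-x_0|^2). \]
Running the analogous argument in reverse on $[t_0-\delta,t_0]$, with a variation that starts at $c(t_0-\delta)$ and ends at $y$, produces the matching upper bound. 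These two estimates sandwich $u(y) - u(x_0)$ and establish differentiability at $x_0$ with $du(x_0) = \LL_{L_\eta}(\dot c(t_0)) = \LL_L(\dot c(t_0)) - \eta_{x_0}$.

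The main obstacle is the uniformity of the $O(|y-x_0|^2)$ remainder as $y\to x_0$. For this I need (a) that $c$ solves the E-L equation on $\Int(I)$ so the first-order terms on the interior cancel and only the boundary contribution $-\LL_{L_\eta}(\dot c(t_0))(y-x_0)$ survives, and (b) an a priori $C^1$ bound on $c$ near $t_0$ so the second-order Taylor remainder is controlled by a constant depending only on $L$, the chart, and a neighbourhood of $\dot c(t_0)$. Both ingredients follow from the Tonelli hypothesis together with energy conservation along calibrated curves (they lie on $\{E=\al(\eta)\}$, cf.\ the remark after Definition~\ref{def semistatic}), so $|\dot c|$ is bounded on $[t_0-\delta,t_0+\delta]$ and the chart computation localizes uniformly; the manifold case then reduces to the flat computation above.
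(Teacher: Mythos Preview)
The paper does not supply its own proof of this proposition; it is stated with a reference to Fathi's monograph (sections 4.2.1, 4.13.2 and 4.3.8) and used thereafter as a known input. Your argument is correct and is essentially the standard proof one finds in Fathi: the Lipschitz bound comes from testing domination against short unit-speed background geodesics, and differentiability along the calibrated curve follows by sandwiching $u(y)-u(x_0)$ between the forward and backward first-variation estimates, which simultaneously identifies $du(c(t_0))$ with $\LL_{L_\eta}(\dot c(t_0))$. Your handling of the regularity issues---calibrated curves are fixed-time Tonelli minimizers, hence $C^2$ Euler--Lagrange solutions, and they sit on a compact energy level so $\dot c$ is uniformly bounded near $t_0$---is also the standard route, so there is nothing to compare against and nothing missing.
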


\begin{thm}[6.20 and 6.21 in \cite{sorrentino}]\label{J(u)}
The sets $\J(u)$ are non-empty, compact and $\phi^t$-invariant. We have
\[ \NN_\eta = \bigcup_{u \prec L_\eta} \J(u), \quad \M_\eta \subset \bigcap_{u \prec L_\eta} \J(u). \]
$\pi$ is injective on the sets $\J(u)$ and $(\pi|_{\J(u)})^{-1}$ is Lipschitz. Moreover let $K\subset H^1(M,\R)$ be compact. Then the Lipschitz constant of $(\pi|_{\J(u)})^{-1}$ can be chosen to be the same for all $u\prec L_\eta, \eta\in K$.
\end{thm}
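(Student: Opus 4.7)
The plan is to assemble the theorem from three independent pieces: the internal structure of a single set $\J(u)$, the two set-theoretic identifications with $\NN_\eta$ and $\M_\eta$, and Mather's graph theorem together with its $\eta$-uniform version. Throughout I will use (via Proposition \ref{calibration}) that any calibrated curve $c$ solves Euler--Lagrange with constant energy $\al(\eta)$: $du=\LL_{L_\eta}\dot c$ along $c$, which combined with domination forces $E(\dot c)\equiv\al(\eta)$; hence calibrated curves lie in the compact level $\{E=\al(\eta)\}$ with uniformly bounded velocity.

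Given this, non-emptiness of $\J(u)$ would follow by a standard diagonal argument: take curves $c_T\colon[-T,T]\to M$ that nearly realize the infimum defining $\Phi_{L_\eta}(c_T(-T),c_T(T))$; the speed bound and Proposition \ref{A semi-cont} allow extraction of a subsequential limit $c\colon\R\to M$, and semicontinuity of the action combined with the equality case of domination forces $c$ to be calibrated w.r.t.\ $u$. Compactness of $\J(u)$ follows by the same semicontinuity argument applied to a convergent sequence $v_n\to v$ in $\J(u)$; flow-invariance is immediate from time translation.

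For $\NN_\eta=\bigcup_{u\prec L_\eta}\J(u)$, the inclusion $\supset$ is automatic: on a calibrated $c$ one has $u|_{c(a)}^{c(b)}=A_{L_\eta}(c[a,b])\le\Phi_{L_\eta}(c(a),c(b))$ by domination, with the reverse trivial, so $c$ is $\eta$-semistatic. Conversely, given an $\eta$-semistatic $c$, a suitable backward weak KAM solution $u$ (built by a Busemann/Peierls-type limit attached to the past of $c$) dominates and calibrates $c$ on all of $\R$, so $\dot c(0)\in\J(u)$. The inclusion $\M_\eta\subset\bigcap_u\J(u)$ is the standard ``Mather measure absorbs every sub-action'' argument: for $\mu\in\MM_\eta$ and $u\prec L_\eta$, integrating domination against $\mu$ and using $\phi^t$-invariance to cancel the $du$-terms gives $\int(L_\eta-du\cdot v)\,d\mu=0$ with nonnegative integrand; equality holds $\mu$-a.e., Poincar\'e recurrence promotes this into calibration of the full orbit, and closedness of $\J(u)$ extends the conclusion from a full-measure set to $\supp\mu$.

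The final piece, and the main obstacle, is Mather's graph theorem. Injectivity of $\pi|_{\J(u)}$ is immediate from Proposition \ref{calibration}: two vectors $v_1,v_2\in\J(u)$ with $\pi v_1=\pi v_2=x$ satisfy $\LL_{L_\eta}v_1=du(x)=\LL_{L_\eta}v_2$, and $\LL_{L_\eta}$ is a fibrewise diffeomorphism. For the Lipschitz bound on $(\pi|_{\J(u)})^{-1}$ I would run the classical crossing argument: given nearby $v_1,v_2\in\J(u)$, shortcut the calibrated orbits through $v_1,v_2$ by a direct segment between $\pi v_1$ and $\pi v_2$, apply domination of $u$ along this segment, and exploit strict fibrewise convexity of $L_\eta$ to convert the base-distance gain into a quadratic penalty on the velocity mismatch, yielding $\|v_1-v_2\|\le C\|\pi v_1-\pi v_2\|$. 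The constant $C$ depends only on the uniform velocity bound (set by $\al(\eta)$) and on a Hessian lower bound of $L$ on that range; for uniformity over compact $K\subset H^1(M,\R)$, $\al(\eta)$ is continuous (Proposition \ref{mather dual}) and the Hessian bound depends continuously on $\eta$, so both inputs can be chosen uniform on $K$.
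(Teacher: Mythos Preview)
The paper does not supply a proof of this theorem at all: it is stated with a citation to Sorrentino's lecture notes (items 6.20 and 6.21) and used as a black box throughout. So there is no ``paper's own proof'' to compare against; your proposal is a self-contained sketch of the standard arguments from weak KAM theory, and as such it is broadly correct and follows the usual route (domination/calibration for the set identities, Mather's crossing lemma for the Lipschitz graph property).

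One small comment on your non-emptiness argument for $\J(u)$: the description ``take curves $c_T:[-T,T]\to M$ that nearly realize the infimum defining $\Phi_{L_\eta}(c_T(-T),c_T(T))$'' is somewhat garbled, since in $\Phi_{L_\eta}$ the time interval is part of the infimum and the endpoints are prescribed, not the other way round. A cleaner route is simply to observe that non-emptiness of $\J(u)$ is an immediate consequence of the inclusion $\M_\eta\subset\J(u)$ you establish afterwards, together with $\M_\eta\neq\emptyset$. Alternatively one can run a Tonelli-plus-diagonal argument on the functional $c\mapsto A_{L_\eta}(c[-T,T])-u(c(T))+u(c(-T))\ge 0$, whose minimizers are calibrated on $[-T,T]$; this is presumably what you had in mind. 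Either way the conclusion stands, and the rest of your outline (the Busemann/Peierls construction for $\NN_\eta\subset\bigcup_u\J(u)$, the measure-integration argument for $\M_\eta\subset\bigcap_u\J(u)$, and the convexity-based crossing estimate with uniformity in $\eta$ via continuity of $\al$) is the standard proof one finds in Fathi's book or Sorrentino's notes.
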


\subsection{Finsler metrics}

In this paper we will concentrate on Finsler metrics, motivated by the following.

\begin{prop}[cf. \cite{contreras1}]\label{maupertuis}
Let $L$ be a Tonelli Lagrangian on a closed manifold $M$ and $k>c_0(L)$. Then on the energy level $\E_k$, the Euler-Lagrange flow of $L$ is a reparametrisation of a geodesic flow of some Finsler metric on $M$.
\end{prop}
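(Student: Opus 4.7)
The plan is to realize $\phi^t|_{\E_k}$ as a reparametrized geodesic flow by moving to the Hamiltonian side and exploiting the fact that, for $k>c_0(L)$, the energy level bounds a fiberwise strictly convex body containing the zero section. Pass to $T^*M$ via the Legendre transform $\LL_L:TM\to T^*M$ and let $H(x,p)=\sup_v\bigl(p(v)-L(x,v)\bigr)$ be the associated Tonelli Hamiltonian, so that the Euler–Lagrange flow of $L$ is conjugated to the Hamiltonian flow $\Psi^t$ of $H$ and $\LL_L(\E_k)=\{H=k\}$.

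The key technical step is to show that, for $k>c_0(L)$, the fiberwise sublevel set
\[
K^*_x \;=\; \{\, p\in T^*_xM \,:\, H(x,p)\leq k\,\}
\]
is a strictly convex body containing $0$ in its interior, smoothly varying in $x$. Strict convexity and smoothness of $\partial K^*_x$ come from the Tonelli property of $H$ and from the fact that $dH$ only vanishes at the unique fiberwise minimum of $H(x,\cdot)$, which equals $-L(x,0)$. The hypothesis $k>c_0(L)\geq \max_x(-L(x,0))$ — the second inequality obtained by plugging constant loops into the definition of $c(L)$ — then forces $0\in\Int K^*_x$ for every $x$.

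Once this is established, define the Finsler co-metric as the Minkowski gauge
\[
F^*(x,p) \;=\; \inf\{\, t>0 : p/t\in K^*_x\,\},
\]
so that $F^*$ is positively $1$-homogeneous, fiberwise strictly convex, smooth off the zero section, and satisfies $\{F^*=1\}=\{H=k\}$. Let $F:TM\to[0,\infty)$ be its Legendre dual; this is a Finsler metric in the sense of the paper. Now on the common hypersurface $\Sigma=\{H=k\}=\{F^*=1\}\subset T^*M$ the symplectic-orthogonal characteristic line is intrinsic, so the Hamiltonian vector fields $X_H$ and $X_{(F^*)^2/2}$ must be everywhere positive multiples of one another; equivalently, $X_H=\lambda\,X_{(F^*)^2/2}$ for a smooth positive function $\lambda$ on $\Sigma$. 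Hence $\Psi^t|_\Sigma$ is a smooth time-reparametrization of the co-geodesic flow of $F$.

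To finish, transfer this equivalence back to $TM$ via $\LL_L$ and $\LL_{F^2/2}$: both Legendre transforms conjugate the relevant Euler–Lagrange flows with their Hamiltonian counterparts on $T^*M$, so the reparametrization on $\Sigma$ descends to the statement that $\phi^t|_{\E_k}$ is a reparametrization of the geodesic flow of $F$ on $\{F=1\}\subset TM$. The main obstacle is Step 2: ensuring that $K^*_x$ really is a convex body containing the origin interiorly for \emph{every} $x$ with uniform smoothness estimates; everything afterwards is a formal consequence of Legendre duality and the intrinsic nature of the characteristic foliation on a regular energy hypersurface.
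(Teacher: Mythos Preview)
The paper does not actually give a proof of this proposition; it simply refers to \cite{contreras1}. So there is nothing to compare against except the correctness of your sketch on its own terms.

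Your overall architecture (pass to $T^*M$, take the Minkowski gauge of the fiberwise sublevel set, use that the characteristic foliation on a regular hypersurface is intrinsic) is the standard Maupertuis argument and is sound. The genuine gap is in Step~2, precisely where you flag the ``main obstacle'' but then resolve it incorrectly. You argue that $k>c_0(L)\ge \max_x(-L(x,0))=\max_x\min_p H(x,p)$ forces $0\in\Int K^*_x$. It does not: that inequality only says that $k$ lies above the fiberwise minimum of $H(x,\cdot)$, so $K^*_x$ has nonempty interior; it says nothing about whether the particular point $0$ lies inside. What you need is $H(x,0)<k$, and in general $H(x,0)=-\min_v L(x,v)$ can be strictly larger than $-L(x,0)$. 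For a concrete failure take $L(x,v)=\tfrac12|v|^2-\langle df_x,v\rangle$ on $\T^2$ with $f$ non-constant: here $c_0(L)=0$ but $H(x,0)=\tfrac12|df_x|^2>0$ somewhere, so for small $k>0$ the origin is \emph{not} in $K^*_x$.

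The missing idea is exactly what makes $c_0(L)$ (rather than some other energy threshold) the right hypothesis: $c_0(L)=\min_{[\eta]\in H^1(M,\R)}\al([\eta])$, so for $k>c_0(L)$ there is a closed $1$-form $\eta$ with $c(L-\eta)<k$, and then (via the subsolution characterization of the critical value, as in \cite{contreras1}) a closed $1$-form $\theta$ with $H(x,\theta_x)<k$ for every $x$. Shifting by $\theta$ is a fiberwise translation and a symplectomorphism of $T^*M$, and in the shifted picture the origin \emph{does} lie in the interior of each $K^*_x$. After this correction your Minkowski-gauge and characteristic-foliation steps go through verbatim.
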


\begin{bemerk}
For $M=\T$, not only Tonelli Lagrangian systems can be described by Finsler geodesic flows, but also the Euler-Lagrange flows arising from non-autonomous Tonelli Lagrangians $L:S^1\times TS^1 \cong \T\times\R\to\R$. One can show that the Finsler metric $F:T\T\to\R$ defined by $F(x,v_1,v_2)=v_1 \cdot L(x,v_2/v_1)$ (where we have to assume that $L(x,r)=F_0(x,1,r)$ for large $|r|$ and some given Finsler metric $F_0$) has the orbits $x(t) = (t,\theta(t))$ of $L$ as reparametrised geodesics. For details we refer to \cite{schroeder}. In particular, using a result of Moser \cite{moser1}, one can study monotone twist maps in the setting of Finsler geodesic flows.
\end{bemerk}

Let $F$ be a Finsler metric on a manifold $M$. As usual, a curve $c\in C^1([a,b],M)$ is said to have arc-length, if $F(\dot c)\equiv 1$. We write
\begin{align*}
& l_F : C^{ac}([a,b],M)\to\R, \quad l_F(c)=l_F(c[a,b])=\int_a^b F(\dot c)dt, \\
& d_F(x,y) = \inf\{ l_F(c) ~:~ c\in C^{ac}([0,1],M), c(0)=x,c(1)=y \}
\end{align*}
for the Finsler length and distance. By homogenity, we have $l_F(c)=l_F(c\circ h)$ for orientation-preserving reparametrisations $h$. Note that the energy of a Finsler Lagrangian $L_F$ is again $E=L_F$. Basics about Finsler metrics and their geodesic flows can be found in \cite{shen}.

A geodesic segment $c:[a,b]\to M$ is said to be minimal, if $l_F( c[a,b]) = d_F( c(a), c(b))$. $c:\R\to M$ is minimal, if each $c|_{[a,b]}$ is minimal. There are two important properties of minimals, which we will use frequently without further notice: \begin{itemize}
\item If $c_i:[0,a_i]\to M, i=0,1$ are arc-length minimals with $c_0(0)=c_1(0)$ and $\dot c_0(0)\neq \dot c_1(0)$, then $c_1(0,a_1]$ is disjoint from $c_0[0,a_0)$.

\item If $c_0,c_1:[0,\infty)\to M$ are arc-length minimals with $c_1(0)=c_0(a)$ for some $a>0$ and $\dot c_0(0)\neq \dot c_1(a)$, then there is some $\e>0$, s.th.
\[ \inf \{ d(c_0(s),c_1(t)) : t\in [a+1,\infty), s\in c_1[1,\infty) \} \geq \e. \]
\end{itemize}
The proofs are the same as in Riemannian geometry, cf. theorems 3 and 6 in \cite{morse}. In the Finsler case orientations of intersecting minimizers matter. We say that two curves $c_0,c_1$ \emph{intersect successively}, if there are times $a_0,a_1,b_0,b_1$, s.th.
\[ c_0(a_0)=c_1(a_1), \quad c_0(b_0)=c_1(b_1), \quad a_0<b_0, \quad a_1<b_1.\]

$F$ and the norm $\norm$ of the background metric are equivalent due to the compactness of $M$. The following technical estimates will be of frequent use, cf. 6.2.1 in \cite{shen}.

\begin{lemma}\label{c_F}
There is a constant $c_F>0$, s.th. the following hold.\begin{itemize}
\item $\frac{1}{c_F} \|v\| \leq F(v) \leq c_F \|v\|$ for all $v\in TM$.
\item $\frac{1}{c_F} d(x,y) \leq d_F(x,y) \leq c_F d(x,y)$ for all $x,y\in M$.
\item $\frac{1}{c_F^2} d_F(y,x) \leq d_F(x,y) \leq c_F^2 d_F(y,x)$ for all $x,y\in M$.
\end{itemize}
\end{lemma}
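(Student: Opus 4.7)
The plan is to reduce everything to a compactness argument on the unit sphere bundle of the Riemannian background metric, and then integrate along curves to pass from fibrewise comparison to comparison of distances.

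First I would establish the fibrewise estimate. The sphere bundle $S_gM = \{v\in TM : \|v\|=1\}$ is compact since $M$ is closed. The function $F$ is continuous on $TM$ (smooth outside $0_M$, and $F(0)=0$ extends continuously by positive homogeneity) and, because $F^2$ is strictly convex with $F^2(0)=0$, we have $F(v)>0$ for $v\neq 0$. Hence $F$ attains a positive minimum $a>0$ and a finite maximum $b>0$ on $S_gM$, giving $a\|v\| \leq F(v)\leq b\|v\|$ on $S_gM$. Positive homogeneity of $F$ extends this to all of $TM$, so picking any $c_F\geq \max(b,1/a)$ yields the first bullet.

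Second, I would transfer the fibrewise inequality to lengths and distances. For any absolutely continuous curve $c:[0,1]\to M$, integrating the first bullet gives
\[ \tfrac{1}{c_F}\, l_g(c) \;\leq\; l_F(c) \;=\; \int_0^1 F(\dot c)\, dt \;\leq\; c_F\, l_g(c), \]
where $l_g$ denotes the Riemannian length. Taking the infimum over absolutely continuous curves connecting $x$ to $y$ (which is the same class for both length functionals) yields the second bullet. Finally, the third bullet follows by applying the second bullet twice and using symmetry of the Riemannian distance $d$:
\[ d_F(x,y) \;\leq\; c_F\, d(x,y) \;=\; c_F\, d(y,x) \;\leq\; c_F^2\, d_F(y,x), \]
and analogously the lower bound, after enlarging $c_F$ if necessary so that the same constant works for all three bullets simultaneously.

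There is no real obstacle here; the only subtle point is justifying continuity of $F$ at the zero section (where $F$ is merely assumed smooth off $0_M$), which is immediate from positive homogeneity together with the fibrewise continuity of $F$ on $TM\setminus 0_M$ and the bound $F(v)\leq b\|v\|$ on $S_gM$ extended homogeneously. Everything else is a direct compactness argument and integration along curves.
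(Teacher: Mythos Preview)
Your argument is correct and is exactly the standard compactness-plus-homogeneity proof; the paper does not actually give a proof of this lemma but simply cites 6.2.1 in \cite{shen}, where the same argument appears. One minor remark: no enlargement of $c_F$ is needed at the end, since once the first bullet holds with constant $c_F$, the second follows with the same $c_F$ and the third with $c_F^2$ automatically.
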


Using the homogenity of $F$, one easily shows the following.

\begin{lemma}\label{alpha hom}
Let $\al,\beta$ be Mather's functions associated to $L_F=\frac{1}{2}F^2$, $\eta\in H^1(M,\R),h\in H_1(M,\R)$ and $s > 0$. Then
\[ \al(s\eta) = s^2\cdot \al(\eta), \quad \beta(sh) = s^2\cdot \beta(h), \quad \eta\in \partial \beta(h) \iff s\eta\in \partial \beta(sh). \]
\end{lemma}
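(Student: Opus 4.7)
The plan is to exploit the scaling map $\Psi_s:TM\to TM$, $\Psi_s(v)=sv$, together with the homogeneity identity $L_F\circ\Psi_s = \tfrac12 F(sv)^2 = s^2 L_F$. First I would establish the conjugation of Euler-Lagrange flows: for $v\in TM$ with $F(v)=r$, the orbit $\phi^\tau(v)=\dot\gamma_v(\tau)$ parametrizes an $F$-geodesic at speed $r$; since $\tilde\gamma(\tau):=\gamma_v(s\tau)$ is the same unparametrized geodesic traversed at speed $sr$, it is the EL-orbit issuing from $sv$. This gives
\[ \phi^\tau\circ\Psi_s = \Psi_s\circ\phi^{s\tau} \quad \text{on } TM. \]
Consequently $(\Psi_s)_*$ sends $\phi^t$-invariant probability measures to $\phi^t$-invariant probability measures, because $(\phi^\tau)_*(\Psi_s)_*\mu = (\Psi_s)_*(\phi^{s\tau})_*\mu = (\Psi_s)_*\mu$; it is a bijection of $\MM(L_F)$ with inverse $(\Psi_{1/s})_*$.

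Next I would compute how action and rotation vector transform under $(\Psi_s)_*$. By fibrewise linearity of closed $1$-forms $\xi$, $\xi\circ\Psi_s = s\xi$, so
\[ \la \rho((\Psi_s)_*\mu), [\xi]\ra = \int s\xi\, d\mu = s\la \rho(\mu),[\xi]\ra, \]
hence $\rho((\Psi_s)_*\mu)=s\rho(\mu)$, and by the homogeneity of $L_F$ one gets $A_{L_F}((\Psi_s)_*\mu)=s^2 A_{L_F}(\mu)$. If $\mu\in\MM^h$ realizes $\beta(h)$, then $(\Psi_s)_*\mu$ has rotation vector $sh$ and action $s^2\beta(h)$, giving $\beta(sh)\le s^2\beta(h)$; applying the same argument with $s$ replaced by $1/s$ to a minimizer of rotation vector $sh$ gives the reverse inequality, hence $\beta(sh)=s^2\beta(h)$.

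Finally I would derive the remaining two statements from this formula. Since $\alpha$ is the convex dual of $\beta$ (proposition \ref{mather dual}),
\[ \al(s\eta)=\sup_h\bigl(\la s\eta,h\ra-\beta(h)\bigr)=\sup_{h'}\bigl(s^2\la\eta,h'\ra-s^2\beta(h')\bigr)=s^2\al(\eta) \]
after the substitution $h=sh'$. The subgradient statement follows by direct substitution: the defining inequality $\beta(h')\ge\beta(h)+\la\eta,h'-h\ra$ for all $h'\in H_1(M,\R)$ transforms, under $h'=sh''$ and division by $s^2$, into $\beta(h'')\ge \beta(h)+\la\eta,h''-h\ra$ (using $\beta(sh'')=s^2\beta(h'')$ and $\beta(sh)=s^2\beta(h)$), which is precisely $s\eta\in\partial\beta(sh)\iff \eta\in\partial\beta(h)$; bijectivity of $h\mapsto sh$ on $H_1$ (valid since $s>0$) ensures that no test homologies are lost.

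No serious obstacle is expected: the only point requiring care is the time rescaling $\phi^\tau\circ\Psi_s=\Psi_s\circ\phi^{s\tau}$, which nonetheless does not affect the pushforward relation on invariant measures, so the whole argument reduces to bookkeeping on how $L_F$, closed $1$-forms and convex duality respond to the fibrewise dilation $\Psi_s$.
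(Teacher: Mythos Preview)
Your argument is correct and complete; the paper itself does not supply a proof, saying only ``using the homogenity of $F$, one easily shows the following,'' and your approach via the fibrewise dilation $\Psi_s$, the pushforward on $\MM(L_F)$, and convex duality is the natural way to make this precise. One small expository slip: in the subgradient step your displayed ``transformed'' inequality is literally the same as the starting one, so the sentence does not quite parse as written. The intended computation is to multiply $\beta(h')\ge\beta(h)+\la\eta,h'-h\ra$ by $s^2$ and use homogeneity to rewrite it as $\beta(sh')\ge\beta(sh)+\la s\eta,sh'-sh\ra$; since $h'\mapsto sh'$ is a bijection of $H_1(M,\R)$ for $s>0$, this is exactly the condition $s\eta\in\partial\beta(sh)$. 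With that wording fixed, the proof is fine.
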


We want to concentrate on the unit tangent bundle $S M=\{L_F=1/2\}$ and motivated by $\NN_{\eta}\subset\{ L_F=\al(\eta)\}$ we set
\[ H_F := \{\eta \in H^1(M,\R) : \al(\eta)=1/2 \} \subset H^1(M,\R). \]
By lemma \ref{alpha hom}, $H_F$ is star-like w.r.t. $0$ in $H^1(M,\R)$ and bounds the convex set $\{\al \leq 1/2\}$. We also want to single out the homologies that can occur as rotation vectors of the Mather sets $\M^h\subset SM$, so we define another star-like set
\[ G_F := \{ h \in H_1(M,\R) : \partial\beta(h) \subset H_F \} \subset H_1(M,\R) \]
(recall that $\al$ is constant on $\partial\beta(h)$), so for each $h\in G_F$ we have the closed, convex set
\[ \F^h := \partial\beta(h) \subset H_F \subset H^1(M,\R). \]

In the case of $M=\T$, $H_F$ is a circle in $H^1(\T,\R)\cong\R^2$ oriented anticlockwise and the sets $\F^h=\partial\beta(h)$ are straight segments, maybe points in $H_F$. We shall denote them by
\[ \F^h = [\eta_-,\eta_+], \]
where $\eta_\pm$ are the endpoints of $\F^h$ and $\eta_-\leq \eta \leq \eta_+$ for all $\eta\in\F^h$ in the cyclic order defined by the orientation of $H_F$. Similarly we use the orientation in $G_F \subset H_1(M,\R)\cong \R^2$ for a cyclic ordering.

The following calculations show that arc length parametrisation is optimal for $A_{L_\eta}$, if $\eta\in H_F$.

\begin{prop} \label{F=1 optimal} Let $F$ be a Finsler metric on $M$. \begin{itemize}
\item[(i)] For $c\in C^{ac}([0,T],M)$ there is $c_0\in C^{ac}([0,l_F(c)],M)$ with 
\[ F(\dot c_0)=1 \text{ a.e.}, \quad c_0(0)=c(0), \quad c_0(l_F(c)) = c(T), \quad c_0[0,l_F(c)]=c[0,T]. \]
\item[(ii)] Let $k>0$, $\eta$ a closed 1-form on $M$, $c\in C^{ac}([0,T],M)$ and $c_1:[0,\frac{l_F(c)}{\sqrt{2k}}]\to M$ with $c_1(t) := c_0(t \cdot \sqrt{2k})$, $c_0$ as in (i). Then
\[ A_{L_F-\eta+k}(c_1) \leq A_{L_F-\eta+k}(c). \]
\item[(iii)] If $c\in C^{ac}([a,b],M)$ is minimal w.r.t. $A_{L-\eta+k}$, then $c$ is minimal w.r.t. $l_F$ under all curves homologous to $c$.
\end{itemize}\end{prop}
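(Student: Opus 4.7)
For (i), the plan is the standard arc-length reparametrization. Set $s(t) := \int_0^t F(\dot c(\tau))\,d\tau$, which is absolutely continuous and non-decreasing with $s(0)=0$ and $s(T)=l_F(c)$. The only technical subtlety is that $s$ may be constant on intervals where $\dot c=0$ a.e., but $c$ itself is constant on such intervals, so the right-continuous pseudo-inverse $\sigma(u) := \inf\{t:s(t)\geq u\}$ yields a well-defined absolutely continuous curve $c_0 := c\circ\sigma$ on $[0,l_F(c)]$ satisfying $F(\dot c_0)=1$ a.e.\ (by the chain rule), together with the same image and endpoints as $c$. This measure-theoretic inversion is the only place the argument is not a one-line computation.

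For (ii), the plan is a direct computation paired with Young's inequality. From $c_1(t)=c_0(t\sqrt{2k})$ one obtains $\dot c_1(t)=\sqrt{2k}\,\dot c_0(t\sqrt{2k})$, so by homogeneity $F(\dot c_1)\equiv\sqrt{2k}$ and $L_F(\dot c_1)\equiv k$ a.e. Since $c_1$ and $c$ have the same image and orientation, $\int_{c_1}\eta=\int_c\eta$, and direct integration yields
\[ A_{L_F-\eta+k}(c_1) \;=\; 2k\cdot\frac{l_F(c)}{\sqrt{2k}} - \int_c\eta \;=\; \sqrt{2k}\,l_F(c) - \int_c\eta. \]
On the other hand, Young's inequality $\tfrac{1}{2}a^2+k\geq\sqrt{2k}\,a$ (equivalent to $(a-\sqrt{2k})^2\geq 0$) applied pointwise to $a:=F(\dot c)$ gives
\[ A_{L_F-\eta+k}(c) \;=\; \int_0^T\!\bigl(\tfrac{1}{2}F(\dot c)^2+k\bigr)\,dt - \int_c\eta \;\geq\; \sqrt{2k}\,l_F(c) - \int_c\eta \;=\; A_{L_F-\eta+k}(c_1). \]

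For (iii), the plan is to combine (ii) with the minimality hypothesis. Suppose $c$ is $A_{L_F-\eta+k}$-minimal on $[a,b]$ and $c'$ is any curve homologous to $c$; then $c'$ has the same endpoints as $c$, and since $\eta$ is closed, $\int_{c'}\eta=\int_c\eta$. Applying the formula derived in (ii) to both $c$ and $c'$ produces reparametrizations $c_1,c'_1$ with the same endpoints as $c$, satisfying
\[ A_{L_F-\eta+k}(c_1)=\sqrt{2k}\,l_F(c)-\int_c\eta \quad\text{and}\quad A_{L_F-\eta+k}(c'_1)=\sqrt{2k}\,l_F(c')-\int_c\eta. \]
The inequality in (ii) gives $A_{L_F-\eta+k}(c_1)\leq A_{L_F-\eta+k}(c)$, while the minimality of $c$ against the competitor $c'_1$ gives $A_{L_F-\eta+k}(c)\leq A_{L_F-\eta+k}(c'_1)$. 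Chaining these inequalities yields $\sqrt{2k}\,l_F(c)\leq \sqrt{2k}\,l_F(c')$, hence $l_F(c)\leq l_F(c')$, as required.
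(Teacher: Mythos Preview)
Your proof is correct and, for parts (ii) and (iii), takes a somewhat different route from the paper.

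For (i), both you and the paper perform the standard arc-length reparametrization; the paper handles the intervals where $\dot c=0$ via an explicit equivalence relation on $[0,T]$, while you use the right-continuous pseudo-inverse, which amounts to the same thing.

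For (ii), the paper proceeds in two stages: first Cauchy--Schwarz in $L^2$ shows that constant-speed reparametrizations minimize $A_{L_F}$ among curves on a fixed interval, and then a one-variable calculus optimization over the scaling $c_\lambda(t)=c(\lambda t)$ picks out the optimal speed $\sqrt{2k}$. Your argument collapses both steps into a single pointwise application of Young's inequality $\tfrac{1}{2}a^2+k\geq\sqrt{2k}\,a$, which is more direct and yields the exact value $A_{L_F-\eta+k}(c_1)=\sqrt{2k}\,l_F(c)-\int_c\eta$ immediately. The paper's approach has the minor advantage of isolating the Cauchy--Schwarz equality case, which is reused in its proof of (iii).

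For (iii), the paper takes a competitor $\gamma$ on the \emph{same} interval $[a,b]$, reparametrizes it to constant speed there, and compares the resulting actions $\frac{l_F(\cdot)^2}{2(b-a)}+k(b-a)$ directly. Your argument instead compares $c$ against the free-time competitor $c_1'$ living on $[0,l_F(c')/\sqrt{2k}]$. This means you are implicitly reading ``minimal w.r.t.\ $A_{L_F-\eta+k}$'' as free-time minimality (i.e.\ over all curves with the same endpoints, any domain), whereas the paper's argument works already under the weaker fixed-time hypothesis. In the paper's applications (semistatic curves, $J$-minimizers) the relevant curves are in fact free-time minimal, so your version suffices; but be aware that if one ever needed (iii) for a curve known only to be fixed-time minimal, the paper's formulation would be required.
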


\begin{proof}
(i) Define an equivalence relation $\sim$ on $[0,T]$ by $s\sim t$ iff $l_F(c[0,s])=l_F(c[0,t])$. Identifying $s\sim t$ we obtain a new interval $[0,T_0]\cong [0,T]/_\sim$ and a new curve $\tilde c:[0,T_0]\to M$ defined by $\tilde c([t]):= c(t)$, where $[t]=\{s \in [0,T]: s\sim t\}$. $\tilde c$ is continuous, since $c(t)=c(s)$ for all $s\sim t$. Define $l:[0,T_0]\to [0,l_F(c)]$ by $l([t]):= l_F(c[0,t])$. Then $l$ is continuous, strictly increasing and has a strictly increasing inverse $h:=l^{-1}$. As such, $h$ is differentiable a.e. and for $c_0 := \tilde c \circ h$ we obtain $F(\dot c_0)=h'\cdot F(\dot{\tilde c}\circ h) = \frac{1}{l' \circ h} \cdot F(\dot{\tilde c}\circ h) = 1$ a.e.. It remains to show that $c_0$ is absolutely continuous, but if $\sum_i b_i-a_i \leq \e / c_F$ for $\e>0$, then
\[ \sum_i d(c_0(a_i),c_0(b_i)) \leq c_F \sum_i l_F(c_0[a_i,b_i]) = c_F \sum_i b_i-a_i \leq \e. \]

(ii) From the $L^2$-Cauchy-Schwarz inequality we get
\[ l_F(c[a,b])^2 = \la 1, F(\dot c) \ra_{L^2}^2 \leq \|1\|_{L^2}^2\cdot \|F(\dot c)\|_{L^2}^2 = 2 (b-a) A_{L_F}(c[a,b]) \]
with equality iff $F(\dot c)=\const$ a.e.. By (i) we can reparametrise $c$ to have constant speed (consider the new curve $c_0(t \frac{l_F(c)}{T})$ defined on $[0,T]$, $c_0$ as in (i)) and by Cauchy-Schwarz, the action does not increase. Hence w.l.o.g. $F(\dot c)=\const$ a.e.. For $\lam>0$ consider the curves $c_\lam:[0,T/ \lam]\to M$ with $c_\lam (t)=c(\lam T)$. Then with $E=L_F(\dot c)=\const$ we have
\begin{align*}
A(\lam) := A_{L_F-\eta+k}(c_\lam) = TE\lam - \int_c \eta  + Tk/\lam , \quad A'(\lam) = T\left (E-k/ \lam^2 \right ).
\end{align*}
With $A(\lam)\to \infty$ for $\lam\to 0,\infty$ we find a minimum of $A$ in $(0,\infty)$ characterised by the unique $\lam_0$ with $A'(\lam_0)=0$, i.e. $\lam_0=\sqrt{k/E}$. Thus we find the optimal speed for $c$ by
\[ F(\dot c_{\lam_0}) = \lam_0 \cdot F(\dot c) = \sqrt{2k/F^2(\dot c)} \cdot F(\dot c) = \sqrt{2k}.\]

(iii) Suppose $\gamma:[a,b]\to M$ with $\gamma(a)= c(a), ~ \gamma(b)= c(b)$ is homologous to $c$, $F(\dot \gamma)=\const$ a.e. and $l_F(\gamma) < l_F(c)$. Then we find by the equality case in Cauchy-Schwarz that
\[ A_{L_F+k}(\gamma) = \frac{l_F(\gamma)^2}{2(b-a)} + k(b-a) < \frac{l_F(c)^2}{2(b-a)} + k(b-a) = A_{L_F+k}(c). \]
But $\int_c \eta - \int_\gamma\eta=0$ by $c,\gamma$ being homologous and $\eta$ being closed, contradicting the $A_{L_F-\eta+k}$-minimality of $c$.
\end{proof}


\section{\Mane sets for Finsler metrics on the 2-torus, proof of theorem II}\label{section mane sets}

In this section we study the structure of the \Mane sets $\NN_\eta$ in the case of a Finsler Lagrangian on $\T=\R^2/\Z^2$. From now on we fix a Finsler metric $F$ on $\T$. We will write $p:\R^2\to\T$ for the covering map and $\tau_z=x\mapsto  x+z$ for the deck transformations with $z\in\Z^2$. Our background metric will be the euclidean scalar product $\skp$. We identify $\Z^2$ and $\pi_1(\T)$ with the free homotopy classes of loops in $\T$ and write $c \in z\in \pi_1(\T)$ for closed loops $c$. We will frequently denote objects in $\T$ and lifts to $\R^2$ by the same letters and assume that geodesics are parametrised by $F$-arc-length. When writing $\eta$ we refer to the constant 1-form $\la \eta,.\ra$ on $\T$ with $\eta\in\R^2$. Such $\eta$ also refer the the cohomology classes $[\eta]\in H^1(\T,\R)\cong\R^2$ and we will assume that $\eta\in H_F=\{\al=1/2\}$ if not stated otherwise. Recall that in this case $H_F, G_F$ are oriented circles in $H^1(\T,\R),H_1(\T,\R)\cong \R^2$, respectively.

\subsection{Semistatic curves and rotation vectors}

We use two results due to Hedlund \cite{hedlund} proven in the Riemannian case, just noting that his arguments apply directly to the Finsler case.

\begin{lemma} [lemma 5.1 in \cite{hedlund}] \label{hedlund lemma}
Let $c:\R\to \T$ be a continuous periodic curve with homotopy class $[c]=kz\in\pi_1(\T)$ for some $z\neq 0$ and $k\geq 2$. Then there are $k$ periodic curves $\sig_1,...,\sig_k\in z\in \pi_1(\T)$, s.th. $\sig_1$ is a part of $c=c_0$ and $\sig_{i+1}$ is a part of $c_i$, where $c_i$ is obtained from $c_{i-1}$ by cutting out $\sig_i$, s.th. $c_i\in (k-i)z$.
\end{lemma}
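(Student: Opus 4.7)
The plan is to reduce, by induction on $k$, to the following extraction step: given a continuous closed curve $c$ of class $kz \in \pi_1(\T)$ with $k \ge 2$, produce $0 \le a < b \le T$ such that $\sigma := c|_{[a, b]}$ is a closed loop of class $z$. The curve $c_1 := c|_{[0, a]} * c|_{[b, T]}$ then has class $(k-1)z$, and iterating this extraction produces the $k$ subarcs $\sigma_1, \ldots, \sigma_k$ claimed in the lemma (when $k-i=1$ one can take $\sigma_k = c_{k-1}$ itself). Choosing affine coordinates on $\R^2$ so that $z = (0, 1)$ and lifting $c$ to $\tilde c = (x, y) : \R \to \R^2$, one has $x$ that is $T$-periodic and $y$ with $y(t + T) = y(t) + k$; the extraction is then equivalent to finding $a < b$ with $y(b) - y(a) = 1$ and $x(a) = x(b)$.

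My plan for the extraction is an intermediate-value argument. For each $s$ define
\[ \tau(s) := \inf\{t > s : y(t) = y(s) + 1\}, \qquad \phi(s) := x(\tau(s)) - x(s). \]
Continuity of $y$ together with $y \to +\infty$ makes $\tau(s)$ finite and in $(s, s + T]$, while the relation $y(t + T) = y(t) + k$ yields $\tau(s + T) = \tau(s) + T$; hence $\phi$ is $T$-periodic. The crucial observation is that $\phi$ cannot be strictly positive on $[0, T]$: writing $\tau^{j}$ for the $j$-fold iterate one has $y(\tau^{j}(s)) = y(s) + j$, and the same hitting-time argument inductively gives $\tau^{k}(s) \in (s, s + T]$ since $y(s + T) = y(s) + k$. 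Picking $s^* \in [0, T]$ with $x(s^*) = \max_{[0,T]} x$, iterating $\phi > 0$ along the orbit $s^*, \tau(s^*), \ldots, \tau^k(s^*)$ would give
\[ x(\tau^k(s^*)) > x(\tau^{k-1}(s^*)) > \cdots > x(s^*) = \max x, \]
contradicting $T$-periodicity of $x$, which forces $x(\tau^k(s^*)) \le \max x$. The symmetric argument rules out $\phi < 0$ everywhere, and a zero $s_0$ of $\phi$ supplies $(a, b) := (s_0, \tau(s_0))$.

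The principal obstacle is justifying that $\phi$ actually attains $0$ rather than merely takes both signs: if $y$ has plateaus, $\tau$ may jump and $\phi$ need not be continuous, so the IVT does not apply verbatim. For smooth $c$---the case of interest, since the lemma is applied to minimal Finsler geodesics---the set of regular values of $y$ has full measure (by Sard), and away from the corresponding countable set of starting points $\tau$ and $\phi$ are continuous; the IVT then gives a zero. For a fully general continuous $c$ one either approximates uniformly by smooth curves in the same free homotopy class and passes to the limit, or works in the cylinder $C = \R^2/\langle z \rangle$: the projected curve $\bar c$ has winding number $k \ge 2$, so cannot be a simple closed curve in $C$, and an innermost self-intersection (minimizing the parameter gap) produces a simple subloop whose winding is $\pm 1$ or $0$. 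The winding-$(+1)$ case is the desired conclusion; winding $0$ gives a contractible loop which one excises from $c$ before iterating, while winding $-1$ is handled by a finite descent on the number of self-intersections via the complementary arc.
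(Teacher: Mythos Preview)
The paper does not give its own proof of this lemma: it simply cites Hedlund's original article and notes that the argument carries over to the Finsler setting unchanged. So there is no in-paper argument to compare against; what follows is an assessment of your proposal on its own terms.

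Your main line---the first-hitting time $\tau$, the gap function $\phi$, and the observation that $\phi$ cannot keep a fixed sign by evaluating at a maximiser and a minimiser of $x$---is natural and correct (the iteration out to $\tau^{k}$ is in fact unnecessary: already $\phi(s^{*})=x(\tau(s^{*}))-\max x\le 0$ suffices). The genuine problem is exactly the one you flag: $\tau$ is only lower semi-continuous, $\phi$ can jump, and the intermediate-value theorem does not apply. None of your three fixes closes this gap as written. The Sard claim is not correct: even for $C^{\infty}$ $y$ the set of $s$ with $y(s)+1$ a critical value need not be countable, and continuity of $\phi$ on a merely dense set does not yield the IVT. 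Smooth approximation is delicate because the subloop extracted from the approximant may degenerate in the limit. The cylinder argument is essentially Hedlund's route and is the right idea, but your one-line treatment of the winding-$(-1)$ case (``finite descent via the complementary arc'') needs an invariant that actually decreases: the winding number goes \emph{up} from $k$ to $k+1$ when you pass to the complementary arc, so induction on $k$ fails.

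What does decrease is the number of self-intersections of the projected curve in the cylinder, since the complementary arc omits at least the intersection pair $(a,b)$ you cut at. This makes the descent terminate for curves with finitely many self-intersections, and the base cases (zero or one self-intersection) force a winding-$(+1)$ subarc once $k\ge 2$: with a single self-intersection both complementary simple subloops have winding in $\{0,\pm 1\}$ summing to $k$, which for $k=2$ forces both to be $+1$. That is enough for the two places in the paper where the lemma is actually invoked (Steps~3 and~4 in the proof of the multibump theorem), since there the curve $\gamma_0$ is a minimiser concatenated with a single Euclidean segment, hence piecewise $C^{1}$ with finitely many self-intersections in the cylinder. If you want a statement valid for arbitrary continuous curves, the $\phi$-argument can be rescued by a connectedness argument on the level set $\{(s,t):s<t\le s+T,\ y(t)-y(s)=1\}$ rather than by choosing a single-valued section $\tau$, but that requires more work than you have indicated.
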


\begin{thm}[Hedlund, Mather] \label{hedlund}
Let $c\in z\in\pi_1(\T)$ be minimal for $l_F$ in the homotopy class $z\neq 0$ and let $T>0$ be the period of $c$. Then \begin{itemize}
\item $c$ is prime-periodic and $c$ is also minimal in $kz$ for all $k\geq 1$,
\item any lift $\tilde c:\R\to\R^2$ is minimal for $l_F$,
\item there is some $\eta\in H_F$ with $\dot c\in \M_\eta$.
\end{itemize}
\end{thm}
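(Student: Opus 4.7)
For the minimality-in-$kz$ part of (i), I would use Hedlund's decomposition Lemma~\ref{hedlund lemma} directly. Given any loop $\gamma\in kz$, it decomposes into $k$ loops $\sigma_1,\dots,\sigma_k\in z$, each of which has length $\geq l_F(c)=T$ by minimality of $c$ in the class $z$; summing yields $l_F(\gamma)\geq kT=l_F(c^k)$, so $c$ iterated $k$ times is minimal in $kz$. For prime-periodicity, suppose $c$ is not prime and factors through a shorter loop $c_0$ with $[c_0]=w$, $nw=z$; lifting to $\R^2$ one produces two distinct arc-length minimals emanating from the same point $\tilde c(0)$ that meet again at the common point $\tilde c(T_0)$, which is ruled out by the oriented ``successive intersection'' property stated in Section~\ref{section def} (the non-reversible analogue of the classical corner-cutting at intersections of minimizing geodesics).

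For (ii), argue by contradiction. Suppose there are $a<b$ and a curve $\gamma:[0,s]\to\R^2$ from $\tilde c(a)$ to $\tilde c(b)$ with $l_F(\gamma)<b-a=l_F(\tilde c[a,b])$. Pick any $N\geq 1$ with $NT\geq b-a$ and project the concatenation $\gamma*\tilde c[b,a+NT]$ down to $\T$. Since the endpoints of this lifted curve in $\R^2$ differ by $\tilde c(a+NT)-\tilde c(a)=Nz$, the projected curve is a loop of homotopy class $Nz$ based at $\pi\tilde c(a)$; its length is $l_F(\gamma)+(NT-(b-a))<NT$, contradicting minimality of $c^N$ in $Nz$ just established.

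For (iii) I would associate to $c$ the invariant probability measure $\mu_c=\tfrac1T\int_0^T\delta_{\dot c(t)}\,dt$. A one-line calculation with a constant 1-form $\xi$ gives $\langle\rho(\mu_c),\xi\rangle=\tfrac1T\int_c\xi=\tfrac1T\langle\xi,z\rangle$, so $\rho(\mu_c)=z/T=:h$, and since $F(\dot c)\equiv 1$ we have $A_{L_F}(\mu_c)=\tfrac12$. The key step is to show $\mu_c\in\MM^h$, i.e.\ $\beta(h)=\tfrac12$. For any $\mu\in\MM(L_F)$ with $\rho(\mu)=h$, Cauchy--Schwarz in $L^2(\mu)$ gives
\[
A_{L_F}(\mu)=\tfrac12\!\int F^2\,d\mu\ \geq\ \tfrac12\!\left(\int F\,d\mu\right)^{\!2},
\]
and I would then show $\int F\,d\mu\geq\|h\|_s$, where $\|\cdot\|_s$ denotes the Finsler stable norm on $H_1(\T,\R)$: pass to the ergodic decomposition, apply Birkhoff so that $\mu_{\rm erg}$-a.e.\ orbit has average Finsler speed $\int F\,d\mu_{\rm erg}$ and rotation $\rho(\mu_{\rm erg})$, and use $l_F(\tilde c_v[0,t])\geq d_F(\tilde c_v(0),\tilde c_v(t))$ together with the definition of $\|\cdot\|_s$. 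From (ii), $d_F(\tilde c(0),\tilde c(nT))=nT$ forces $\|z\|_s=T$, hence $\|h\|_s=1$ and $A_{L_F}(\mu)\geq\tfrac12$. So $\mu_c\in\MM^h$, $\dot c(\R)=\supp\mu_c\subset\M^h$, and by Proposition~\ref{mather dual} any $\eta\in\partial\beta(h)$ satisfies $\M^h\subset\M_\eta$; the inclusion $\supp\mu_c\subset\{E=\tfrac12\}$ then forces $\alpha(\eta)=\tfrac12$, i.e.\ $\eta\in H_F$.

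The main obstacle is the stable-norm/ergodic step: one must be careful in handling non-ergodic $\mu$ (via ergodic decomposition and convexity of $\|\cdot\|_s$) and in translating the geometric minimality content of (ii) into the homogeneous identity $\|z/T\|_s=1$. Everything else is bookkeeping with Hedlund's lemma and the fundamental-group identification of homotopy classes of spliced loops.
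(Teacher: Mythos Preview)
The paper does not prove this theorem; it simply cites Hedlund for (i)--(ii) and Proposition~2 of Mather for (iii). Your proposal therefore supplies more than the paper does, and most of it is sound. The use of Lemma~\ref{hedlund lemma} for minimality in $kz$ is exactly the right mechanism, the reduction in (ii) is the standard one, and your stable-norm route to (iii) is a correct and self-contained alternative to invoking Mather: the chain $A_{L_F}(\mu)\geq\tfrac12\bigl(\int F\,d\mu\bigr)^2\geq\tfrac12\|\rho(\mu)\|_s^2$ together with $\|z\|_s=T$ (which follows from (i)--(ii)) gives $\beta(h)=\tfrac12$, and the ergodic-decomposition step you flag does go through via convexity of $\|\cdot\|_s$ and the asymptotic identity $d_F(0,tx)/t\to\|x\|_s$.

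Your prime-periodicity sketch, however, does not work as written. Assuming $c=c_0^n$ produces only \emph{one} arc from $\tilde c(0)$ to $\tilde c(T_0)$, namely $\tilde c|_{[0,T_0]}$ itself, so no ``two distinct minimals meeting again'' appear and no contradiction follows. What is actually required (and what the later uses of ``prime-periodic'' in the paper need) is $\tau_w\tilde c(\R)=\tilde c(\R)$, where $w$ is the primitive element with $z=nw$. Argue as follows: if $\tilde c$ and $\tau_w\tilde c$ intersect at some point then, by $\tau_z$-periodicity of both, they also intersect at the $\tau_z$-translate of that point, giving a successive intersection of two minimals, which the corner-cutting property you cite forbids; hence they are disjoint or equal. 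If disjoint, the curves $\tau_w^k\tilde c$ for $0\le k<n$ are strictly monotone for the order $<$, but $\tau_w^n\tilde c=\tau_z\tilde c=\tilde c$ closes the cycle --- a contradiction. Therefore $\tau_w\tilde c=\tilde c$, which is precisely prime-periodicity.
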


The first two statements are contained in Hedlund's paper \cite{hedlund}, while the last statement is a special case of proposition 2 in Mather's paper \cite{mather}.

\begin{defn}\label{def rotation vector}
Let $c:\R\to \T$ and $\tilde c:\R\to\R^2$ be a lift. Set
\begin{align*}
\delta^\pm(c) &:= \lim_{T\to\pm\infty}\frac{\tilde c(T)}{\|\tilde c(T)\|}  ~ \in S^1, \\
\rho(c) &:= \lim_{T\to \infty}\frac{\tilde c(T)-\tilde c(-T)}{|2T|}, \quad \rho^\pm(c) := \lim_{T\to\pm\infty}\frac{\tilde c(T)}{|T|} \in \R^2 ,
\end{align*}
if the limits exist. We call $\delta^\pm$ \emph{asymptotic directions} and $\rho,\rho^\pm$ \emph{rotation vectors}.
\end{defn}

\begin{rechnung} \label{<eta,rho>=1}
We make a few basic observations. Let $c$ be an $\eta$-semistatic ($\eta\in H_F$). \begin{itemize}
\item There exists a global constant $C$, s.th. $|A_{L_\eta}(c[a,b])|\leq C$ for any $a<b$. This follows from comparison with minimal geodesics between $c(a),c(b)$ in $\T$, giving a global bound for $\Phi_{L_\eta}(x,y)$. In particular, if $c$ is periodic with period $T$, we have $A_{L_\eta}(c[0,T])=0$: $A_{L_\eta}(c)\geq 0$ by definition of \mane's critical value and if $A_{L_\eta}(c)>0$, the action would become unbounded for higher iterates of $c$.

\item Calculating $\lim_{T\to\infty} \frac{1}{T}A_{L_\eta}(c[0,T])=0$, we find by $L_F(\dot c)=\al(\eta)=1/2$, that
\[ \frac{1}{T} \int_0^T \la \eta, \dot c \ra dt \to 1, \quad T\to\infty. \]
In particular, if $\rho(c),\rho^+(c)$ exist, we find by $\int_a^b\la \eta, \dot c\ra dt= \la\eta,\tilde c(b)-\tilde c(a)\ra$, that
\[ \la\eta,\rho(c)\ra=\la\eta,\rho^+(c)\ra=1. \]
This refelects that $\rho$ depends on the parametrisation fixed by $\al(\eta)$.

\item For the lifts $\tilde c$ of $c$ we have $\lim_{t\to \pm\infty}\|\tilde c(t)\|=\infty$ due to minimality. In particular, $\delta^\pm$ do not depend on the chosen lift.
\end{itemize}
\end{rechnung}

\begin{thm}\label{existence rot vector} For any $\eta$-semistatic the asymptotic directions and rotation vectors exist and
\[ \delta^+=-\delta^-, \quad \rho=\rho^+=-\rho^-, \quad \rho = \frac{\delta^+}{\la \eta,\delta^+ \ra}, \quad  \delta^+ = \frac{\rho}{\|\rho\|}. \]
Furthermore all $\eta$-semistatics have the same rotation vector.
\end{thm}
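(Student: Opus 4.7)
The plan is to combine the estimates already noted in Remark~\ref{<eta,rho>=1} with a Morse--Hedlund style bounded-strip theorem for minimals, and then use the no-successive-intersection property of minimals to pin down uniqueness.

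First I would record the consequences of Remark~\ref{<eta,rho>=1} for a single $\eta$-semistatic $c$, parametrised by $F$-arc-length. Lifting to $\tilde c:\R\to\R^2$, one has $\|\tilde c(T)\|\to\infty$ as $|T|\to\infty$, the bound $\|\tilde c(T)-\tilde c(0)\|\leq c_F|T|$ from Lemma~\ref{c_F}, and
\[ \frac{\la \eta,\tilde c(T)-\tilde c(0)\ra}{T}\longrightarrow 1 \quad (T\to\pm\infty). \]
Thus $\tilde c(T)/T$ stays bounded in $\R^2$, every subsequential limit sits on the affine line $\{w\in\R^2:\la\eta,w\ra=1\}$, and every accumulation direction $\delta\in S^1$ of $\tilde c(T)/\|\tilde c(T)\|$ as $T\to+\infty$ satisfies $\la\eta,\delta\ra>0$.

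The geometric core is a bounded-distance-to-a-line estimate: for some $v\in S^1$ (a priori depending on $c$) and some $D=D(F)$,
\[ \sup_{t\in\R}\dist\bigl(\tilde c(t),\,\R v\bigr)\leq D. \]
To produce the strip, fix an accumulation direction $\delta$ and approximate it by rational directions $z_k/\|z_k\|$, $z_k\in\Z^2\setminus\{0\}$. Theorem~\ref{hedlund} yields a prime-periodic arc-length minimal $\gamma_k$ in the free homotopy class $z_k$, whose $\R^2$-lifts are minimal and $z_k$-periodic. Since two minimals cannot intersect successively, an adaptation of the Morse--Hedlund argument (valid in the Finsler case by the remark opening Section~\ref{section mane sets}) traps $\tilde c$ between two consecutive $\Z^2$-translates of a lift of $\gamma_k$; passing to the limit $k\to\infty$ yields the desired strip, with $v=\delta$.

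Given the strip, write $\tilde c(T)=t(T)v+w(T)$ with $\|w(T)\|\leq D$; since $\|\tilde c(T)\|\to\infty$ one has $|t(T)|\to\infty$, and then the average forces $t(T)/T\to 1/\la\eta,v\ra$ as $T\to+\infty$ and the analogous limit at $-\infty$. All four identities $\delta^+=-\delta^-$, $\rho=\rho^+=-\rho^-$, $\rho=\delta^+/\la\eta,\delta^+\ra$, $\delta^+=\rho/\|\rho\|$ drop out by direct computation. For uniqueness across all $\eta$-semistatics, suppose $c_1,c_2$ are $\eta$-semistatic with strip directions $v_1\neq v_2$. The conditions $\la\eta,v_i\ra>0$ imply $v_1\neq -v_2$, so the two strips have different asymptotic angles. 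Choosing $z\in\Z^2$ with $\tilde c_2+z$ placed so that its strip lies on one side of the strip of $\tilde c_1$ for $T\to-\infty$ and on the other side for $T\to+\infty$, the two minimal lifts $\tilde c_1$ and $\tilde c_2+z$ are forced to intersect successively, contradicting minimality.

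The main obstacle is the bounded-strip lemma in the non-reversible Finsler setting: one must carry Hedlund's classical trapping argument through using only the orientation-sensitive no-successive-intersection property for minimals. Once the strip is available, the existence of the rotation vector, the algebraic relations, and the uniqueness step are all essentially rigid consequences.
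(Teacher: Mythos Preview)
Your route through a bounded-strip lemma is genuinely different from the paper's, but it has two real gaps.

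\textbf{The strip step is not proved.} No-successive-intersection only says $\tilde c$ crosses each $\Z^2$-translate of a lift of $\gamma_k$ at most once, so $\tilde c$ moves \emph{monotonically} through the $\gamma_k$-lamination; it is not trapped between consecutive leaves. Indeed, whenever $z_k/\|z_k\|\neq\delta$ (which is the generic case in your approximation), $\tilde c$ will cross infinitely many translates. Even if a trapping held for each $k$, the gap between consecutive translates is of order $\|z_k\|$ and blows up as $z_k/\|z_k\|\to\delta$, so the limit does not produce a strip of finite width. The bounded-strip theorem is true (Hedlund; see Remark~\ref{bem asymptotic directions min rays}), but its proof is not the limiting procedure you sketch. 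The paper sidesteps the strip entirely: existence of $\delta^+$ comes from a direct cone argument (two accumulation directions force $\tilde c$ to oscillate between two cones, which manufactures successive intersections with a well-chosen periodic minimal), and then $\rho^+=\delta^+/\la\eta,\delta^+\ra$ drops out of the identity $\tfrac{1}{T}\la\eta,\tilde c(T)-\tilde c(0)\ra\to 1$ in two lines.

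\textbf{The uniqueness step is wrong as stated.} Two minimals in $\R^2$ with distinct, non-opposite asymptotic directions need not intersect \emph{successively}: two Euclidean lines through the origin meet exactly once, and no translate by $z\in\Z^2$ changes that. In the non-reversible setting a single crossing, or even a pair of crossings with opposite time-orientation, yields no contradiction to minimality. Your argument uses only that $c_1,c_2$ are $l_F$-minimal, never that they are semistatic for the \emph{same} $\eta$; that hypothesis must enter. The paper uses weak KAM here: pick $c_0$ from $\M_\eta$, which lies in $\J(u)$ for every $u\prec L_\eta$ (Theorem~\ref{J(u)}); since the given semistatic $c$ lies in some $\J(u)$, both $c,c_0$ sit in a common Lipschitz graph over $0_\T$, so their lifts are \emph{disjoint} in $\R^2$. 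Disjointness (strictly stronger than no-successive-intersection) forces $\delta^+(c)=\pm\delta^+(c_0)$, and the formula $\rho=\delta^+/\la\eta,\delta^+\ra$ then gives $\rho(c)=\rho(c_0)$.
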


\begin{proof}
Let $c$ be an $\eta$-semistatic lifted to $\R^2$. We first prove everything for $\delta^\pm$. Recall that $\|c(t)\|\neq 0$ for large $|t|$ by the observations \ref{<eta,rho>=1} and suppose $\delta^+(c)$ would not exist. Then we find two limit points of $\frac{c(t)}{\|c(t)\|}$ for $t\to\infty$, say $v_1=\lim\frac{c(s_i)}{\|c(s_i)\|},v_2=\lim\frac{c(t_i)}{\|c(t_i)\|}$ with $s_i<t_i<s_{i+1}$. Put two disjoint open cones $C_j$ around $\R_{>0}v_j$. For large $i$ we have $c(s_i)\in C_1$ and $c(t_i)\in C_2$ (as long as e.g. $v_1\approx \frac{c(s_i)}{\|c(s_i)\|} \in C_1$), so $c$ oscillates between $C_1,C_2$. This shows that for some periodic minimal $c_0$ from theorem \ref{hedlund}, $c$ has to intersect $c_0$ successively, contradicting the minimality of $c,c_0$.

To show $-\delta^-(c)= \delta^+(c)$ suppose the contrary and consider disjoint open cones $C_-,C_+$ around $\R_{>0}\delta^-(c)$, $\R_{>0}\delta^+(c)$, respectively. By the convergence $\frac{c(t)}{\|c(t)\|}\to\delta^\pm$, we have $c(\pm t)\in C_\pm$ for large $|t|$. Also recall $\|c(t)\|\to\infty, t\to\pm\infty$. Now consider a periodic minimal $c_0$ intersecting first $C_-$, then $C_+$ far away from the origin. This again gives successive intersections of $c,c_0$.

To show the existence of $\rho^+(c)$ observe that by \ref{<eta,rho>=1}
\begin{align*}
& \underset{\to 1}{\underbrace{\frac{1}{T} \int_0^T \la \eta, \dot c \ra dt}} = \left \langle \eta , \frac{c(T)-c(0)}{\|c(T)\|}\frac{\|c(T)\|}{T} \right \rangle = \frac{\|c(T)\|}{T} \left \langle \eta , \underset{\to \delta^+(c)}{\underbrace{\frac{c(T)-c(0)}{\|c(T)\|}}} \right \rangle \\
\Rightarrow \quad & \frac{c(T)}{T} = \frac{\|c(T)\|}{T}\frac{c(T)}{\|c(T)\|} \to \frac{\delta^+(c)}{\la \eta,\delta^+(c) \ra} = \rho^+(c) .
\end{align*}

With $-\delta^-=\delta^+$ we get with the same calculations as above for $\delta^-$:
\[ \rho^- = \lim_{T\to \infty} \frac{c(-T)}{T} = \frac{\delta^-}{\la \eta, -\delta^- \ra} = -\frac{\delta^+}{\la \eta, \delta^+ \ra} = -\rho^+. \]
Finally
\[ \rho^+ = \frac{1}{2}(\rho^+(c)-\rho^-(c) ) = \lim_{T\to\infty}\frac{c(T)}{2T}-\frac{c(-T)}{2T} = \rho . \]

To show the uniqueness of $\rho$ in $\NN_\eta$, choose some curve $c_0$ from $\M_\eta$. By theorem \ref{J(u)} we find some $u\prec L_\eta$, s.th. $c,c_0$ are calibrated for $u$ ($c_0$ is calibrated for all $u\prec L_\eta$) and $c,c_0$ cannot intersect in $\R^2$. This shows $\delta^+(c) = \pm \delta^+(c_0)$ (otherwise consider two disjoint cones $C,C_0$ around $\R\delta^+(c),\R\delta^+(c_0)$, now $c$ has to get from one side of $C_0$ to the other). Now
\[ \rho(c) = \frac{\delta^+(c)}{\la \eta, \delta^+(c) \ra} = \frac{\pm \delta^+(c_0)}{\la \eta, \pm \delta^+(c_0) \ra} = \rho(c_0). \]
\end{proof}

\begin{cor}\label{alpha diffbar}\begin{itemize}
\item $\al$ is $C^1$ everywhere in $H^1(\T,\R)-\{0\}$.
\item $\M_\eta=\M^{\nabla\al(\eta)}$ for all $\eta\in H^1(\T,\R)-\{0\}$.
\item The rotation vector of $\eta$-semistatics is $\rho=\nabla\al(\eta)$.
\item $\beta$ is strictly convex.
\end{itemize}\end{cor}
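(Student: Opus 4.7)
The plan is to reduce everything to the single statement that $\partial\al(\eta)$ is a singleton for every $\eta\in H^1(\T,\R)-\{0\}$; the four items then follow by convex analysis combined with Propositions \ref{mather dual} and Theorem \ref{existence rot vector}.

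I would first treat $\eta\in H_F$. Proposition \ref{mather dual} gives $\partial\al(\eta)=\{\rho(\mu):\mu\in\MM_\eta\}$. Any $\mu\in\MM_\eta$ is supported in $\NN_\eta$, so $\mu$-almost every orbit is $\eta$-semistatic; Theorem \ref{existence rot vector} then tells us that all $\eta$-semistatics share a common rotation vector $\rho$, and expanding $\la\xi,\rho(\mu)\ra=\int\xi\,d\mu$ via Birkhoff's ergodic theorem forces $\rho(\mu)=\rho$. Hence $\partial\al(\eta)=\{\rho\}$ for $\eta\in H_F$. For a general $\eta=s\eta_0$ with $s>0$, $\eta_0\in H_F$, the equivalence $\eta\in\partial\beta(h)\iff s\eta\in\partial\beta(sh)$ from Lemma \ref{alpha hom} combined with the duality in Proposition \ref{mather dual} yields $\partial\al(s\eta_0)=s\,\partial\al(\eta_0)$, still a singleton.

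The singleton property is equivalent to differentiability of the convex function $\al$ at $\eta$, with $\nabla\al(\eta)$ equal to the unique subgradient; differentiability on an open convex set plus convexity upgrades automatically to $C^1$, which is item (i). Items (ii) and (iii) are direct rewritings: Proposition \ref{mather dual} gives $\M_\eta=\bigcup_{h\in\partial\al(\eta)}\M^h=\M^{\nabla\al(\eta)}$, while the unique subgradient $\nabla\al(\eta)$ has just been identified with the rotation vector of $\eta$-semistatics. For strict convexity of $\beta$ in (iv), suppose $\beta$ were affine on a segment $[h_1,h_2]$ with $h_1\neq h_2$ and slope $\eta_0$, so that $[h_1,h_2]\subset\partial\al(\eta_0)$. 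If $\eta_0\neq 0$ this contradicts the singleton property on $H^1(\T,\R)-\{0\}$. If $\eta_0=0$, then $[h_1,h_2]$ lies in the set of minimisers of $\beta$; but $\beta(0)=0$ is realised by the Dirac measure at any zero vector, which is a fixed point of $\phi^t$ where $L_F$ vanishes, whereas $\beta(h)>0$ for $h\neq 0$ since a measure with non-zero rotation vector cannot be supported in the zero section. Hence $\beta$ is minimised only at $0$, again a contradiction.

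The real obstacle is the uniqueness of the rotation vector of semistatics, which has already been isolated as Theorem \ref{existence rot vector} and rests on the topological observation that two semistatics of differing asymptotic direction would cross a periodic minimal successively, violating minimality. Once uniqueness of $\rho$ is in hand, the corollary is essentially a dictionary translation through the convex duality of Proposition \ref{mather dual}.
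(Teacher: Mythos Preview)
Your argument is correct and follows the same route as the paper: both use Theorem \ref{existence rot vector} together with Birkhoff's ergodic theorem to show every $\mu\in\MM_\eta$ has the common rotation vector $\rho$, conclude $\partial\al(\eta)$ is a singleton, and then invoke convex analysis (the paper cites Rockafellar 24.4) for the $C^1$ upgrade. The only difference is item (iv): the paper simply cites Proposition 4.27(i) of \cite{sorrentino} together with Proposition \ref{mather dual}, whereas you give a direct argument treating the case $\eta_0=0$ via $\beta(h)>0$ for $h\neq 0$; note also that your equality $\partial\al(\eta)=\{\rho(\mu):\mu\in\MM_\eta\}$ is a little more than Proposition \ref{mather dual} literally states, but only the inclusion $\subset$ is needed and that does follow.
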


\begin{proof}
Let $\xi$ be a closed 1-form on $\T$ and $\mu\in\MM_\eta$. All $v\in\NN_\eta$ have some fixed rotation vector $h=\rho(c_v)$ by theorem \ref{existence rot vector}, so
\[ \la\xi, h\ra = \la\xi,\rho^+(c_v)\ra = \lim_{T\to\infty} \frac{1}{T}\int_0^T \la\xi,\dot c_v(t)\ra dt =: \hat \xi(v) , \]
i.e. $\hat\xi$ is the time avarage of $\xi:T\T\to\R$, it exists everywhere and is constant. Using Birkhoff's ergodic theorem, we obtain
\[ \la\xi, h\ra \stackrel{\mu(TM)=1}{=} \int_{TM} \hat \xi d\mu =  \int_{TM} \la\xi,v\ra d\mu(v) \stackrel{\text{def $\rho(\mu)$}}{=} \la \xi,\rho(\mu) \ra \quad \Rightarrow\quad h=\rho(\mu). \]
Hence, all $\mu\in \MM_\eta$ have the fixed rotation vector $h$. By proposition \ref{mather dual}, $\#\partial\al(\eta)=1$ and $\M_\eta=\M^{\nabla\al(\eta)}$ follow and in particular $h=\nabla\al(\eta)$. By theorem 24.4 in \cite{rockafellar}, $\al$ is $C^1$. The strict convexity of $\beta$ is a consequence of proposition 4.27 (i) in \cite{sorrentino} and proposition \ref{mather dual}.
\end{proof}

By homogenity of $\al$ one sees that $H_F$ is a $C^1$-submanifold of $\R^2$ (implicit funtion theorem) bounding the convex region $\{\al\leq 1/2\}$. This shows the following.

\begin{cor}\label{alpha C^1}
Consider the map $\rho: H_F \to G_F$ with $\eta\mapsto \nabla \al(\eta)$. Then \begin{itemize}
\item[(i)] the lifts $\tilde \rho:\R\to\R$ are non-decreasing (recall $H_F,G_F\cong S^1$),
\item[(ii)] $\rho$ is surjective,
\item[(iii)] $\rho$ has mapping degree 1.
\end{itemize}
\end{cor}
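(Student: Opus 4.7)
The plan is to exploit that $H_F$ is the boundary of the convex region $\{\al\leq 1/2\}\subset H^1(\T,\R)\cong\R^2$ and that, by convex duality, $G_F$ is likewise the boundary of a convex region in $H_1(\T,\R)\cong\R^2$; the map $\rho$ is then, up to positive scaling, the Gauss map of the convex body $\{\al\leq 1/2\}$. First I would verify that $\rho$ is well-defined: for $\eta\in H_F$, Legendre duality (proposition \ref{mather dual}) gives $\eta\in\partial\beta(\nabla\al(\eta))=\F^{\nabla\al(\eta)}$, and since $\al$ is constant on the convex set $\F^{\nabla\al(\eta)}$, the whole set sits in $H_F$, i.e.\ $\nabla\al(\eta)\in G_F$. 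Continuity of $\rho$ is immediate from the $C^1$-regularity of $\al$ on $H^1(\T,\R)-\{0\}$.

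Next I would address (ii): given $h\in G_F$, the subdifferential $\partial\beta(h)$ is non-empty since $\beta$ is a finite-valued convex function on $\R^2$, and by the very definition of $G_F$ any $\eta\in\partial\beta(h)$ lies in $H_F$. Convex duality then yields $h\in\partial\al(\eta)=\{\nabla\al(\eta)\}$, hence $\rho(\eta)=h$.

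For (i): $\nabla\al(\eta)$ is positively proportional to the outward normal of $H_F$ at $\eta$, because $\nabla\al$ points in the direction of increase of $\al$ and $H_F$ is a level set. Parametrize $H_F$ counterclockwise by $s\mapsto \eta(s)$; the standard fact that the outward normals along the boundary of a convex body in $\R^2$ wind counterclockwise monotonically implies that the argument of $\rho(\eta(s))$ is non-decreasing in $s$. Since $G_F$ is likewise a Jordan curve around $0$ and carries the ccw orientation, the position of $\rho(\eta(s))$ on $G_F$ advances monotonically in the chosen cyclic direction, so every lift $\tilde\rho:\R\to\R$ is non-decreasing.

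For (iii): by (i) and (ii) the degree is a positive integer. The fibers are $\rho^{-1}(h)=\F^h$, each a closed convex (hence connected) subset of $H_F$. A monotone map $S^1\to S^1$ of degree $d\geq 2$ must admit a disconnected generic fiber (lift to $\R$: one period of the source is mapped across $d$ periods of the target, so generic preimages split into $d$ mutually separated subsets within a single period of the source), contradicting the connectedness of the $\F^h$; hence $d=1$. The main delicate point I anticipate is the bookkeeping of the cyclic orientations on $H_F$ and $G_F$: one has to verify carefully that the two ccw orientations are compatible under $\rho$, since otherwise (i) would only give monotonicity up to a global sign and (iii) would only read degree $\pm 1$.
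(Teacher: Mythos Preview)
Your proposal is correct and follows essentially the same route the paper has in mind: the paper's entire argument is the single sentence preceding the corollary, namely that $H_F$ is a $C^1$-submanifold bounding the convex region $\{\al\leq 1/2\}$, and then asserts that the three claims follow. You have simply supplied the details the paper omits, by recognising $\rho$ as (a positive rescaling of) the Gauss map of this convex body and reading off monotonicity, surjectivity, and degree~$1$ from standard convex geometry.

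One minor remark on your step (iii): your fiber argument in fact works for \emph{every} fiber, not just a generic one. If $\tilde\rho$ is non-decreasing with $\tilde\rho(s+1)=\tilde\rho(s)+d$, then for any target value the $d$ preimages $\tilde\rho^{-1}(y),\tilde\rho^{-1}(y+1),\dots,\tilde\rho^{-1}(y+d-1)$ inside one period are non-empty (intermediate value theorem) pairwise disjoint closed intervals, so every fiber has exactly $d$ components; connectedness of a single $\F^h$ already forces $d=1$. Your caution about the orientation bookkeeping is well placed but easily resolved: $\nabla\al(\eta)$ is the outward normal to $\{\al\leq 1/2\}$ at $\eta$, and the convention in the paper of orienting both $H_F$ and $G_F$ anticlockwise makes $\rho$ orientation-preserving.
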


In the following two subsections we study the structure of the \Mane sets $\NN_\eta$. For $u\prec L_\eta$ the invariant set $\J(u)$ is contained in a Lipschitz graph and curves $c_v(\R),c_w(\R)\subset\R^2$ with $v,w\in\J(u)$ are equal or disjoint. Suppose each such geodesic would intersect the verticals $V_n = \{x \in\R^2: x_1=n\}, n\in\N,$ exactly once. We could then describe the geodesic flow in $\J(u)$ as the \Poincare map of the projected geodesic flow in $\pi(\J(u))$ from $V_0$ to $V_1$. Interpolating this map in the gaps linearly and projecting to the torus would give a circle homeomorphism with rotation number linked to $\rho(\J(u))=\rho(\eta)$. This approach is carried out in Bangert's article \cite{bangert}. Our techniques are as well motived by the study of circle homeomorphisms. The distinction between rational or irrational slope of $h=\rho(\eta)$ becomes fundamental (we will just say that $h$ is (ir)rational).

In the following we work in $\R^2$ and consider all objects as lifted, using the same letters. Let $v\in\NN_\eta\subset T\R^2$ and choose $u\prec L_\eta$ with $v\in\J(u)$. By theorem \ref{existence rot vector}, $c_v(\R)$ has sublinearly bounded distance from $\R h$. We can define a partial ordering on $\J(u)$.

\begin{defn}\label{ordering}
For $v\in\NN_\eta$ let $V(v-)\subset\R^2$ be the closed half space below and $V(v+)$ the one above $c_v(\R)$ (here we pick the orientation on $(\R h)^\perp$ defined by $i\cdot h$). Define for $v,w\in \J(u)\subset T\R^2$
\[ v < w \quad :\iff \quad  c_v(\R) \subset \Int V(w-) \quad \iff \quad c_w(\R)\subset \Int V(v+). \]
\end{defn}

\begin{bemerk}
Obviosly $v<w$ iff $\phi^sv<\phi^tw$ for any $s,t\in\R$. We also write $c_v<c_w$.
\end{bemerk}

\subsection{Irrational directions}

Let $h\in G_F$ have irrational slope and fix $\eta\in \F^h$. The following arguments mimic arguments for circle homeomorphisms with irrational rotation number, cf. also chapter 3 in \cite{denzler} or chapters 2 and 4 in \cite{bangert}. As for irrational circle homeomorphisms, the $\al$- and $\om$-limit sets play a crucial role in the study of the dynamics in $\NN_\eta$.

\begin{defn} \label{limit sets}
For $v\in\NN_\eta$ set $R(v)=dp^{-1}(\al(v)\cup\om(v))$, i.e. $R(v)$ is the lift of the $\al$-/$\om$-limit sets of $v$ w.r.t. the geodesic flow $\phi^t$.
\end{defn}

\begin{bemerk}\label{bem limit sets}\begin{itemize}
\item[(i)] It is well known that $R(v)$ is closed and $\phi^t$-invariant. If $v\in\J(u)$ for some $u\prec L_\eta$, then by the closedness and $\phi^t$-invariance of $\J(u)$ we have $R(v)\subset \J(u)$.
\item[(ii)] $R(v)$ could be analogously defined as the set of $w\in S\R^2$, s.th. there are times $t_i\in\R$ and translates $z_i\in\Z^2 -\{ 0\}$ with $w=\lim_i d\tau_{z_i}\phi^{t_i} v$.
\end{itemize}\end{bemerk}

Here are some properties of $R(v)$.

\begin{prop} \label{eigenschaften M} Let $R=R(v)$. Then \begin{itemize}
\item[(i)] for $w\in R$ we have $R(w)=R$,
\item[(ii)] $dp(R)\subset S\T$ is minimal for $\phi^t$ (no non-trivial closed invariant subsets), 
\item[(iii)] if $\pi (R)\subset \R^2$ has non-empty interior, then $\pi (R) = \R^2$.
\end{itemize}
\end{prop}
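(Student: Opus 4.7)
The plan is to establish parts (i) and (ii) essentially formally from the characterization of $R(v)$ in Remark \ref{bem limit sets}(ii), using the equivariance $d\tau_z \circ \phi^t = \phi^t \circ d\tau_z$ of the lifted geodesic flow under deck transformations, and to treat (iii) via the geometry of minimal geodesics of irrational direction $\delta$.

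For (i), writing $w = \lim_i d\tau_{z_i}\phi^{t_i}v \in R(v)$ with $z_i \in \Z^2 \setminus \{0\}$, I would first invert to get $v = \lim_i d\tau_{-z_i}\phi^{-t_i}w$, and then approximate any $y = \lim_j d\tau_{\zeta_j}\phi^{s_j}v \in R(v)$ diagonally as $y = \lim_k d\tau_{\zeta_{j_k}-z_{i_k}}\phi^{s_{j_k}-t_{i_k}}w$. The translates $\zeta_{j_k}-z_{i_k}$ can be kept nonzero because the $z_i$ are necessarily unbounded ($\|\tilde c_v(t_i)\|\to\infty$ as $|t_i|\to\infty$, cf.\ \ref{<eta,rho>=1}), so for each fixed $j$ at most finitely many $i$ satisfy $z_i = \zeta_j$. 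Hence $y \in R(w)$, and symmetry gives the reverse inclusion.

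For (ii), given (i), I would take $K \subset dp(R)$ non-empty, closed, and $\phi^t$-invariant, pick $u \in K$ with a lift $w \in R$, and apply (i) to get $R(w) = R$, i.e.\ $\alpha_\phi(u) \cup \omega_\phi(u) = dp(R)$. Since $K$ is closed and $\phi^t$-invariant, it contains the closure of the orbit of $u$, hence $\alpha_\phi(u) \cup \omega_\phi(u) = dp(R)$, forcing $K = dp(R)$.

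For (iii), the first step is to observe that $\pi(R)$ is $\Z^2$-invariant, by shifting the sequence $z_i$ in Remark \ref{bem limit sets}(ii) by any $z \in \Z^2$. Next, fixing $u_0 \prec L_\eta$ with $v \in \J(u_0)$, Remark \ref{bem limit sets}(i) gives $R \subset \J(u_0)$, and Theorem \ref{J(u)} gives that $\pi$ is injective with Lipschitz inverse on the lifted set. Assuming $B_\epsilon(x_0) \subset \pi(R)$, each $y \in B_\epsilon(x_0)$ then lifts to a unique $w_y \in R$ via a Lipschitz map $y \mapsto w_y$, yielding pairwise disjoint geodesics $\tilde c_{w_y}(\R) \subset \pi(R)$ of asymptotic direction $\delta$. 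Continuous dependence on initial conditions combined with this Lipschitz structure would give that the union $T := \bigcup_{y\in B_\epsilon(x_0)}\tilde c_{w_y}(\R) \subset \pi(R)$ contains an open strip $S$ of positive transverse width around the line of slope $\delta$ through $x_0$. Finally, since $\delta$ has irrational slope, $\{\langle z,\delta^\perp\rangle : z \in \Z^2\}$ is dense in $\R$, so the translates $\{S+z\}_{z\in\Z^2}$ cover a dense subset of $\R^2$; combined with closedness and $\Z^2$-invariance of $\pi(R)$, this forces $\pi(R) = \R^2$.

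The main obstacle is the last step in (iii): showing that the Lipschitz family of disjoint geodesics through $B_\epsilon(x_0)$ really sweeps out an open strip of positive transverse width, rather than merely a collection of curves with possible gaps. The idea would be to exploit that the flow map $\Psi_t(y) := \tilde c_{w_y}(t)$ equals the identity at $t = 0$ on $B_\epsilon(x_0)$ and varies continuously in $y$ (via the Lipschitz property), so that for small $|t|$ the image $\Psi_t(B_\epsilon(x_0))$ is an open set close to $B_\epsilon(\tilde c_{w_{x_0}}(t))$, whose union over $t$ yields a genuine 2D tube around the central geodesic contained in $\pi(R)$.
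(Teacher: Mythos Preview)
Your argument for (i) has a genuine gap. The inversion step $v = \lim_i d\tau_{-z_i}\phi^{-t_i}w$ does \emph{not} follow from $w = \lim_i d\tau_{z_i}\phi^{t_i}v$: the geodesic flow $\phi^{-t_i}$ is not an isometry, and for $|t_i|\to\infty$ it can blow up small distances arbitrarily. Concretely, suppose $\M^h$ is a Denjoy-type Cantor set and $v$ lies on an orbit in a gap of $\pi(\M^h)$. Then $dp(R(v))$ equals the Cantor set, so any $w\in R(v)$ has $dp(w)$ on the Cantor set, and since the Cantor set is closed and invariant, $\alpha(dp(w))\cup\omega(dp(w))$ is contained in it. Hence $dp(v)\notin\alpha(dp(w))\cup\omega(dp(w))$, i.e.\ $v\notin R(w)$, and your inversion is simply false. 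Your diagonal argument, which rests on this inversion, therefore collapses. The paper's proof avoids this by never claiming $v\in R(w)$; instead it shows directly that any $w_0\in R(v)$ lies in $R(w)$ via a geometric squeezing argument: translates of $c_w$ get arbitrarily close to $c_v$, and Lemma \ref{h irrational} (which is exactly where the irrationality of $h$ enters, ruling out periodic orbits) guarantees that distinct translates $\tau_z c_v(\R)$ stay a positive distance $\e(z)$ apart, so one can trap a translate of $c_w$ between consecutive approximants $\tau_{z_{i-1}^0}c_v,\tau_{z_i^0}c_v$ of $c_{w_0}$ and squeeze. Your purely formal approach misses this and cannot succeed without some input of this kind.

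Your (ii) is fine and matches the paper. For (iii), your route via $\Z^2$-translates of a strip and the irrationality of $\delta$ can probably be made to work, but it is considerably more delicate than you indicate: in the Denjoy situation the tube $\bigcup_t \Psi_t(B_\e(x_0))$ can shrink to zero transverse width as $|t|\to\infty$, so you do not get a genuine strip, and covering $\R^2$ by translates of a tapering tube takes more care. The paper's argument is much cleaner: it uses the minimality from (ii) to show that $\pi(R)$ is \emph{open} (for any $w\in R$ there is $t$ with $c_w(t)$ in the assumed open set $U\subset\pi(R)$, and then $\pi\phi^{-t}\big((\pi|_{\J(u)})^{-1}(U)\big)$ is an open neighborhood of $\pi w$ in $\R^2$ by invariance of domain). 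Together with closedness of $\pi(R)$ this immediately gives $\pi(R)=\R^2$, with no appeal to the irrationality of $\delta$ at this step.
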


For the proof we need the following lemma, which is a consequence of the irrationality of $h$, by which $\NN_\eta$ cannot contain periodic orbits.

\begin{lemma} \label{h irrational}
Let $v\in \J(u)$ be fixed. For $z\in\Z^2-\{0\}$ there is $\e(z)>0$, s.th. in $\R^2$ the curves $c_v(\R)$ and $\tau_z c_v(\R)$ stay at distance $\e(z)$ from another.
\end{lemma}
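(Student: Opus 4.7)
The plan is a proof by contradiction: assume there exist sequences $t_n, s_n \in \R$ with $c_v(t_n) - (c_v(s_n) + z) \to 0$, and produce a limit vector $\hat v \in \NN_\eta$ whose orbit has a $z$-translation symmetry, contradicting the irrational slope of the common rotation vector $h$ of $\eta$-semistatics (theorem \ref{existence rot vector}).

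The decisive first step is to bound $T_n := t_n - s_n$ (WLOG $t_n > s_n$ along a subsequence). Since $\eta \in H_F$ and $v \in \NN_\eta \subset \{E=1/2\}$, we have $F(\dot c_v)\equiv 1$, and lifted to $\R^2$ the semistatic $c_v$ is a global $l_F$-minimizer between its endpoints (the constant $1$-form $\eta$ is exact on $\R^2$, so lifted semistatics are $A_{L_F+1/2}$-minimizers, hence $l_F$-minimizers by proposition \ref{F=1 optimal}). This yields
\[ T_n = l_F(c_v[s_n,t_n]) = d_F(c_v(s_n),c_v(t_n)) \leq d_F(c_v(s_n),c_v(s_n)+z) + o(1) = d_F(0,z) + o(1), \]
using $\Z^2$-translation invariance of $F$. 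So a subsequence $T_n \to T^* > 0$ (the lower bound $T^* > 0$ follows from $z \neq 0$ and the Lipschitz bound $\|c_v(t_n) - c_v(s_n)\| \leq c_F T_n$).

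Next I would produce the limit vector. The basepoints $p(c_v(t_n)), p(c_v(s_n)) \in \T$ converge to a common point in $\T$, so the Lipschitz graph property of $\J(u) \subset S\T$ (theorem \ref{J(u)}) forces $dp(\phi^{t_n}v), dp(\phi^{s_n}v)$ to converge to a common $\tilde v_* \in \J(u)$. Choosing a lift $\hat v \in dp^{-1}(\tilde v_*)$ and $\zeta_n, \zeta_n' \in \Z^2$ so that $\phi^{t_n}v - \zeta_n \to \hat v$ and $\phi^{s_n}v - \zeta_n' \to \hat v$ in $T\R^2$, the basepoint identity forces $\zeta_n - \zeta_n' \to z$, hence $\zeta_n - \zeta_n' = z$ eventually. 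By $\Z^2$-equivariance $\phi^t \circ \tau_\zeta = \tau_\zeta \circ \phi^t$ (from $\Z^2$-invariance of $L$),
\[ \phi^{T_n}(\phi^{s_n}v - \zeta_n') = (\phi^{t_n}v - \zeta_n) + z, \]
and passing to the limit $T_n \to T^*$ via continuity of the flow on bounded time intervals yields $\phi^{T^*}\hat v = \hat v + z$. Thus $\pi\phi^t\hat v$ is a $T^*$-periodic geodesic on $\T$ of homotopy class $z \neq 0$ with rotation vector $z/T^*$ of rational slope. But $\hat v \in \NN_\eta$ (closed by proposition \ref{N semi-cont}), so its rotation vector must equal $h$, of irrational slope. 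Contradiction.

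The main obstacle is the first step: bounding $T_n$. Without the identification of time with $F$-length (which relies on the arc-length parametrization specific to the energy level $\{E=1/2\}$ and the global $l_F$-minimality of lifted semistatics in $\R^2$), $T_n$ could a priori be unbounded, and the Lipschitz-graph plus flow-continuity argument would have no finite time $T^*$ to evaluate the flow at. Once $T_n$ is bounded, the remainder is a fairly standard compactness package.
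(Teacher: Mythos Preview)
Your proof is correct and follows essentially the same route as the paper's. Both bound the time difference via arc-length parametrisation and global $l_F$-minimality of the lifted semistatic, use the Lipschitz graph property of $\J(u)$ to obtain a common limit vector, and then exploit $\Z^2$-equivariance of the flow to produce a periodic orbit in $\NN_\eta$, contradicting the irrationality of $h$.
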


\begin{proof}[Proof of \ref{h irrational}]
Suppose there are $t_k,\tilde t_k\in\R$ with $\e_k := d_F(c_v(t_k),\tau_zc_v(\tilde t_k))\to 0$ monotonic. We find $z_k\in\Z^2$, s.th. w.l.o.g. $\tau_{z_k}c_v(t_k)$ and hence $\tau_{z_k}\tau_zc_v(\tilde t_k)$ converge to some $x\in\R^2$ and both velocity vectors of the two curves converge to some $v_0\in \J(u) \cap S_x\T$ by the graph property. By minimality of $c_v$ in $\R^2$ and $l_F(c_v[a,b])=b-a$, we have
\[ \tilde t_k \leq d_F(\pi v, \tau_z \pi v) + t_k + \e_k , \qquad  t_k \leq d_F(\pi v, \tau_z \pi v) + \tilde t_k + \e_k, \]
i.e. w.l.o.g. $\tilde t_k-t_k \to T$ for some $T\in\R$. With $\phi^t d\tau_z=d\tau_z\phi^t$ ($\tau_z$ being isometries w.r.t. $F$) we have
\[ \phi^T v_0 = \lim_k \phi^{\tilde t_k-t_k} d\tau_{z_k} \phi^{t_k} v = \lim_k d\tau_{z_k} \phi^{\tilde t_k} v = d\tau_z^{-1}\lim_k d\tau_{z_k}d\tau_{z} \phi^{\tilde t_k} v = d\tau_z^{-1} v_0 , \]
giving a periodic orbit $\dot c_{v_0}$ in $\J(u)$, contradiction.
\end{proof}

\begin{proof}[Proof of \ref{eigenschaften M}]
(i) Let $w\in R$. As $R$ is closed and invariant, we have $R(w)\subset R$. We prove $R\subset R(w)$. By remark \ref{bem limit sets}, we find $z_i, t_i$ with $d\tau_{z_i} \phi^{t_i}v\to w$ and hence for any given $\e>0$ some $z(\e)\in\Z^2$ s.th.
\[ d_{(\R^2,F)}(\tau_{z(\e)} c_w(0),c_v(\R))\leq \e/2. \]
Using lemma \ref{h irrational} and taking any $w_0=\lim_{i\to\infty}d\tau_{z_i^0} \phi^{t_i^0}v\in R$ with distinct $z_i^0\in\Z^2$ we set
\[ \e_i:=\min\{ \e(z_i^0-z_{i-1}^0), \e(z_{i-1}^0-z_i^0) \}>0. \]
Now $\tau_{z(\e_i)}c_w(\R)$ lies between $c_v(\R)$ and one of the curves
\[ \tau_{z_i^0-z_{i-1}^0}c_v(\R), \quad \tau_{z_{i-1}^0-z_i^0}c_v(\R) \]
by lemma \ref{h irrational}. W.l.o.g. the approximation $d\tau_{z_i^0} \phi^{t_i^0}v\to w_0$ is strictly monotone w.r.t. $<$, say, increasing and also we may assume that all $\tau_{z(\e_i)}c_w(\R)$ lie on one side of $c_v(\R)$, say $\tau_{z(\e_i)}c_w(\R) \subset V(v-)$. Then $\tau_{z_i^0}\tau_{z(\e_i)}c_w(\R)$ lies between $\tau_{z_{i-1}^0}c_v(\R)< \tau_{z_i^0} c_v(\R) \subset V(w_0-)$, and by the approximation $\tau_{z_{i-1}^0}c_v(\R)\to c_{w_0}(\R)$ we can also approximate $w_0$ by $w$.

(ii) Any closed invariant set $N\subset R$ with $w\in N$ contains $R(w)=R$ by (i).

(iii) The closed set $R$ is contained in the Lipschitz graph $\J(u)$ over $0_{\R^2}$, so $\pi(R)\subset \R^2$ is closed. Let $U\subset\pi (R)$ be some non-empty open set. For $w\in R$ we find some time $t$ with $c_w(t)\in U$ by (ii), so for $\tilde U=(\pi|_{\J(u)})^{-1}(U)\subset R$, the set $\pi\phi^{-t}(\tilde U)$ is an open set in $\pi(R)$ containing $\pi w$. Thus $\pi(R)$ is also open, so $\pi(R)=\R^2$.
\end{proof}

For irrational circle homeomorphisms it is well known that the limit set $\om(x)\subset S^1$ is independent of $x$. Here is the corresponding result for $R(v)\subset \NN_\eta$.

\begin{prop} \label{M eindeutig}
For all $v,w\in\NN_\eta$ we have $R(v)=R(w)=:R$.
\end{prop}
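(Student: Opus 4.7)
By Proposition \ref{eigenschaften M}(ii) both $dp(R(v))$ and $dp(R(w))$ are minimal closed invariant subsets of $S\T$, hence they are either equal or disjoint. It is therefore enough to exhibit a single $v_0 \in R(v) \cap R(w)$, because Proposition \ref{eigenschaften M}(i) will then give $R(v) = R(v_0) = R(w)$. The plan proceeds in two stages: first, show that both $dp(R(v))$ and $dp(R(w))$ lie inside the Mather set $\M_\eta$; second, argue that for irrational $h$ the set $\M_\eta$ contains a unique minimal subset.

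For the first stage, by Krylov-Bogolyubov pick a $\phi^t$-invariant Borel probability measure $\mu_v$ on the compact minimal set $dp(R(v))$; minimality forces $\supp \mu_v = dp(R(v))$. Remark \ref{<eta,rho>=1} gives, along every $\eta$-semistatic $c$ and every closed $1$-form $\xi$ on $\T$,
\[ \frac{1}{T}\int_0^T \la \xi, \dot c\ra \, dt \longrightarrow \la \xi, h\ra, \qquad \frac{1}{T} A_{L_\eta}(c[0,T]) \longrightarrow 0. \]
Since $\mu_v$ is supported in $\NN_\eta$, Birkhoff's theorem converts these $\mu_v$-a.s. constant time averages into $\int \xi \, d\mu_v = \la \xi, h\ra$ and $\int L_\eta \, d\mu_v = 0$, so $\rho(\mu_v) = h$. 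Expanding $L_\eta = L - \eta + \al(\eta)$ and invoking the Fenchel duality from Proposition \ref{mather dual} together with $\eta \in \F^h = \partial \beta(h)$ gives
\[ \int L \, d\mu_v = \la h, \eta\ra - \al(\eta) = \beta(h), \]
so $\mu_v \in \MM^h$ and $dp(R(v)) = \supp \mu_v \subset \M^h = \M_\eta$. The identical argument yields $dp(R(w)) \subset \M_\eta$.

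For the second stage I plan to follow Bangert \cite{bangert} and use a Poincaré return map. Fix a line $V \subset \R^2$ of rational slope distinct from that of $h$; by Theorem \ref{existence rot vector} every $\eta$-semistatic of rotation vector $h$ crosses each coset $V + z$ ($z \in \Z^2$) transversally in a discrete sequence of times, so projecting gives a Poincaré section $V / \Z \cong S^1$. The ordering from Definition \ref{ordering} makes the induced first-return map order-preserving, and it extends to an orientation-preserving circle homeomorphism whose rotation number is determined by $h$ and is irrational; Denjoy theory then singles out a unique minimal set (either the whole circle or a Cantor set), which pulls back to a unique minimal subset of $\M_\eta$ and hence forces $dp(R(v)) = dp(R(w))$. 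Lifting then yields $R(v) = R(w)$. The main obstacle is precisely this last stage: gluing the first-return dynamics of the different semistatics of $\NN_\eta$ (including those merely heteroclinic within $\M_\eta$) into a single circle homeomorphism and verifying that the resulting rotation number is irrational. A quicker but less self-contained alternative is to invoke Mather's classical uniqueness of Aubry-Mather sets for irrational rotation vectors on the $2$-torus, but the Bangert-style construction is natural here because it fits the circle-map heuristic flagged just before Definition \ref{ordering}.
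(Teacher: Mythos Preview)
Your first stage is sound (and essentially anticipates Corollary \ref{M mather-menge}(i)), but the second stage is a genuine gap, as you yourself flag: building a single orientation-preserving circle homeomorphism out of the return data of \emph{all} $\eta$-semistatics, not just those in $\M_\eta$, and identifying its rotation number, is real work that you have not done. Invoking Mather's uniqueness theorem instead would make the argument circular in spirit, since that theorem is precisely the discrete analogue of what this proposition establishes.

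The paper bypasses both stages with a short area argument. Pick any $w\in\M_\eta$ (nonempty) and any $v\in\NN_\eta$; since $\M_\eta\subset\bigcap_{u}\J(u)$, both lie in a common Lipschitz graph $\J(u)$. Set
\[
e:=\inf\{\, d_F(\tau_z c_w(t),\,c_v(\R)) : z\in\Z^2,\ t\in\R \,\}.
\]
If $e>0$, intersect all half-planes $V(d\tau_z w\,-)$ with $d\tau_z w>v$ and all $V(d\tau_z w\,+)$ with $d\tau_z w<v$; the result is a closed strip $S$ containing $c_v(\R)$ in its interior and bounded by two curves $c_\pm$ from $R(w)$. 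By the irrationality of $h$ and the definition of $e$, $S$ is disjoint from every nontrivial translate $\tau_z S$, so $p:S\to p(S)\subset\T$ is injective while $p(S)$ has uniform width $\geq e>0$, contradicting $\vol_{euc}(\T)<\infty$. Hence $e=0$; the graph property of $\J(u)$ then produces a point in $R(v)\cap R(w)$, and minimality (Proposition \ref{eigenschaften M}(ii)) gives $R(v)=R(w)$. Running this for two arbitrary $v,v'\in\NN_\eta$ against the same $w\in\M_\eta$ yields $R(v)=R(w)=R(v')$.

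So the missing idea is not Denjoy theory but the elementary observation that a strip of definite width disjoint from all its $\Z^2$-translates cannot fit into $\T$; this replaces your entire second stage and makes the first stage unnecessary.
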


\begin{proof}
Let $v\in\NN_\eta$ and $w\in\M_\eta$, then $v,w$ are contained in a common graph $\J(u)$ (cf. theorem \ref{J(u)}). Set
\[ e := \inf\{ d_F(\tau_z c_w(t),c_v(\R)) : z\in\Z^2, t\in \R \} \geq 0. \]
We claim $e=0$. Assume $e>0$ and consider the closed strip $S\subset \R^2$ defined by
\[ \left (\cap \{ V(d\tau_z w -) : z\in \Z^2, d\tau_z w>v \} \right ) \bigcap \left (\cap \{ V(d\tau_z w +) : z\in \Z^2, d\tau_z w< v \} \right ). \]
$S$ contains $c_v(\R)$ in its interior by $e>0$ and is bounded by two geodesics $c_\pm$ from $R(w)$, s.th. $c_-<c_v<c_+$. By definition of $e$ and the irrationality of $h$, $S$ is disjoint from all its translates. Hence $p: S\to p(S)\subset \T$ is injective, while $p(S)$ has uniform width $\geq e>0$, contradicting $\vol_{euc}(\T)<\infty$. But for $e=0$ we find $s_i^\pm,t_i\in \R$, s.th.
\[ \min_{+,-} \lim_{i\to\infty} d( c_\pm(s_i^\pm) , c_v(t_i)) = 0.\]
Using the graph property and applying suitable translates, we find $R(w)\cap R(v)\neq \emptyset$ by $\dot c_\pm \in R(w)$. It now follows from the minimality of $R(v),R(w)$ that $R(v)=R(w)$. This shows the claim, as for $v'\in\NN_\eta$ we have $R(v')=R(w)=R(v)$.
\end{proof}

\begin{cor} \label{M mather-menge} \begin{itemize}
\item[(i)] For any $\eta\in \F^h$ we have
\[ dp(R)=\M^h=\{v\in\NN_\eta: \text{$v$ is recurrent} \}. \]
\item[(ii)] For $x\in \R^2$ there are two (unique) geodesics $c_-,c_+$ from $R$ closest to $x$, s.th. $x$ lies in the strip between $c_\pm$ and $c_\pm$ are asymptotic in $\infty$ and $-\infty$.
\end{itemize} \end{cor}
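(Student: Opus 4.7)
For part (i) I establish the two equalities $dp(R) = \{v \in \NN_\eta : v \text{ recurrent}\} = \M^h$ separately. The first follows from propositions \ref{eigenschaften M}(i) and \ref{M eindeutig}: any $w \in R$ has $R(w) = R \ni w$, so $dp(w)$ is recurrent; conversely, any recurrent $v \in \NN_\eta$ lies in $dp(R(v)) = dp(R)$. For $\M^h \subset dp(R)$, observe that $\eta \in \F^h = \partial \beta(h)$ gives $\M_\eta = \M^h$ by corollary \ref{alpha diffbar}, and Poincar\'e recurrence shows that $\mu$-a.e.\ point of any $\mu \in \MM^h$ is recurrent, so $\supp \mu \subset \overline{dp(R)} = dp(R)$. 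Conversely, Krylov--Bogolyubov produces an invariant probability $\mu$ on the compact invariant set $dp(R) \subset \NN_\eta$; then $\mu \in \MM_\eta = \MM^h$ and minimality of $dp(R)$ (proposition \ref{eigenschaften M}(ii)) forces $\supp \mu = dp(R) \subset \M^h$.

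For part (ii) I first construct $c_\pm$. Since $R$ is closed, $\Z^2$-invariant, and lies in the Lipschitz graph $\J(u)$, the geodesics $c_v, v \in R,$ are pairwise disjoint or equal in $\R^2$ and totally ordered by the relation $<$ of definition \ref{ordering}. Translates $\tau_z$ supply geodesics of $R$ arbitrarily far above and below any fixed $x \in \R^2$; closedness of $R$ makes the supremum $c_-$ of geodesics weakly below $x$ and the infimum $c_+$ of those weakly above $x$ attained in $R$. If $x \in \pi(R)$ the two agree; otherwise $c_- < c_+$ strictly, no geodesic of $R$ lies strictly between them, and $x$ sits in the open strip $S$ they bound.

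The key step is the asymptoticity of $c_\pm$ at $\pm\infty$, which I will prove by an area argument. First, I claim $p|_S : S \to \T$ is injective: if $y_2 = \tau_z y_1$ with $y_i \in S$ and $z \in \Z^2 \setminus \{0\}$, then the gap $\tau_z S$ meets $S$, and since distinct gaps in $R$ are disjoint we must have $\tau_z S = S$. But then $\tau_z$ permutes the two-element set $\{c_-, c_+\}$, so $\tau_z$ or $\tau_{2z}$ stabilises $c_-$, making $c_-$ project to a closed geodesic on $\T$ and contradicting the irrationality of $h$ via lemma \ref{h irrational}. Hence $\vol(S) \leq \vol(\T) = 1$. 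Now assume for contradiction that $c_-, c_+$ are not asymptotic at $+\infty$: there exist $\e > 0$ and $t_i \to \infty$ with $d(c_-(t_i), c_+(\R)) \geq \e$. The uniformly bounded speed of $c_\pm$ (arc length and lemma \ref{c_F}) gives a $\delta > 0$ so that the distance stays $\geq \e/2$ on intervals $[t_i - \delta, t_i + \delta]$; tubular neighbourhoods of these arcs of normal width $\e/4$ lie in $S$ and each has Euclidean area bounded below uniformly. Using the linear growth $c_-(t)/t \to h$ (theorem \ref{existence rot vector}), one can space the $t_i$ so that these tubes are pairwise disjoint in $\R^2$, forcing $\vol(S) = \infty$ and giving the contradiction. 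The argument at $-\infty$ is symmetric.

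The main obstacle is this asymptoticity step: both the injectivity of $p|_S$ and the conversion of non-asymptoticity into infinite area rest on the irrationality of $h$, and the tube-disjointness argument must exploit the linear growth of $c_\pm$ carefully to rule out overlaps of tubes caused by recurrent excursions of $c_-$ in $\R^2$.
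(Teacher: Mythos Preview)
Your proof is correct and follows essentially the same approach as the paper. For part (ii) you spell out in detail the volume argument that the paper compresses into a one-line reference back to proposition~\ref{M eindeutig}: injectivity of $p|_S$ from irrationality plus infinite area from non-asymptoticity. For part (i), the only real difference is in the inclusion $dp(R)\subset\M^h$: the paper simply picks some $v\in\M^h$ and observes $dp(R)=dp(R(v))\subset\M^h$ by closedness and invariance of $\M^h$, whereas you invoke Krylov--Bogolyubov plus minimality of $dp(R)$; both are short and valid, the paper's being marginally more direct. Your tube-disjointness concern is easily handled since $c_-$ is globally minimizing in $\R^2$, so $d_F(c_-(s),c_-(t))=|s-t|$ and sufficiently spaced times give far-apart arcs.
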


\begin{proof}
(i) Picking $v\in \M^h$ we find $dp(R)=dp(M(v))\subset\M^h$ by the closedness and invariance of $\M^h$. Since $\M^h=\M_\eta\subset \NN_\eta$ consists of supports of invariant measures, any point in $\M^h$ is recurrent. Finally, if $v\in\NN_\eta$ is recurrent, then by definition $v\in dp(R(v))=dp(R)$.

(ii) The existence of the $c^\pm$ closest to $x$ follows from the closedness of $\pi(R)$. If they were not asymptotic, say in $+\infty$, the same argument as in proposition \ref{M eindeutig} gives a contradiction to $\vol_{euc}(\T)<\infty$. 
\end{proof}

The following proposition shows that there is only one \Mane set for irrational rotation vectors. Bangert proved an analogous statement for discrete variational problems in \cite{bangert2} using similar techniques. For monotone twist maps the result is due to Mather \cite{mather1}.

\begin{prop} \label{beta diffbar irrational}
$\beta$ is differentiable in irrational directions.
\end{prop}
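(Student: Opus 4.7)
The goal is to show $\F^h = \partial\beta(h)$ is a singleton for irrational $h \in G_F$. As $\F^h$ is a closed convex subset of the $C^1$ curve $H_F \subset H^1(\T,\R) \cong \R^2$, it is either a single point or a non-degenerate line segment sitting on a flat portion of $H_F$. Assuming for contradiction that $\F^h = [\eta_-, \eta_+]$ with $\xi := \eta_+ - \eta_- \neq 0$, observe that since $\al$ is constantly $1/2$ on $\F^h$ while $\nabla \al \equiv h$ throughout $\F^h$ by Corollary~\ref{alpha diffbar}, differentiating $\al$ along the segment gives $\la \xi, h \ra = 0$, so $\xi$ is perpendicular to the irrational direction $h$.

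Pick $v \in \M^h = \M_{\eta_-} = \M_{\eta_+}$ and, via Theorem~\ref{J(u)}, weak KAM solutions $u_\pm \prec L_{\eta_\pm}$ with $v \in \J(u_\pm)$. The lifted function $\tilde f := (u_+ - u_-) \circ p$ on $\R^2$ is $\Z^2$-periodic and bounded, and the calibration identity of Proposition~\ref{calibration}, integrated along the lift $\tilde c_v$ with $\al(\eta_\pm)=1/2$, yields
\[
\tilde f(\tilde c_v(T)) - \tilde f(\tilde c_v(0)) = \int_0^T \bigl[L_{\eta_+}(\dot c_v) - L_{\eta_-}(\dot c_v)\bigr]\, dt = -\la \xi, \tilde c_v(T) - \tilde c_v(0)\ra.
\]
Minimality of $dp(R) = \M^h$ in $S\T$ (Proposition~\ref{eigenschaften M}(ii), Corollary~\ref{M mather-menge}(i)) supplies $T_n \to \infty$ and $z_n \in \Z^2$, $\|z_n\| \to \infty$, with $\tilde c_v(T_n) - z_n \to \tilde c_v(0)$. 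The $\Z^2$-periodicity of $\tilde f$ makes the left-hand side at $T_n$ tend to $0$, whence $\la \xi, z_n\ra \to 0$.

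In the sub-case $\pi(\M^h) = \T$ the proof finishes cleanly: every point of $\T$ is calibrated for both $u_\pm$, so $u_\pm$ are differentiable everywhere with $du_\pm = \LL_L(\dot c) - \eta_\pm$ (Proposition~\ref{calibration}), yielding $d(u_+ - u_-) = -\xi$ on all of $\T$; being the differential of a function on $\T$, this 1-form has vanishing periods, yet $\int_\gamma(-\xi) = -\la \xi, [\gamma]\ra \neq 0$ for some $[\gamma]\in\Z^2$ (since $\xi\neq 0$), a contradiction. The main obstacle is the complementary sub-case in which $\pi(\M^h)$ is nowhere dense (Denjoy-type), where the cohomological argument is unavailable and the bare convergence $\la \xi, z_n\ra \to 0$ with $z_n$ lying close to the irrational line $\R h$ yields no Diophantine contradiction on its own. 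To handle it I would establish unique ergodicity of the minimal flow $\phi^t|_{\M^h}$ by a cross-section argument reducing to Denjoy's theorem for irrational circle homeomorphisms; uniqueness of the Mather measure $\mu$ then pins down $\beta(h)=A_L(\mu)$, and via the duality identity $\la \eta, h \ra = \beta(h) + 1/2$ valid for every $\eta \in \F^h$ the set $\F^h$ is confined to a single affine line in $H^1(\T,\R)$---which, combined with the convex $C^1$-regularity of $\al$ (Corollary~\ref{alpha diffbar}) and strict convexity of $\beta$, collapses $\F^h$ to a point.
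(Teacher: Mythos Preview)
Your treatment of the sub-case $\pi(\M^h)=\T$ is fine and is in spirit the same as the paper's endgame: once $du_+ + \eta_+ = du_- + \eta_-$ holds everywhere, $u_+-u_-$ differs from the linear function $\la \eta_- - \eta_+,\cdot\ra$ by a constant, and periodicity forces $\eta_+=\eta_-$.

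The nowhere-dense sub-case, however, has a genuine gap. Your fallback is circular. The identity $\la\eta,h\ra=\beta(h)+\al(\eta)=\beta(h)+\tfrac12$ is just Fenchel--Young equality and holds for \emph{every} $\eta\in\partial\beta(h)$ automatically; it neither requires nor uses unique ergodicity. Thus $\F^h$ always equals the intersection of the affine line $\{\la\cdot,h\ra=\beta(h)+\tfrac12\}$ with $H_F$, and the whole question is precisely whether this intersection is a point or a segment. Neither the $C^1$-smoothness of $\al$ (Corollary~\ref{alpha diffbar}) nor the strict convexity of $\beta$ excludes a flat arc of $H_F$: a $C^1$ convex function may very well have an interval where the gradient is constant, and strict convexity of $\beta$ is \emph{equivalent} to differentiability of $\al$, which you are already using. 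So the concluding sentence proves nothing new, and the argument in this sub-case is incomplete.

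The key idea you are missing is to get the derivative identity $du_0+\eta_0=du_1+\eta_1$ at \emph{every} point (a.e.), not only on $\pi(\M^h)$. The paper achieves this by taking $u_0,u_1$ to be \emph{backward weak KAM solutions}: then through every $x\in\T$ there is a curve $\gamma_x:(-\infty,0]\to\T$ calibrated for $u_i$. Since any such backward ray lies in a single gap of $\pi(\M^h)$ and is, by Corollary~\ref{M mather-menge}(ii), asymptotic in $-\infty$ to the bounding curves from $\M^h$, the $(u_0,\eta_0)$-calibrated ray and the $(u_1,\eta_1)$-calibrated ray through the same point must coincide (they cannot cross and share the same asymptotics). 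Proposition~\ref{calibration} then gives $(du_i+\eta_i)(x)=\LL_L(\dot\gamma_x(0))$ independently of $i$ wherever both $du_i$ exist, hence a.e.\ on $\T$. From here your own periodicity-versus-linear-growth argument finishes the proof uniformly, with no case distinction needed.
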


\begin{proof}
Let $\eta_0,\eta_1\in\F^h$ and $u_i\prec L_{\eta_i}$ be backward weak KAM solutions. By theorem \ref{J(u)}, the curves from $\M^h=\M_{\eta_i}$ belong to the $(u_i,\eta_i)$-calibrated curves for both $i=0,1$. Let $\gamma_0:(-\infty,0]\to\R^2$ be a $(u_0,\eta_0)$-calibrated curve. By definition for $t<0$ there is a $(u_1,\eta_1)$-calibrated curve $\gamma_1:(-\infty,0]\to \T$ with $\gamma_1(0)=\gamma_0(t)$. Both $\gamma_i$ lie in one common gap of $\pi(\M^h)$ and thus by corollary \ref{M mather-menge} are asymptotic in $-\infty$ (they also have the same rotation vector $h$). This shows that $\gamma_1$ is just a reparametrisation of $\gamma_0$ (they cannot intersect transversely by minimality in $\R^2$). Hence, using the continuity of $u_1,t\mapsto A_{L_{\eta_1}}(\gamma_0[a,t])$, $\gamma_0$ is also $(u_1,\eta_1)$-calibrated on $(-\infty,0]$, so both $u_i$ have the same calibrated curves on $(-\infty,0]$. Let $U\subset\R^2$ be the set, where both $u_i$ are differentiable (this is a set of full Lebesgue measure by Rademacher's theorem). By 4.13.2 in \cite{fathi}, $du_i$ are continuous in $U$. For $x$ in $U$ choose a calibrated curve $\gamma_x:(-\infty,0]\to\R^2$ with $\gamma_x(0)=x$. With proposition \ref{calibration} we have
\[ (du_i+\eta_i)(x) = \lim_{t\nearrow 0} (d u_i + \eta_i)(\gamma_x(t)) = \lim_{t\nearrow 0} \LL_L (\dot\gamma_x(t)) = \LL_L (\dot\gamma_x(0)) \]
independently of $i=0,1$, so $u_1+\eta_1 - (u_0+\eta_0)=:c$ is constant on $\R^2$ (the derivative vanishes a.e. observing $\eta_i=d\eta_i$). This shows that
\[ u_1-u_0-c = \eta_0-\eta_1. \]
The left hand side of this equation is periodic, hence bounded and the right hand side is a linear function on $\R^2$. Therefore $\eta_0-\eta_1$ has to vanish.
\end{proof}

\begin{cor} \label{semistatic => static}
For irrational $h$ the $\nabla\beta(h)$-semistatics are calibrated on $\R$ for all $u\prec L_{\nabla\beta(h)}$, i.e. $\NN_{\nabla\beta(h)} = \J(u)$.
\end{cor}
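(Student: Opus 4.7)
The plan is to reduce the calibration assertion to the dynamical closure property ``every $\eta$-semistatic is static'' via a short domination argument, and then to establish the latter using the structure of the minimal set $R$ obtained in the irrational case in propositions \ref{eigenschaften M} and \ref{M eindeutig}. Throughout set $\eta:=\nabla\beta(h)$.

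\emph{From staticness to calibration.} The inclusion $\J(u)\subset\NN_\eta$ is immediate from theorem \ref{J(u)}. For the reverse, suppose $c_v$ is \emph{static}, meaning
\[ \Phi_{L_\eta}(c_v(a),c_v(b))+\Phi_{L_\eta}(c_v(b),c_v(a))=0 \qquad \text{for all } a\leq b. \]
Applying the domination $u\prec L_\eta$ to the pairs $(c_v(a),c_v(b))$ and $(c_v(b),c_v(a))$ and adding yields
\[ 0 \;=\; u|_{c_v(a)}^{c_v(b)} + u|_{c_v(b)}^{c_v(a)} \;\leq\; \Phi_{L_\eta}(c_v(a),c_v(b)) + \Phi_{L_\eta}(c_v(b),c_v(a)) \;=\; 0, \]
so both domination inequalities must be equalities. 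Combined with the semistatic identity $A_{L_\eta}(c_v[a,b])=\Phi_{L_\eta}(c_v(a),c_v(b))$, this gives $u|_{c_v(a)}^{c_v(b)}=A_{L_\eta}(c_v[a,b])$, i.e.\ $v\in\J(u)$ for every $u\prec L_\eta$.

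\emph{Staticness by closing along the minimal set.} It remains to construct, for any $v\in\NN_\eta$ and any $a<b$, approximating curves $\gamma_n$ from $c_v(b)$ to $c_v(a)$ in $\T$ with $A_{L_\eta}(\gamma_n)\to -A_{L_\eta}(c_v[a,b])$; the reverse inequality $\Phi_{L_\eta}(c_v(b),c_v(a))\geq -A_{L_\eta}(c_v[a,b])$ is automatic from the non-negativity of the $L_\eta$-action on closed loops. By corollary \ref{M mather-menge}, the lifted $c_v$ is asymptotic at $\pm\infty$ to recurrent orbits $c_\pm\in R$, and by proposition \ref{eigenschaften M}(ii) the projection $dp(R)\subset S\T$ is $\phi^t$-minimal. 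I assemble $\gamma_n$ as a concatenation of: a short transition from $c_v(b)$ onto a lift of $c_+$; a long forward arc of $c_+$ chosen by minimality so that its endpoint comes arbitrarily close in $S\T$ to $\dot c_-(0)$; a long forward arc along the corresponding $\Z^2$-translate of $c_-$ ending near $c_v(a)$; and a final short transition closing up. The short transitions have vanishing length and hence vanishing action, while the long semistatic arcs have action controlled by observation \ref{<eta,rho>=1}.

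\emph{Main obstacle.} The delicate part is showing that the sum of the semistatic-arc actions converges precisely to $-A_{L_\eta}(c_v[a,b])$. The a priori bound $|A_{L_\eta}|\leq C$ from observation \ref{<eta,rho>=1} only controls the contributions to within a global constant. The crux is that, thanks to the irrationality of $h$, the minimal set $dp(R)$ admits returns of arbitrarily small $S\T$-discrepancy at the closing step; combined with the lower semi-continuity of the action (proposition \ref{A semi-cont}), this forces the total $L_\eta$-action of the closed loop $c_v[a,b]*\gamma_n$ (which is $\geq 0$ as a loop in $\T$) to tend to $0$. This is the same mechanism that underlies the uniqueness proof in proposition \ref{beta diffbar irrational}.
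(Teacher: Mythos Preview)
Your reduction to staticness is correct and standard. The closing construction, however, has two real problems.

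First, the ``short transitions'' from $c_v(b)$ onto $c_+$ and from $c_-$ back to $c_v(a)$ are not short: $a,b$ are fixed, while the asymptoticity of $c_v$ to $c_\pm$ holds only as $t\to\pm\infty$, so $c_v(b)$ need not be anywhere near $c_+(\R)$. You must insert genuine semistatic arcs $c_v[b,b+T_n]$ and $c_v[a-T_n',a]$ around the jumps.

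Second --- and this is the substantive gap --- the mechanism you cite for the loop action tending to $0$ does not work. Lower semi-continuity of $A$ (proposition \ref{A semi-cont}) yields \emph{lower} bounds on limiting actions, not the upper bound you need here, and proposition \ref{beta diffbar irrational} involves no loop-closing at all: it compares gradients of two weak KAM solutions along common calibrated curves. What would actually close your argument, after the repair above, is the Lipschitz continuity of $\Phi_{L_\eta}$ together with the staticness of the Mather orbit itself: the loop decomposes (up to $O(\epsilon_n)$) into two semistatic arcs whose endpoints in $\T$ are $\epsilon_n$-close to points $x_n,y_n$ on a curve from $\M^h$, so its action is $\epsilon_n$-close to $\Phi_{L_\eta}(x_n,y_n)+\Phi_{L_\eta}(y_n,x_n)=0$. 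None of this is in your sketch.

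The paper's route is shorter and avoids staticness of $c_v$ as an intermediate step. It fixes one geodesic $c\in\M^h$ asymptotic to $c_v$ at both ends (corollary \ref{M mather-menge}), picks sequences with $c(s_n),c_v(\tilde s_n)\to x$ and $c(t_n),c_v(\tilde t_n)\to y$ in $\T$, and sandwiches
\[
u|_x^y \;\leq\; \lim_n A_{L_\eta}(c_v[\tilde s_n,\tilde t_n]) \;=\; \Phi_{L_\eta}(x,y) \;=\; \lim_n A_{L_\eta}(c[s_n,t_n]) \;=\; u|_x^y,
\]
the last equality holding because $c\in\M^h\subset\J(u)$ for \emph{every} $u\prec L_\eta$ (theorem \ref{J(u)}). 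Since the left inequality splits into three domination estimates on $[\tilde s_n,a],[a,b],[b,\tilde t_n]$, the forced equality gives $u\circ c_v|_a^b=A_{L_\eta}(c_v[a,b])$ directly --- no return curves are ever built.
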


\begin{proof}
Let $\eta=\nabla\beta(h)$, $v\in\NN_\eta$ and $c$ be a geodesic from $\M^h$ closest to $c_v$. Choose $u\prec L_\eta$ and sequences $s_n,\tilde s_n\to-\infty, t_n,\tilde t_n\to +\infty$, s.th. on $\T$ we have convergence $\lim c(s_n),\lim c_v(\tilde s_n) = x, \lim c(t_n)=\lim c_v(\tilde t_n) = y$ (this is possible, since $c,c_v$ are asymptotic). We have with $\M^h\subset\J(u)$ that
\begin{align*}
 u|_x^y & = u\circ c_v|_a^b + \lim_{n\to \infty} u\circ c_v|_{\tilde s_n}^a + u\circ c_v|_b^{\tilde t_n} \\
& \leq A_{L_\eta}(c_v[a,b])  + \lim_{n\to \infty} A_{L_\eta}(c_v[\tilde s_n,a]) + A_{L_\eta}(c_v[b,\tilde t_n])  \\
& = \lim_{n\to \infty} A_{L_\eta}(c_v[\tilde s_n,\tilde t_n]) = \Phi_{L_\eta}(x,y)  = \lim_{n\to\infty} A_{L_\eta}(c[s_n,t_n])  = u|_x^y,
\end{align*}
i.e. by $u\prec L_\eta$ we obtain $u\circ c_v|_a^b = A_{L_\eta}(c_v[a,b])$ for all $a\leq b$.
\end{proof}

We summarize our results for irrational $h\in G_F$.

\begin{thm} \label{irrational directions}
Let $h\in G_F$ have irrational slope. Then: \begin{itemize}
\item $\beta$ is differentiable in $h$, in particular, there is only one \Mane set corresponding to $h$.
\item The \Mane set $\NN_{\nabla\beta(h)}$ is equal to the \emph{Aubry set} $\A_{\nabla\beta(h)}:=\cap_{u\prec L_{\nabla\beta(h)}} \J(u)$ and in particular a Lipschitz graph over $0_\T$.
\item The Mather set $\M^h=\M_{\nabla\beta(h)}$ is minimal for $\phi^t$ and is equal to the set of recurrent points in $\NN_{\nabla\beta(h)}$.
\item $\pi(\M^h)\subset\T$ is either all of $\T$ or nowhere dense.
\item For $x\in \R^2$ there are two (unique) geodesics $c_-,c_+$ from $\M^h$ closest to $x$, s.th. $x$ lies in the strip between $c_\pm$ and $c_\pm$ are asymptotic in $-\infty$ and in $\infty$. In particular, any orbit in $\NN_{\nabla\beta(h)}$ is homoclinic to $\M^h$.
\end{itemize}\end{thm}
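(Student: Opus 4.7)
The plan is essentially organizational: this theorem summarizes the results proved earlier in the subsection, and only one minor observation is needed beyond direct citation. First I would extract the differentiability of $\beta$ at $h$ from Proposition \ref{beta diffbar irrational}; this makes $\F^h = \partial\beta(h)$ a singleton $\{\eta\}$ with $\eta := \nabla\beta(h)$, so there is a unique \Mane set $\NN_\eta$ corresponding to $h$. Corollary \ref{semistatic => static} then gives $\NN_\eta = \J(u)$ for every $u \prec L_\eta$, so intersecting over all such $u$ yields $\NN_\eta = \A_\eta$; the Lipschitz graph property is part of Theorem \ref{J(u)}.

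Next I would assemble the Mather set statements. Corollary \ref{alpha diffbar} gives $\M_\eta = \M^{\nabla\al(\eta)}$, and Proposition \ref{mather dual} together with $\eta \in \partial\beta(h)$ forces $\nabla\al(\eta) = h$, so $\M^h = \M_\eta$. The minimality of $dp(R)$ under $\phi^t$ and its coincidence with the set of recurrent vectors in $\NN_\eta$ are Proposition \ref{eigenschaften M}(ii) and Corollary \ref{M mather-menge}(i); Proposition \ref{M eindeutig} ensures that $R = R(v)$ does not depend on $v \in \NN_\eta$, so these statements transfer directly to $\M^h$.

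For the dichotomy, Proposition \ref{eigenschaften M}(iii) says $\pi(R) \subset \R^2$ is either $\R^2$ or has empty interior; since $\pi(R)$ is closed and $\Z^2$-periodic, this descends to $\pi(\M^h) \subset \T$ being all of $\T$ or nowhere dense. The existence of the unique bounding geodesics $c_\pm \in \M^h$ closest to a given $x \in \R^2$, asymptotic to one another at both ends, is Corollary \ref{M mather-menge}(ii). For the final homoclinic claim, I would observe that any $v \in \NN_\eta \setminus \M^h$ has $c_v(\R)$ disjoint from $\pi(R)$ in $\R^2$: otherwise the Lipschitz graph property of $\J(u) \supset R$ together with the $\phi^t$-invariance of $R$ would force $v \in dp(R) = \M^h$. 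Hence $c_v$ lies in a single gap bounded by two curves $c_\pm \in \M^h$ that are asymptotic to each other in both $\pm\infty$, and the trapping forces both limit sets of $v$ to lie in $\M^h$. I see no genuine obstacle here; the content is purely a consolidation of already established results.
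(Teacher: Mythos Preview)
Your proposal is correct and matches the paper's approach: the theorem is stated as a summary with no separate proof, and your organization of the citations (Proposition \ref{beta diffbar irrational}, Corollary \ref{semistatic => static}, Theorem \ref{J(u)}, Proposition \ref{eigenschaften M}, Proposition \ref{M eindeutig}, Corollary \ref{M mather-menge}) is exactly the intended reading. The short additional argument you give for the homoclinic claim is the natural one and is implicit in Corollary \ref{M mather-menge}(ii) together with the graph property.
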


\subsection{Rational directions}

The following theorem is an analogue of the classification of orbits for circle homeomorphisms with rational rotation number. The general ideas in the proof have been present for a long time, cf. for instance theorems 10 and 14 in the classical paper of Morse \cite{morse}.

\begin{thm} \label{rational directions}
Let $h\in G_F$ have rational slope and $\F^h=[\eta_-,\eta_+]$.\begin{itemize}
\item[(i)] All $c_v$ with $v\in\NN_\eta, \eta\in\F^h$ are either prime-periodic or heteroclinic connections between two neighboring periodic minimizers.
\item[(ii)] $\M^h$ consists of the periodic minimizers:
\begin{align*}
\M^h & = \left \{ v\in\cup_{\eta\in\F^h}\NN_\eta: \text{ $c_v$ periodic } \right \}.
\end{align*}
\item[(iii)] Either $\pi(\M^h)=\T$ or in each gap between two neighboring periodic minimizers, there exist geodesics $c_\pm$ with $\dot c_\pm\in\NN_{\eta_\pm}$, s.th. $c_\pm$ are heteroclinics between the periodics and in $\R^2$, $c_{v_+}$ approaches the lower (w.r.t. $<$) periodic minimal in $-\infty$ and the upper in $+\infty$; $c_-$ has the opposite behavior.
\end{itemize}\end{thm}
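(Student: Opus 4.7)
The strategy is to reduce the rational case to a comparison with prime-periodic minimizers, in direct analogy with the rotation-number theory for circle homeomorphisms with rational rotation, but using the graph property of $\J(u)$ in place of the circle structure.

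\emph{Step 1: periodic minimizers are in $\M^h$.} Write $h = \lambda z_0$ with $z_0 \in \Z^2$ primitive and $\lambda > 0$. Hedlund's theorem \ref{hedlund} supplies prime-periodic arc-length minimizers $c \in z_0$, and with the observations in \ref{<eta,rho>=1} the invariant probability measure on $\dot c(\R)$ has rotation vector $h$ for every $\eta \in \F^h$ (since $\la \eta, \rho(c) \ra = 1$ fixes the scale). Because $A_{L_\eta}(c) = 0$ on every periodic $\eta$-semistatic, these measures minimize the action among measures of rotation vector $h$, so $\dot c \subset \M^h$. This proves the $\supset$ direction of (ii) and provides the basic family against which every $c_v$ will be compared.

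\emph{Step 2: framing by a strip.} Given $v \in \NN_\eta$ with $\eta \in \F^h$, choose $u \prec L_\eta$ with $v \in \J(u)$; by Theorem \ref{J(u)} both $c_v$ and every $c \in \M^h$ lift to a common Lipschitz graph, so in $\R^2$ their images are pairwise disjoint or equal. Theorem \ref{existence rot vector} gives $\rho(c_v) = h$, which is parallel to $z_0$. Using the order from Definition \ref{ordering} let $c_-, c_+$ be the two closest translates (from the countable family of $\tau_{k z_0}$-orbits of periodic minimizers in $\M^h$) satisfying $c_- \le c_v \le c_+$; if $c_v$ coincides with either then it is prime-periodic and lies in $\M^h$, so we may assume $c_- < c_v < c_+$ in the strict sense. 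The strip $S \subset \R^2$ bounded by $c_\pm$ is invariant under $\tau_{z_0}$ and projects to a compact annulus in $\T$.

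\emph{Step 3: heteroclinic conclusion for (i) and (ii).} Since $c_v$ is an arc-length minimizer trapped in the strip $S$ with rotation direction $z_0 / \|z_0\|$, the same area/compactness and graph arguments used in Lemma \ref{h irrational} and Proposition \ref{M eindeutig} — applied now to translates $\tau_{k z_0} c_v$ and the two boundary periodics — force $\tau_{k z_0} c_v \to c_+$ (up to subsequences) as $k \to \infty$ and analogously to $c_-$ or $c_+$ as $k \to -\infty$; the graph property promotes subsequential limits to honest asymptotics of $c_v$ to translates of $c_\pm$ at $t = \pm \infty$. This establishes (i) and shows that any recurrent orbit in $\bigcup_{\eta \in \F^h} \NN_\eta$ must equal one of the boundary periodics, completing (ii).

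\emph{Step 4: the dichotomy and the role of $\eta_\pm$ for (iii).} If $\pi(\M^h) \neq \T$ there is a non-trivial gap between neighbours $c_- < c_+$. Pick $x$ in this gap in $\T$ and let $u_\pm$ be backward weak KAM solutions for $L_{\eta_\pm}$; the calibrated curve $\gamma_x^\pm$ through $x$ has rotation vector $h$ by the semicontinuity in Proposition \ref{N semi-cont} combined with Theorem \ref{existence rot vector}, and by Step 3 is asymptotic to $c_-$ or $c_+$ in $-\infty$. The crucial point is that choosing $\eta = \eta_+$ (the upper endpoint of $\F^h$ in the oriented circle $H_F$) selects precisely the orientation in which $\gamma_x^+$ exits the gap upward at $+\infty$: were the opposite true, one could tilt $\eta_+$ slightly further along $H_F$ while keeping $\gamma_x^+$ as a minimizer, contradicting that $\eta_+$ is already at the extreme of $\F^h = \partial \beta(h)$. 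A symmetric argument with $\eta_-$ and a forward weak KAM solution produces the opposite heteroclinic, giving (iii). The main obstacle I anticipate is this last bookkeeping step — matching the two endpoints $\eta_\pm$ of $\F^h$ with the two heteroclinic orientations requires a careful use of the supporting-hyperplane description of $\F^h = \partial \beta(h)$ and the geometry of the circle $H_F$, and is where the non-reversibility of $F$ genuinely enters the argument.
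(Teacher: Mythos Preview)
Your Steps 1--3 handling parts (i) and (ii) are essentially the paper's argument. One small misdirection: in Step 3 you invoke the irrational-case machinery (Lemma \ref{h irrational}, Proposition \ref{M eindeutig}), but the rational argument is more direct. Because $\tau c_v(\R)$ and $c_v(\R)$ are disjoint (graph property) and $\tau$ preserves the order $<$, the family $\{\tau^k c_v(\R)\}_{k\in\Z}$ is \emph{monotone} in $<$ and trapped in the strip; hence the limits $\lim_{k\to\pm\infty}\tau^k c_v$ exist without passing to subsequences, are $\tau$-invariant, and are therefore periodic. This is exactly how the paper proves (i); (ii) then follows from recurrence as you say.

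The real gap is Step 4. Your tilting argument fails: if you push $\eta$ past $\eta_+$ along $H_F$ to some $\eta'$, Corollary \ref{alpha C^1} gives $\nabla\alpha(\eta') \neq h$, so every $\eta'$-semistatic has rotation vector $\neq h$. Thus $\gamma_x^+$, which has rotation vector $h$, cannot be $\eta'$-semistatic \emph{regardless of its heteroclinic orientation}, and there is no contradiction to extract. More basically, nothing in the construction ``backward weak KAM solution for $\eta_+$, calibrated curve through $x$'' selects an orientation: $\NN_{\eta_+}$ is the union $\bigcup_{u\prec L_{\eta_+}}\J(u)$, and there is no a priori reason the particular $u_+$ you picked cannot carry a $-$-type heteroclinic in that gap. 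So you have produced \emph{some} heteroclinic in $\NN_{\eta_+}$, but not the $+$-type one the theorem asserts; you yourself flag this as the ``main obstacle'', and indeed it is not just bookkeeping.

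The paper establishes (iii) by approximation instead. Take $h_i\searrow h$ in $G_F$ and $v_i\in\M^{h_i}$. By the monotonicity of $\rho:H_F\to G_F$ (Corollary \ref{alpha C^1}) the associated cohomologies converge to $\eta_+$, so Proposition \ref{N semi-cont} places any limit $v_+$ in $\NN_{\eta_+}$. The orientation is then forced geometrically: since $h_i$ is strictly steeper than $h$, each lifted $c_{v_i}$ must traverse the strip $S(q_0,q_1)$ from the $q_0$-side to the $q_1$-side; normalizing the parameter at the first upward crossing of a fixed $\tau$-periodic curve $\gamma\subset\Int S$ pins the limit heteroclinic to approach $q_0$ at $-\infty$ and $q_1$ at $+\infty$. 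Taking $h_i\nearrow h$ gives $c_-\in\NN_{\eta_-}$ symmetrically. It is this passage through nearby rotation vectors, not any internal property of $\eta_\pm$ alone, that matches the endpoints of $\F^h$ to the two heteroclinic types.
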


\begin{bemerk}\label{bem rational directions}
In proposition \ref{beta diffbar irrational}, we proved that $\beta$ is always differentiable in irrational directions. This is not true in rational directions, and in fact we have for rational $h$, that
\[ \# \partial\beta(h) = 1 \quad \iff \quad \pi(\M^h) = \T, \]
i.e. $\beta$ is differentiable in rational $h\in G_F$ iff $\T$ is foliated by periodic minimals of direction $h$. For this result cf. proposition 1 in \cite{massart sorrentino}.
\end{bemerk}

\begin{proof}
(i). By the graph property in theorem \ref{J(u)}, an $\eta$-semistatic $c$ lifted to $\R^2$ cannot intersect its translates or the lifted periodic minimals (they belong to $\M^h\subset \J(u)$ for any $u\prec L_\eta,\eta\in\F^h$ by theorem \ref{hedlund}). Let $\tau$ be the prime translation associated to $h$ and assume $\tau c(\R)\neq c(\R)$. As periodic minimizers are prime-periodic we obtain family $\{\tau^k c(\R)\}_{k\in\Z}$ of disjoint curves in $\R^2$, ordered w.r.t. $<$ and contained in the strip between to periodic minimizers. Hence we have limits (geodesics) $q_0=\lim_{k\to +\infty}\tau^k c$ and $q_1=\lim_{k\to -\infty}\tau^k c$. Obviously $\tau q_i=q_i$, so (i) follows.

(ii) Let $P$ be set of periodic geodesics in $\cup_{\eta\in\F^h}\NN_\eta$, then we know $P\subset \M^h$ by theorem \ref{hedlund}. If $v\in \M^h$, then $v\in \NN_\eta$ by $\M^h=\M_\eta$ for any $\eta\in\F^h$. But because $v$ lies in the support of some $\mu\in\MM(L)$, $v$ is recurrent and, since by (i) the only recurrent geodesics in $\M^h$ are periodic, $c_v$ is periodic (if there is only one periodic geodesic, we move to a finite cover of $\T$, so heteroclinics are not recurrent), hence $v\in P$.

(iii) We have to construct the heteroclinic $c_+$. Let $q_0<q_1$ be neighboring periodics in $\R^2$ from $\M^h$ (i.e. there is no other curve $q_2$ from $\M^h$ with $q_0<q_2<q_1$). Take some sequence $h_i\searrow h$ in $G_F$ and $v_i\in \M^{h_i}$. The curves $c_{v_i}$ in $\R^2$ are steeper than the $q_i$ and thus have to run through the strip $S$ between $q_0,q_1$ in $\R^2$ and after shifting the parameters and applying suitable translations, we have $\pi v_i$ in some compact set $K\subset\Int S$. With proposition \ref{N semi-cont} and corollary \ref{alpha C^1} we obtain a limit $v_+\in\NN_{\eta_+}$ with $\pi v_+\in K$. By (i) and the assumption that $q_0,q_1$ are neighbors, $c_+:=c_{v_+}$ has to be some heteroclinic and we have to show that it has the right asymptotic behavior. Put some $\tau$-periodic curve $\gamma$ into $\Int S$. Then the $c_{v_i}$ intersect $\gamma$ for the first time $t_i$ ''upwards'', i.e. $c(-\infty,t_i]$ is contained in the closed strip below $\gamma$. Now fix $K\subset\Int(S)$, s.th. $K$ contains a segment of $\gamma$ projecting surjectively to $p(\gamma)\subset\T$. Shift the times $t_i$ to $0$ and apply some translate $\tau$ to have $\tau \pi v_i\in K\cap \gamma$. We still get a heteroclinic limit, but now with the prescibed behavior.
\end{proof}

\subsection{The proof of theorem II}

Every $\eta$-semistatic $c:\R\to\T$ lifts to a minimal geodesic $\tilde c:\R\to\R^2$. Conversely we show here that every arc-length minimal geodesic $\tilde c:\R\to\R^2$ projects to an $\eta$-semistatic for some $\eta \in H_F$.

\begin{prop}\label{asymptotic directions min rays}
Let $c:[a_0,\infty)\to\R^2$ be a minimal geodesic ray. Then the asymptotic direction $\delta^+(c)\in S^1$ exists.
\end{prop}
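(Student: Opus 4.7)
The plan is to mimic the proof of Theorem \ref{existence rot vector}, where the existence of $\delta^+$ was established for an $\eta$-semistatic, observing that that argument invoked only minimality of the curve together with Theorem \ref{hedlund}, and not the semistatic property in an essential way. First I would verify that $\|c(t)\|\to\infty$: by minimality, $t-a_0=l_F(c[a_0,t])=d_F(c(a_0),c(t))$, and by Lemma \ref{c_F}, $d_F(c(a_0),c(t))\leq c_F\|c(t)-c(a_0)\|$, whence $\|c(t)\|\to\infty$.

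Next, suppose for contradiction that $\delta^+(c)$ does not exist. The set of accumulation points of $c(t)/\|c(t)\|$ in $S^1$ is then closed and, by continuity of $c$ combined with $\|c(t)\|\to\infty$, connected; hence an arc containing at least two points. Picking $v_1\neq v_2$ in this arc close enough, one may choose $z\in\Z^2\setminus\{0\}$ whose direction lies strictly between $v_1$ and $v_2$ along the short arc in $S^1$ and such that both $\la v_1,z\ra$ and $\la v_2,z\ra$ are positive. Fix disjoint open cones $C_1,C_2$ around $\R_{>0}v_j$ and sequences $s_i<t_i<s_{i+1}\to\infty$ with $c(s_i)\in C_1$, $c(t_i)\in C_2$, $\|c(s_i)\|,\|c(t_i)\|\to\infty$. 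By Theorem \ref{hedlund}, there is a periodic minimal $c_0$ in the homotopy class $z\in\pi_1(\T)$, whose lift $\tilde c_0:\R\to\R^2$ is a bi-infinite minimal satisfying $\tilde c_0(t+T)=\tilde c_0(t)+z$ and lying at bounded Euclidean distance from an affine line parallel to $\R z$; the translates $\tau_w\tilde c_0$ with $w\in\Z^2/\Z z$ form a family of disjoint minimal curves in $\R^2$, each within bounded distance of a line parallel to $\R z$.

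Because $c$ oscillates between $C_1$ and $C_2$ at distances tending to infinity, it must cross infinitely many of these translates. Since $\la v_1,z\ra,\la v_2,z\ra>0$, consecutive $C_1$-visits of $c$ occur at $z$-coordinates tending to $+\infty$, so for a well-chosen translate $\tau_w\tilde c_0$, the crossings of $c$ with $\tau_w\tilde c_0$ during the $C_1\to C_2$ legs of distinct oscillations happen at monotonically increasing parameter values along $\tilde c_0$. This produces a successive intersection of $c$ with $\tau_w\tilde c_0$, contradicting the no-successive-intersection property of minimal geodesics recalled in Section \ref{section def}. The main obstacle I anticipate is precisely this last step: rigorously extracting a successive intersection requires careful bookkeeping of parameter values along $\tilde c_0$ at each crossing and of the sign of $\la \dot c,z^\perp\ra$ there. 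This is exactly the point glossed over in the proof of Theorem \ref{existence rot vector}, and a complete argument proceeds by varying the translate $\tau_w\tilde c_0$ over $w\in\Z^2/\Z z$ and using the monotonicity forced by $\la v_1,z\ra,\la v_2,z\ra>0$ to produce the required ordering of intersection parameters.
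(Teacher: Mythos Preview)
Your proposal is correct and follows essentially the same route as the paper: the paper's proof of this proposition is a one-line reference back to Theorem \ref{existence rot vector}, observing that the existence of $\delta^+$ there used only $l_F$-minimality in $\R^2$, which is exactly what you spell out. Your extra care in choosing $z$ with $\la v_1,z\ra,\la v_2,z\ra>0$ to force the intersection parameters along $\tilde c_0$ to drift to $+\infty$ is a clean way to resolve the bookkeeping you flag as an obstacle; the paper leaves this implicit both here and in Theorem \ref{existence rot vector}.
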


\begin{bemerk}\label{bem asymptotic directions min rays}
In fact, there exists a global constant $D=D(F)\geq 0$, s.th. $c[a_0,\infty)$ has distance $\leq D$ from the straight line $c(a_0)+\R\delta^+(c)$. The proof in the Riemannian case is due to Hedlund (\cite{hedlund}, lemma 7.1), but it carries over to the Finsler case, cf. pp. 438f in \cite{zaustinsky}.
\end{bemerk}

\begin{proof}
In the proof of theorem \ref{existence rot vector}, we used only the minimality of $c$ w.r.t. $l_F$ in $\R^2$, so the proof carries over to minimal rays.
\end{proof}

Using $\delta^+(c)$, we can associate a homology class to minimal rays $c:[a_0,\infty)\to\T$ by setting
\[ \tilde\rho(c) := \R_{\geq 0} \delta^+(c) \cap G_F. \]
Note that $\tilde \rho(c)$ might a priori be different from $\rho^+(c)$, if $\rho^+(c)$ exists at all. However, we have the following result, showing that all statements about semistatic geodesics carry over to minimal geodesics.

\begin{prop}\label{minimals semistatic}
Let $F$ be Finsler metric on $\T$, $c:[a_0,\infty)\to \T$ be an arc-length minimal ray and $h:= \tilde\rho(c)\in G_F$. Then there exists $\eta\in\F^h$ and $u\prec L_\eta$, such that $c$ is calibrated on $[a_0,\infty)$ w.r.t. $u,\eta$. In particular, if $c$ is minimal on $\R$, then $\dot c\in \J(u) \subset \NN_\eta$. The analogous result holds for rays $c:(-\infty,a_0]\to\T$.
\end{prop}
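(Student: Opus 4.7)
Set $h = \tilde\rho(c) \in G_F$ and choose $\eta \in \F^h$. The first step, a direct consequence of proposition \ref{F=1 optimal}(ii) combined with the $l_F$-minimality of $c$ in $\R^2$, is that lifted to $\R^2$ the curve $c$ minimizes $A_{L_\eta}$ among all absolutely continuous competitors with the same endpoints, and
\[ A_{L_\eta}(c[a,b]) = d_F(c(a),c(b)) - \la\eta, c(b)-c(a)\ra. \]
The duality $\la\eta, h\ra = 1$ for $\eta \in \F^h$ (from $\al|_{H_F} = 1/2$, $\beta(h) = 1/2$, and the Fenchel identity) together with the narrow-funnel estimate in remark \ref{bem asymptotic directions min rays} keeps the quantity $A_{L_\eta}(c[a_0,T])$ uniformly bounded in $T$.

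To construct the calibrating function $u$, pick $T_n \to \infty$ and define
\[ u_n(x) := A_{L_\eta}(c[a_0, T_n]) - \Phi_{L_\eta}(x, p(c(T_n))), \qquad x \in \T. \]
Each $u_n$ is uniformly Lipschitz in $x$, is dominated by $L_\eta$ (triangle inequality for $\Phi_{L_\eta}$), and is uniformly bounded on $\T$ because $\Phi_{L_\eta}$ is bounded on $\T \times \T$ (as noted after definition \ref{def semistatic}) and $A_{L_\eta}(c[a_0, T_n])$ is bounded by Step 1. Arzel\`a--Ascoli yields, along a subsequence, a uniform limit $u \prec L_\eta$, and using $\Phi_{L_\eta}(p(c(a)), p(c(T_n))) \leq A_{L_\eta}(c[a, T_n])$ one obtains $u(p(c(a))) \geq A_{L_\eta}(c[a_0, a])$ in the limit.

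The core task is the calibration identity $u(p(c(b))) - u(p(c(a))) = A_{L_\eta}(c[a,b])$ for $a_0 \leq a \leq b$. The ``$\leq$'' direction is automatic from domination; for ``$\geq$'' I consider the non-negative \emph{defect}
\[ \epsilon_n(a) := A_{L_\eta}(c[a, T_n]) - \Phi_{L_\eta}(p(c(a)), p(c(T_n))), \]
which by a concatenation argument with the triangle inequality is non-increasing in $a \in [a_0, T_n]$. Along a diagonal subsequence $\epsilon_n(a) \to \epsilon(a) \geq 0$, with $\epsilon$ non-increasing, and the formula $u(p(c(a))) = A_{L_\eta}(c[a_0, a]) + \epsilon(a)$ reduces calibration to the constancy of $\epsilon$ on $[a_0, \infty)$.

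The main obstacle is establishing this constancy. If $\epsilon$ strictly decreased on some $[a_0, b_0]$, then for large $n$ one would find a shortcut $\gamma_n$ from $p(c(a_0))$ to $p(c(T_n))$ on $\T$ with $A_{L_\eta}(\gamma_n) < A_{L_\eta}(c[a_0, T_n]) - \delta$ for a fixed $\delta > 0$. Lifting $\gamma_n$ to $\R^2$ yields a curve ending at $c(T_n) + z_n$ for some $z_n \in \Z^2$; by splicing in a suitable number of copies of a periodic $\eta_n$-semistatic $q_n$ of rational rotation vector $h_n \to h$ (existence of $q_n$ comes from theorem \ref{hedlund}, convergence $\eta_n \to \eta \in \F^h$ from proposition \ref{N semi-cont} and corollary \ref{alpha C^1}) and using the asymptotic direction of $c$ from proposition \ref{asymptotic directions min rays} to match the $\R^2$-endpoints at $c(a_0)$ and $c(T_n)$, one produces an $A_{L_\eta}$-competitor in $\R^2$ with strictly smaller action than $c[a_0, T_n]$, contradicting Step 1. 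Once calibration is proved, the in-particular clause follows from theorem \ref{J(u)} when $c$ extends to a bi-infinite minimal, and the backward-ray case $c:(-\infty, a_0] \to \T$ follows by the symmetric construction with $T_n \to -\infty$.
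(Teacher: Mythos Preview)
Your Busemann-type construction of $u$ as a subsequential limit of $u_n(x)=A_{L_\eta}(c[a_0,T_n])-\Phi_{L_\eta}(x,p(c(T_n)))$ and the reduction of calibration to the constancy of the defect $\epsilon(a)$ are correct, but the argument you give for constancy has a genuine gap. The $\T$-shortcut $\gamma_n$ realising $\Phi_{L_\eta}(p(c(a_0)),p(c(T_n)))$ can be taken of \emph{bounded} length, so its lift ends at $c(T_n)+z_n$ with $|z_n|$ of order $|c(T_n)-c(a_0)|\sim T_n$. To return from $c(T_n)+z_n$ to $c(T_n)$ you propose splicing in copies of a periodic $\eta_n$-minimizer $q_n$ with $h_n\to h$; but one period of $q_n$ contributes $A_{L_\eta}$-action $\la\eta_n-\eta,z'_n\ra$, and summing over $\sim|z_n|/|z'_n|$ periods gives a total error $\la\eta_n-\eta,-z_n\ra$, which you do not control. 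Even in the rational case where one can take $h_n=h$ and this term vanishes, the two end-connections (from the end of the lifted $\gamma_n$ to $q$, and from $q$ to $c(T_n)$) cost a fixed $O(1)$ in $A_{L_\eta}$, and nothing forces your gain $\delta$ to exceed this. There is also a hidden dependence on the choice of $\eta\in\F^h$: in the rational case the ray $c$ is asymptotic in $+\infty$ to one specific neighbouring periodic $q_i$, and only the matching endpoint $\eta\in\{\eta_-,\eta_+\}$ of $\F^h$ yields a $u$ for which $c$ is calibrated; for a generic $\eta$ in the interior of $\F^h$ the defect $\epsilon$ is \emph{not} constant, so no contradiction is available and your opening ``choose $\eta\in\F^h$'' is too casual.

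The paper's proof avoids these quantitative issues by a geometric route that is quite different from yours. It first shows, using approximation of $\M^h$ from both sides (lemma \ref{mather sets approximierbar}), that the lifted ray cannot cross any curve of $\pi(\M^h)$ and therefore lies in a single gap and is asymptotic in $+\infty$ to a specific Mather curve $q_0$. It then chooses $\eta$ according to this asymptotic side and takes a \emph{pre-existing} forward weak KAM solution $u\prec L_\eta$ possessing a forward-calibrated curve with the same forward asymptotics. If $c$ failed to be $(u,\eta)$-calibrated at some $c(a)$, the forward $(u,\eta)$-calibrated curve issued from $c(a)$ would branch off from $c$ yet share its forward limit, producing successive intersections of two $l_F$-minimal curves in $\R^2$ --- a contradiction. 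Thus the paper never needs to compare $\T$-minimality against $\R^2$-minimality quantitatively; the intersection theory of minimal geodesics does the work.
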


We need the following basic observation, which shows that we can always approximate the Mather sets $\M^h$ ''from both sides''.

\begin{lemma}\label{mather sets approximierbar}
For any $h\in G_F, v\in\M^h$ there are sequences $h_n^\pm \in G_F$ with $h_n^-\nearrow h, h_n^+ \searrow h$ (in the cyclic order of $G_F$) and $v_n^\pm\in\M^{h_n^\pm}$ with $v_n^\pm\to v$.
\end{lemma}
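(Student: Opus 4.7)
The plan is to combine Corollary~\ref{alpha C^1} with a weak-$*$ compactness argument for Mather measures, and then to upgrade the resulting convergence to the specific $v$ using the Lipschitz-graph structure of the M\^an\'e set. I treat the side $h_n^+\searrow h$; the other sign is symmetric.

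Write $\F^h=[\eta_-,\eta_+]\subset H_F$. Since the map $\rho:H_F\to G_F,\,\eta\mapsto\nabla\al(\eta)$, is continuous, surjective and monotone of degree one (Corollary~\ref{alpha C^1}), any sequence $h_n^+\searrow h$ in $G_F$ lifts to a sequence $\eta_n^+\in\F^{h_n^+}$, and after passing to a subsequence, monotonicity forces $\eta_n^+\searrow\eta_+$ in $H_F$. For each $n$ pick an ergodic Mather measure $\mu_n\in\MM^{h_n^+}$. The energy level $\{L_F=1/2\}$ is compact, so by weak-$*$ compactness of invariant Borel probability measures supported therein, after a further subsequence $\mu_n\to\mu_\infty$. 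Invariance is preserved in the limit; linearity of the rotation vector and continuity of Mather's $\beta$-function (Proposition~\ref{mather dual}) give $\rho(\mu_\infty)=h$ and $\int L\,d\mu_\infty=\beta(h)$, whence $\mu_\infty\in\MM^h$ and $\emptyset\neq\supp\mu_\infty\subset\M^h$. On the other hand, Proposition~\ref{N semi-cont} yields $\supp\mu_\infty\subset\NN_{\eta_+}$, and, more generally, the Hausdorff upper limit $K:=\limsup\supp\mu_n$ is a $\phi^t$-invariant compact set with $K\subset\NN_{\eta_+}$ and $\emptyset\neq K\cap\M^h$.

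It remains to hit the prescribed $v\in\M^h$. In the irrational case, Theorem~\ref{irrational directions} says $\M^h$ is minimal for $\phi^t$, so the non-empty, closed, $\phi^t$-invariant set $K$ already contains $\M^h\ni v$, and a diagonal extraction yields $v_n^+\in\supp\mu_n\subset\M^{h_n^+}$ with $v_n^+\to v$. In the rational case, Theorem~\ref{rational directions} decomposes $\M^h$ into a disjoint union of prime-periodic minimizers, and the above argument only forces $K$ to contain \emph{some} one of them, not necessarily the one through $v$. To handle this, I would use that $v$ is recurrent (as a point in the support of an invariant measure) together with the $\Z^2$-symmetry: lifting to $\R^2$, choose deck translations $\tau_{z_n}$ that align an orbit of $\M^{h_n^+}$ with $\tilde c_v$ near $t=0$, with $z_n/T_n\to 0$ so that the shifted rotation vector remains $h_n^+$. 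The uniform Lipschitz-graph estimate of Theorem~\ref{J(u)}, applied to a compact neighborhood of $\eta_+$ in $H^1(\T,\R)$, then forces the aligned vectors to converge in $T\T$ to the unique element of $\NN_{\eta_+}$ over $\pi v$, which is $v$ itself since $v\in\M_{\eta_+}\subset\J(u)$ for every $u\prec L_{\eta_+}$.

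The main obstacle is exactly the rational case: the abstract compactness argument latches onto \emph{a} periodic minimizer in $\M^h$ but cannot distinguish between the several parallel ones lying in different gaps of $\pi(\M^h)$. Overcoming it requires the explicit geometric alignment in the universal cover described above, using the uniform Lipschitz-graph estimate of Theorem~\ref{J(u)} and the $\Z^2$-action to pick out the orbit through the prescribed $v$.
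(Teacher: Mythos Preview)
Your argument is essentially correct, but it is more roundabout than the paper's and contains one confused step. The paper avoids the weak-$*$ compactness machinery entirely and treats the \emph{rational} case first, because it is the easier one: since $c_v$ is periodic and any $h_n^+\neq h$ points in a different direction, every lifted orbit from $\M^{h_n^+}$ must (after a suitable $\Z^2$-translate) cross $\tilde c_v(\R)$; a time-shift along the flow then places $\pi(v_n^+)$ on the compact arc $c_v[0,T]\subset\T$. Extracting a convergent subsequence and invoking Proposition~\ref{N semi-cont} gives a limit $w\in\NN_\eta$ with $\pi w\in c_v[0,T]$; since $\M^h\subset\J(u)$ for every $u\prec L_\eta$, over points of $\pi(\M^h)$ the whole \Mane set coincides with $\M^h$, so $w=\dot c_v(t_0)$ for some $t_0$, and one finishes by applying $\phi^{-t_0}$. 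The irrational case is then deduced by the same footpoint-on-$c_v(\R)$ trick combined with a diagonal argument using the minimality of $\M^h$. Your measure-theoretic first step is thus unnecessary overhead, though your use of minimality in the irrational case is the same as the paper's.

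The confused step is your condition ``$z_n/T_n\to 0$ so that the shifted rotation vector remains $h_n^+$'': deck translations $\tau_{z_n}$ project to the identity on $\T$ and never change the rotation vector, so this condition is vacuous. What actually moves $v_n^+$ inside $T\T$ is the time-shift along the flow, not the deck translation. Relatedly, the uniform Lipschitz constant from Theorem~\ref{J(u)} is not the operative tool for identifying the limit; what you need is simply that over any $x\in\pi(\M^h)$ one has $\NN_\eta\cap\pi^{-1}(x)=\M^h\cap\pi^{-1}(x)$, which follows from $\M^h\subset\J(u)$ for all $u$ together with the graph property of each $\J(u)$.
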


\begin{proof}[Proof of \ref{mather sets approximierbar}]
First, let $h$ be rational, then $c_v$ is a periodic orbit with some period $T>0$. Taking any $h_n^\pm, v_n^\pm\in\M^{h_n^\pm}$, shifting the parameter of $c_{v_n^\pm}$ and applying a suitable translate $\tau_z$, s.th. $\pi(v_n^\pm)\in c_v[0,T]$ (this is possible since w.l.o.g. $h_n^\pm\notin \R h$), we obtain a convergent subsequence $v_n^\pm \to \dot c_v(t_0)\in\M^h$ for some $t_0\in[0,T]$ by $\pi^{-1}(x)\cap \NN_\eta=\pi^{-1}(x)\cap \M^h$ for any $\eta\in \F^h, x\in\pi(\M^h)$ and using the semi-continuity of $\NN_\eta$, cf. proposition \ref{N semi-cont}. The claim follows after applying $\phi^{-t_0}$.

Now let $h$ be irrational. The above arguments show that we obtain a limit in $\M^h$ from $\M^{h_n^\pm}$ (since the footpoints project into the compact set $\pi(\M^h)\subset \T$, if $\pi(v_n^\pm)\in c_v(\R)$ in $\R^2$). By the minimality of $\M^h$, $v$ itself is such a limit (use a diagonal argument).
\end{proof}

\begin{proof}[Proof of \ref{minimals semistatic}]
Write $c$ also for a lift $c:\R\to\R^2$.

\underline{Step 1} ($c$ cannot cross any geodesics in $\pi(\M^h)\subset\R^2$). Suppose $c(a_0)\in\pi(\M^h)$, but $\dot c(a_0) \neq v := \pi^{-1}(c(a_0))\cap\M^h$ and let $v_n^\pm\to v$ from lemma \ref{mather sets approximierbar}. From the asymptotic behavior of $c, c_{v_n^\pm}$ and $v\neq \dot c(a_0)$, we obtain successive intersections of either $c$ and $c_{v_n^-}$ or $c$ and $c_{v_n^+}$ for large $n$, contradicting the minimality of both curves w.r.t. $l_F$.

\underline{Step 2} (there is a geodesic $q_0:\R\to\R^2$ from $\M^h$ and $t_n\to\infty$, s.th. in $\R^2$ the distances $d(c(t_n),q_0(\R))$ tend to $0$). By proposition 2 in Mather's paper \cite{mather}, there is a geodesic $q_0$ from $\M^h$ and a sequence $t_n\to\infty$, s.th. in $\T$ we have $d(c(t_n),q_0(\R))\to 0$. But by step 1, $c$ is contained in a fixed gap of $\pi(\M^h)\subset \R^2$, so the claim follows (in fact, due to the special structure of $\M^h$ in the irrational case, we apply Mather's result only in the rational case).

\underline{Step 3} ($c$ is calibrated). In the rational case let e.g. $q_0 < q_1$ be neighboring minimizers, s.th. $c$ is contained in the strip between $q_0,q_1$ and $\eta=\eta_-\in \F^h=[\eta_-,\eta_+]$. Take some forward weak KAM solution $u\prec L_\eta$ having a curve $c_0$ between $q_0,q_1$ as a calibrated curve, such that $c_0$ is asymptotic in $+\infty$ to $q_0$ (cf. theorem \ref{rational directions} (iii)). In the irrational case, take any forward weak KAM solution $u\prec L_\eta$ for $\eta=\nabla\beta(h)$.

Now, if $c$ is not $(u,\eta)$-calibrated on $[a_0,\infty)$, there exists a $(u,\eta)$-calibrated curve $\gamma:[0,\infty)\to\T$ with $\gamma(0)=c(a)$ and $\dot\gamma(0)\neq \dot c(a)$ for some $a>a_0$. By our previous results we obtain $\lim_{t\to\infty} d(\gamma(t),c_0(\R)) = 0$, hence there is a sequence $s_n\to \infty$ with $d(c(t_n),\gamma(s_n))\to 0$. This contradicts the minimality of both curves w.r.t. $l_F$.
\end{proof}

\begin{proof}[Proof of theorem II]
Combine theorems \ref{irrational directions} and \ref{rational directions}, remark \ref{bem asymptotic directions min rays} and proposition \ref{minimals semistatic}. The only thing that is left open is the Lipschitz property of the two sets $\G_\delta\cup\G_\delta^\pm$, which we will prove in proposition \ref{M^pm} and remark \ref{bem M^pm}.
\end{proof}


\section{Topological entropy and invariant tori, proof of theorem I}\label{top ent and invar tori}

We fix again a Finsler metric $F$ on $\T$ with geodesic flow $\phi^t:S\T\to S\T$. Theorem I claims the existence of $\phi^t$-invariant graphs in $S\T$ in the case of $\h(\phi^t,S\T)=0$. We shall prove it in two steps: \begin{itemize}
\item[(1.)] Show that $\h(\phi^t,S\T)=0$ implies the existence of invariant graphs $\TT_h^0$ in $S\T$ for rotation vectors $h\in G_F$ with rational slope, such that for all $v\in\TT_h^0$ we have $\rho(c_v)=h$. 
\item[(2.)] Take limits of the $\TT_h^0$ to obtain invariant graphs for all $h\in G_F$.
\end{itemize}

In subsections \ref{section J}, \ref{multibump} and \ref{gap and entropy} we make the first step, the proof of the main theorem (step 2) is contained in subsection \ref{step 2}. The methods in subsections \ref{section J} and \ref{multibump} are similar to the techniques of Bosetto and Serra \cite{bosetto serra}, cf. also \cite{rabinowitz2}, while the mentioned articles work in the more restricted setting of non-autonomous Tonelli Lagrangian systems with one degree of freedom. Before Bosetto and Serra, Rabinowitz and Bolotin used similar techniques, cf. \cite{rabinowitz1}, \cite{rabinowitz}.

\subsection{The gap-condition}\label{section J}

The invariant tori $\TT_h^0$ for $h\in G_F$ with rational slope will consist of the periodic minimizers $\M^h$ together with heteroclinic orbits from $\NN_\eta, \eta\in\F^h$ between neighboring periodic minimizers (recall theorem \ref{rational directions}). We work in the universal cover $\R^2$ and introduce some notation.

\begin{defn}\label{def N^pm}
Let $h\in G_F$ have rational slope and let $q_0,q_1:\R\to\R^2$ be neighboring periodic minimizers from $\M^h$ with $q_0<q_1$ w.r.t. the ordering $<$ in definition \ref{ordering}. Write $S=S(q_0,q_1)\subset \R^2$ for the closed strip between $q_0,q_1$ and define
\begin{align*}
& \Om = \Om_h(q_0,q_1) := \{ c\in C_{loc}^{ac}(\R, S(q_0,q_1)) : \la h , c(t)\ra\to \pm\infty, t\to \pm\infty \}, \\
& c(\pm \infty)=q_i \quad : \iff \quad d(c(t), q_i(\R)) \to 0, ~ t\to \pm\infty, \\
& \Om^\pm = \Om_h^\pm(q_0,q_1) := \{ c\in \Om_h(q_0,q_1) : c(\mp\infty) = q_0, c(\pm\infty) = q_1 \} .
\end{align*}
We denote the set of heteroclinics between $q_0,q_1$ by
\[ \NN^\pm = \NN_h^\pm(q_0,q_1) := \left \{ v\in \bigcup_{\eta\in \F^h} \NN_\eta ~:~  c_v \in \Om_h^\pm(q_0,q_1) \right \}. \]
\end{defn}

We wish to build the $\TT_h^0$ from $\M^h$ together with one of the sets $\NN_h^\pm(q_0,q_1)$ between neighboring minimizers $q_0,q_1$. Of course, this is not always possible. Observe that
\[ \pi(\NN_h^\pm(q_0,q_1)) \subset \Int S(q_0,q_1). \]

\begin{defn}\label{def gap}
We say that $F$ fulfills the \emph{gap-condition}, if there are a rational $h\in G_F$ and neighbors $q_0<q_1$ from $\M^h$, such that
\[ \pi(\NN_h^-(q_0,q_1))\neq \Int S(q_0,q_1) \quad \text{and} \quad \pi(\NN_h^+(q_0,q_1))\neq \Int S(q_0,q_1). \]
If $F$ does not fulfill the gap-condition, denote by $\TT_h^0$ a closed invariant set in $S\T$ built by taking $\M^h$ and by choosing for any two neighboring minimizers $q_0,q_1$ from $\M^h$ one of the sets $\NN_h^\pm(q_0,q_1)$, that has $\pi(\NN_h^\pm(q_0,q_1)) = \Int S(q_0,q_1)$.
\end{defn}

\begin{bemerk}
\begin{itemize}
\item[(i)] It follows from the definition, that $\pi(\TT_h^0) = \R^2$.

\item[(ii)] We will show that the gap-condition implies positive topological entropy of the geodesic flow in $S\T$. Intuitively, if $q_0,q_1$ are hyperbolic neighboring periodic minimizers, the gap-condition corresponds to a transverse intersection of stable and unstable manifolds, cf. theorem 5.3 in \cite{bosetto serra}. Note that the intersection of these stable and unstable manifolds is never empty by the existence of heteroclinics between $q_0,q_1$. Also we mention the connection to Katok's result \cite{katok1}, stating that $\h(\phi^t,S\T)>0$ is equivalent to the existence of a horse shoe $\Lambda\subset S\T$.

\item[(iii)] If $c:\R\to\T$ is a contractible closed geodesic, then in $S\T$ there are no invariant graphs for the geodesic flow and the gap-condition holds: for such $c$ the curve of velocity vectors $\dot c:\R\to S\R^2\cong \R^2\times S^1$ of the lifted $c$ is closed, non-contractible and would break through any lifted invariant graph.
\end{itemize}
\end{bemerk}

{\bf Notation.} For subsections \ref{section J} and \ref{multibump} we fix a rational $h\in G_F$ and neighboring minimizers $q_0<q_1$ from $\M^h$ and suppress $h,q_0,q_1$ in the notation. Moreover let $\theta>0$ be the common prime period of the $q_i$ and $\tau= \tau_z$ the prime translation with $z\in \R_{> 0} h \cap \Z^2$. \vspace*{6pt}

In order to understand the gap-condition, one has to study the heteroclinics in $\NN^\pm$ in greater detail. The goal for the rest of this subsection is to define an asymptotic action-functional $J:\Om^\pm\to \R$ that characterizes the heteroclinic semistatics in $\Om^\pm$. This should be completely analogous to ascribing to a periodic curve $c:[0,T]\to\T$ its Lagrangian action $A_{L_\eta}(c)$, where $A_{L_\eta}(c)=0$ was equivalent to $c$ being minimal.

\begin{defn}\label{J-def}
Set
\begin{align*}
& A := A_{L_F-h_0+1/2} : C^{ac}([a,b],\R^2)\to \R, \quad  h_0 := h / \|h\|^2.
\end{align*}
For a curve $c\in \Om$ with $c(-\infty)=q_i,c(\infty)=q_j$ we say that a pair of sequences of real numbers $(s_n,t_n)$ is a \emph{$J$-sequence}, if
\[ s_n\to-\infty, ~ t_n\to \infty, \quad  d(c(s_n), q_i(-n\theta)), ~ d(c(t_n), q_j(n\theta))  ~ \to 0 \]
and define the \emph{asymptotic action $J(c)$} by
\[ J(c) := \lim_{n\to\infty} A(c[s_n,t_n]). \]
\end{defn}

We will show in a moment that $J$ is well-defined.

\begin{bemerk}\label{bem J-def}\begin{itemize}
\item[(i)] From $\la \eta, h \ra = 1$ for $\eta\in \F^h$ we have $\eta_0 := \eta-h_0 \perp h$, so the function $\hat\eta_0 := \la \eta_0, . \ra:\R^2\to\R$ is $\tau$-invariant and bounded in $S$.

\item[(ii)] Note that for curves $c\in\Om$ with known asymptotic behavior, there always exist $J$-sequences.

\item[(iii)] We have again $A(q_i[0,\theta])=0$ and for any $\tau^k$-periodic curve $c$ we have $A(c)\geq 0$ by the definition of \mane's critical value and $\tau$-invariance of $\hat\eta_0$. Moreover one easily shows, using the definition of \mane's critical value and the bound for $\hat\eta_0$ in $S$, that
\[ \exists B=B(F,h)\in\R : \qquad A(c[a,b]) \geq B \quad \forall c\in\Om, a\leq b. \]
From this we also have
\[ J(c) \in [B,\infty]. \]

\item[(iv)] A similar functional $J$ is considered in \cite{rabinowitz}. See also \cite{bosetto serra}.
\end{itemize}\end{bemerk}

We will frequently use the following. Recall $\frac{1}{c_F}\norm \leq F \leq c_F \norm$ by lemma \ref{c_F}.

\begin{defn}\label{def b}
For $x,y\in \R^2$ we write $\es_{x,y}(t)=x+t\frac{y-x}{d(x,y)},t\in [0,d(x,y)]$ for the euclidean straight segment from $x$ to $y$. Set
\[ b := \frac{c_F^2+1}{2} + \|h_0\| . \]
\end{defn}

\begin{bemerk}
The following calculation will be useful several times:
\begin{align*}
\left | A(\es_{x,y} ) \right | \leq \left | \int_0^{d(x,y)} \frac{1}{2}F^2(\dot\es_{x,y}) + \frac{1}{2} dt \right | + \|h_0\| \cdot d(x,y) \leq b \cdot d(x,y).
\end{align*}
\end{bemerk}

\begin{lemma}\label{J well-def}
$J(c)$ is well-defined, i.e. the limit exists and is independent of the choice of the $J$-sequence $(s_n,t_n)$.
\end{lemma}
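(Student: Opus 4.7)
The plan is to establish existence of the limit for any fixed $J$-sequence, and then to deduce independence of the chosen sequence by interleaving.

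For existence, I would fix a $J$-sequence $(s_n, t_n)$ and prove a one-sided \emph{tail estimate}: for $m < n$ large, the end pieces $c[s_n, s_m]$ and $c[t_m, t_n]$ contribute asymptotically non-negatively to $A$. Starting from $L_F = \tfrac{1}{2}F^2$, the elementary inequality $L_F(v) + \tfrac{1}{2} \geq F(v)$ yields
\[ A(c[a,b]) \geq l_F(c[a,b]) - \langle h_0, c(b) - c(a)\rangle \geq d_F(c(a), c(b)) - \langle h_0, c(b) - c(a)\rangle. \]
Applied on $[s_n, s_m]$, two facts produce a near-cancellation at leading order: first, $q_i$ is a minimal arc-length geodesic in $\R^2$, so $d_F(q_i(-n\theta), q_i(-m\theta)) = (n-m)\theta$; second, the identity $\langle h_0, z\rangle = \theta$, obtained from $\langle \eta, h\rangle = 1$ for $\eta \in \F^h$ together with $\eta - h_0 = \eta_0 \perp h$ and $z = \theta h$, gives $\langle h_0, q_i(-m\theta) - q_i(-n\theta)\rangle = (n-m)\theta$. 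Combined with the defining property of a $J$-sequence and the Lipschitz comparison of lemma \ref{c_F}, this produces $A(c[s_n, s_m]) \geq -\delta(n,m)$ with $\delta(n,m) \to 0$ as $n, m \to \infty$, and symmetrically $A(c[t_m, t_n]) \geq -\delta(n,m)$ on the forward side.

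Splitting $A(c[s_n, t_n]) = A(c[s_n, s_m]) + A(c[s_m, t_m]) + A(c[t_m, t_n])$ and applying the tail bounds gives $A(c[s_n, t_n]) \geq A(c[s_m, t_m]) - 2\delta(n,m)$. Taking $\liminf_n$ for fixed $m$, then $\limsup_m$, yields $\liminf_n A(c[s_n, t_n]) \geq \limsup_m A(c[s_m, t_m])$, so the limit exists in $[B, +\infty]$, using the global lower bound $B$ from remark \ref{bem J-def}(iii).

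For independence, given two $J$-sequences $(s_n, t_n)$ and $(s'_n, t'_n)$, I would interleave them into a new sequence $(s''_n, t''_n)$ equal to the first at odd indices and to the second at even indices. The index-matched condition $d(c(s''_n), q_i(-n\theta)) \to 0$ is inherited from both constituent sequences, so $(s''_n, t''_n)$ is itself a $J$-sequence and, by the previous step, $A(c[s''_n, t''_n])$ converges. Since odd- and even-indexed subsequences of a convergent numerical sequence share its common limit, this value must simultaneously equal $\lim A(c[s_n, t_n])$ and $\lim A(c[s'_n, t'_n])$.

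The main obstacle is the tight cancellation in the tail estimate: both $d_F(c(s_n), c(s_m))$ and $\langle h_0, c(s_m) - c(s_n)\rangle$ are equal to $(n-m)\theta$ to leading order, and only the precise identity $\langle h_0, z\rangle = \theta$ makes $A$ asymptotically non-negative on the tails. No upper bound on $A(c[s_n, s_m])$ is available or needed — $c$ is merely absolutely continuous and may take arbitrarily long detours, which is why $J(c) = +\infty$ must be allowed — but the one-sided bound is precisely what a $\liminf$/$\limsup$ comparison, and the subsequent interleaving step, require.
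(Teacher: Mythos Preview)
Your proof is correct and rests on the same core idea as the paper's: the tail contributions $A(c[s_n,s_m])$ and $A(c[t_m,t_n])$ are bounded below by quantities tending to zero, which forces $\liminf \geq \limsup$. The execution differs in two places. For the tail bound, the paper invokes the $A$-minimality of $q_i$ via a competitor curve, comparing $A$ of $\es_{q_i(-m\theta),c(s_m)} * c|_{[s_m,\tilde s_n]} * \es_{c(\tilde s_n),q_i(-n\theta)}$ against $A(q_i[-m\theta,-n\theta])=0$; you instead use the pointwise inequality $L_F+\tfrac12\geq F$ to pass to $d_F$ and then appeal to the $l_F$-minimality of $q_i$ together with $\langle h_0,z\rangle=\theta$. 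These are equivalent routes to the same estimate. For independence, the paper runs the two-sequence estimate directly to obtain $J\geq\tilde J$ and $\tilde J\geq J$, while your interleaving trick reduces independence to the already-established existence; the latter is a tidy repackaging once existence is in hand.
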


\begin{bemerk}
It follows directly that $J$ is invariant under $\tau$ and time shifts, i.e. $J(c)=J(\tilde c)$ for $\tilde c(t)=\tau^k c(t+t_0)$.
\end{bemerk}

\begin{proof}
Let $(s_n, t_n),(\tilde s_n,\tilde t_n)$ be two $J$-sequences for $c\in\Om$ with asymptotic limits $c(-\infty)=q_i,c(\infty)=q_j$ and for $n\in \Z$ set
\[ \e_n := \begin{cases} d(q_i(n\theta),c(s_n)) & : n \leq 0 \\ d( q_j(n\theta),c(t_n)) & : n >0 \end{cases}, \quad \tilde\e_n := \begin{cases} d(q_i(n\theta),c(\tilde s_n)) & : n \leq 0 \\ d(q_j(n\theta),c(\tilde t_n)) & : n >0 \end{cases} , \]
so $\e_n,\tilde\e_n\to 0$ for $|n|\to\infty$. For $n,m\in \N$ with $n \leq m$ and $s_m \leq \tilde s_n$ we see by the $A$-minimality of $q_i$ that
\begin{align*}
0 & =  A(q_i[-m\theta, -n \theta]) \leq A(\es_{q_i(-m\theta),c(s_m)} * c|_{[s_m, \tilde s_n]} * \es_{c(\tilde s_n), q_i(-n \theta)}) \\
& \leq b (\e_{-m} + \tilde\e_{-n})  + A(c[s_m, \tilde s_n])
\end{align*}
and analogously for $[\tilde t_n,t_m]$, if $t_m \geq \tilde t_n$, i.e.
\begin{align*}
A(c[s_m, t_m]) - A(c[\tilde s_n, \tilde t_n]) \geq - b (\e_{-m} + \tilde\e_{-n} + \tilde\e_n + \e_m ) .
\end{align*}
If $(s_n,t_n)=(\tilde s_n,\tilde t_n)$, taking certain subsequences $m_k,n_k$ shows that
\[ \liminf A(c[s_n,t_n])=\limsup A(c[s_n,t_n]), \]
i.e. the limit $J(c)$ exists. If $\tilde J(c)$ is the value obtained from taking $(\tilde s_n,\tilde t_n)$ instead of $(s_n,t_n)$ we find by the above arguments that $J(c)\geq \tilde J(c)$. Analogously one shows $\tilde J(c)\geq J(c)$.
\end{proof}

An important property of $J$, as for the Lagrangian action, is semi-continuity.

\begin{prop}[lower semi-continuity of $J$]\label{J semi-cont}
Let $K\subset S$ be a compact set, $c^n\in\Om$ a sequence with $c^n(0)\in K$ for all $n$ and assume
\[ \liminf J(c^n) < \infty. \]
Then there exists a $C^0_{loc}$-convergent subsequence $c^{n_l}$ with a limit $c\in\Om$. Moreover, if all $c^{n_l}$ and $c$ have the same asymptotic behavior, then
\[ J(c) \leq \liminf J(c^{n_l}) . \]
\end{prop}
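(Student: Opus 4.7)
Three stages: (1) extract a $C^0_{loc}$-convergent subsequence via Tonelli compactness, (2) verify the limit lies in $\Om$ using an action--displacement identity, and (3) obtain the semi-continuity by cutting any $J$-sequence at the ends of a fixed $J$-sequence of $c$ and bounding the two tails via the \mane-minimality of $q_i, q_j$.

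\textbf{Step 1: convergent subsequence.} After passing to a subsequence I may assume $J(c^n) \leq M$. For fixed $T > 0$ and any $n$, take a $J$-sequence $(s_K^n, t_K^n)$ of $c^n$; for $K$ large enough that $s_K^n < -T < T < t_K^n$ one has $A(c^n[s_K^n, t_K^n]) \leq M+1$, and applying the lower bound of Remark \ref{bem J-def}(iii) to the two pieces outside $[-T, T]$ yields $A(c^n[-T,T]) \leq M+1-2B$ uniformly in $n$. Since $L_F - h_0 + 1/2$ is again Tonelli, this bound combined with $c^n(0) \in K$ compact confines the $c^n[-T,T]$ to a common compact subset of $\R^2$. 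Proposition \ref{A semi-cont} together with a diagonal argument over $T_m = m$ then yields a subsequence $c^{n_l} \to c$ in $C^0_{loc}(\R, \R^2)$, with $c(\R) \subset S$ by closedness of $S$.

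\textbf{Step 2: $c \in \Om$.} The definition $A = A_{L_F - h_0 + 1/2}$ together with $L_F \geq 0$ gives the identity
\[
\la h, c^n(b) - c^n(a) \ra = \|h\|^2 \bigl( A_{L_F + 1/2}(c^n[a,b]) - A(c^n[a,b]) \bigr) \geq \|h\|^2 \Bigl( \tfrac{b-a}{2} - (M+1-2B) \Bigr),
\]
using $h_0 = h/\|h\|^2$ in the first equality and the Step 1 action bound in the inequality. With $c^n(0) \in K$ compact this forces $\la h, c^n(t) \ra \to \pm\infty$ as $t \to \pm\infty$ uniformly in $n$, a property inherited by the $C^0_{loc}$-limit $c$. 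Hence $c \in \Om$.

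\textbf{Step 3: semi-continuity.} Under the matching-asymptotics hypothesis, say $c(\mp\infty) = q_i,\, c(\pm\infty) = q_j$, fix $\e > 0$ and pick a $J$-sequence $(s_N, t_N)$ of $c$ with $N$ so large that $A(c[s_N, t_N]) > J(c) - \e$ and the distances from $c(s_N), c(t_N)$ to the $q_i,q_j$-orbits are $<\e$. By $C^0_{loc}$-convergence the analogous bound (up to $2\e$) holds for $c^{n_l}$ at $s_N, t_N$ for $l$ large. For each such $l$ take a $J$-sequence $(s_K^l, t_K^l)$ of $c^{n_l}$. The key observation is that the lifted path
\[
\es_{q_i(-K\theta),\, c^{n_l}(s_K^l)} \;*\; c^{n_l}[s_K^l, s_N] \;*\; \es_{c^{n_l}(s_N),\, q_i(-N\theta)}
\]
has endpoints differing by $(K-N)z$ and projects to a closed loop in $\T$ of homology class $(K-N)z$, parallel to $h$. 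Since $\eta - h_0 \perp h$ for every $\eta \in \F^h$, the $A$- and $A_{L_\eta}$-actions of this loop coincide, so the non-negativity $A_{L_\eta} \geq 0$ from the definition of \mane's critical value gives $A(\text{loop}) \geq 0$. Combined with the straight-segment bound $A(\es_{x,y}) \leq b\,d(x,y)$ from Definition \ref{def b}, this yields $A(c^{n_l}[s_K^l, s_N]) \geq -O(\e)$ as $K, l \to \infty$. The analogous argument on the right tail together with $J(c^{n_l}) = \lim_K A(c^{n_l}[s_K^l, t_K^l])$ gives $J(c^{n_l}) \geq A(c^{n_l}[s_N, t_N]) - O(\e)$, and Proposition \ref{A semi-cont} on the compact interval $[s_N, t_N]$ followed by $\e \to 0$ yields $\liminf J(c^{n_l}) \geq J(c)$. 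The tail estimate is the main delicate point: it relies both on matching asymptotics (to close into a loop whose homology is parallel to $h$) and on $q_i, q_j$ being \mane-minimizers for $\F^h$, without which the $A_{L_\eta} \geq 0$ argument would collapse.
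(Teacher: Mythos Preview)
Your proof is correct and follows essentially the same route as the paper: the same Tonelli compactness plus diagonal argument in Step~1, and in Step~3 your closed-loop bound $A(\text{loop})\geq 0$ is exactly the content of the paper's reference back to Lemma~\ref{J well-def} (there the tail is bounded via the $A$-minimality of $q_i$, which amounts to the same inequality since $A(q_i[-K\theta,-N\theta])=0$). Your Step~2 is in fact more explicit than the paper's, which merely asserts $\la h,c(t)\ra\to\pm\infty$ without writing out the action--displacement identity.
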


\begin{proof}
Let $D > \liminf J(c^n)$ and w.l.o.g. $c^n$, such that $J(c^n)\leq D$ for all $n$. Using remark \ref{bem J-def} (iii), we find for all $k\in\N$ that $A(c^n[-k,k])\leq D-2B$. Consider
\[ \Gamma_k := \{ c\in C^{ac}([-k,k], S) : c(0)\in K , A(c[-k,k]) \leq D-2B \}. \]
Estimating the length of $c[-k,k]$ for $c\in \Gamma_k$ using $A(c)\leq D-2B$ one shows that $c[-k,k]$ is contained in a compact set in $K_k\subset\R^2$. Hence, by proposition \ref{A semi-cont} the sets $\Gamma_k$ are compact w.r.t. $C^0$-convergence. Starting with $k=1$, we take convergent subsequences of $c^n|_{[-k,k]}$ and by a diagonal argument obtain a $C^0_{loc}$-convergent subsequence $c^{n_l}$ of $c^n$ with limit $c\in C^{ac}_{loc}(\R,S)$. Moreover, using $J(c)<\infty$, one easily shows that $\la c(t),h \ra \to \pm\infty$ for $t\to\pm\infty$, so $c\in\Om$.

Now assume w.lo.g. $c=\lim c^n$ and that all $c^n,c$ have the same asymptotic behavior, say $c(-\infty)=q_i,c(\infty)=q_j$. Pick $\e,\delta>0$, let $(s_l,t_l),(s_l^n,t_l^n)$ be $J$-sequences for $c,c^n$ and $l_0$ large, s.th. $c(s_{l_0}),c(t_{l_0})$ lie $\delta/2$-close the corresponding limit $q_i,q_j$. For large $n$, the $C^0$-convergence on $[s_{l_0},t_{l_0}]$ of $c^n\to c$ forces $c^n(s_{l_0}),c^n(t_{l_0})$ to lie $\delta$-close to $q_i,q_j$, respectively. Moreover, we can find large $l_n> l_0$ with $c^n(s_{l_n}^n), c^n(t_{l_n}^n)$ also lying $\delta$-close to its limit and $A(c^n[s_{l_n}^n,t_{l_n}^n]) \leq D+\e$. Arguing just as in the proof of lemma \ref{J well-def} we find
\[ D \geq A(c^n[s_{l_n}^n,t_{l_n}^n]) - \e \geq A(c^n[s_{l_0},t_{l_0}]) - 4b \delta - \e. \]
The semi-continuity of $A$ shows $A(c[s_{l_0},t_{l_0}])\leq D+4b\delta+\e$. $\e>0$ is arbitrary in this inequality (it was used to find the $l_n$) and with $l_0\to \infty$, we can take the limit $\delta\to 0$. Hence $J(c)\leq D$, while $D>\liminf J(c^n)$ was arbitrary. 
\end{proof}

We can now characterize the semistatics in $\Om^\pm$ using $J$.

\begin{prop}\label{M^pm}
Set
\[ \om^\pm := \inf\{ J(c) : c\in\Om^\pm \} \in [B,\infty). \]
Then
\[ \forall c\in\Om^\pm : \qquad \dot c\in \NN^\pm \quad \iff \quad J(c)= \om^\pm. \]
\end{prop}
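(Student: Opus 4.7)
The plan is to rewrite $J$ as an asymptotic $A_{L_\eta}$-action so that the equivalence reduces to: heteroclinic semistatics are precisely the asymptotic $L_\eta$-action minimizers over $\Om^\pm$. Fix $\eta\in\F^h$. Since $\la\eta,h\ra=1$, we may write $\eta=h_0+\eta_0$ with $\eta_0\perp h$, and since $z\in\R_{>0}h\cap\Z^2$ we have $\la\eta_0,z\ra=0$. For any absolutely continuous $c$,
\[ A(c[a,b])=A_{L_\eta}(c[a,b])+\la\eta_0,c(b)-c(a)\ra. \]
If $(s_n,t_n)$ is a $J$-sequence for $c\in\Om^\pm$, its endpoints approach translates of $q_0(0),q_1(0)$ by multiples of $\pm nz$, and because $\la\eta_0,z\ra=0$ the cross term converges to $K^\pm_\eta:=\pm\la\eta_0,q_1(0)-q_0(0)\ra$. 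Hence
\[ J(c)=\lim_n A_{L_\eta}(c[s_n,t_n])+K^\pm_\eta \qquad \forall c\in\Om^\pm, \]
so minimising $J$ on $\Om^\pm$ is equivalent to minimising the asymptotic $A_{L_\eta}$-action.

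For the forward direction I would take $c$ with $\dot c\in\NN^+$, so that $c$ is $\eta$-semistatic for some $\eta\in\F^h$ and $A_{L_\eta}(c[s_n,t_n])=\Phi_{L_\eta}(\pi c(s_n),\pi c(t_n))$. For any competitor $\tilde c\in\Om^+$ with $J$-sequence $(\tilde s_n,\tilde t_n)$, the pairs $c(s_n),\tilde c(\tilde s_n)$ both cluster at $q_0(-n\theta)$ and $c(t_n),\tilde c(\tilde t_n)$ both cluster at $q_1(n\theta)$. Bridging by euclidean segments in $\R^2$ and estimating as in the definition of the constant $b$ gives $\Phi_{L_\eta}(\pi c(s_n),\pi\tilde c(\tilde s_n))\to 0$ and similarly at the other end. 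The triangle inequality for $\Phi_{L_\eta}$ then yields
\[ \Phi_{L_\eta}(\pi c(s_n),\pi c(t_n))\leq \Phi_{L_\eta}(\pi\tilde c(\tilde s_n),\pi\tilde c(\tilde t_n))+o(1)\leq A_{L_\eta}(\tilde c[\tilde s_n,\tilde t_n])+o(1), \]
so passing to the limit yields $J(c)\leq J(\tilde c)$, whence $J(c)=\om^+$.

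For the backward direction I would assume $J(c)=\om^+$ and argue by contradiction. Let $\eta_+\in\F^h$ be the cohomology producing the $\Om^+$-heteroclinics via theorem \ref{rational directions} (iii). If $c$ were not $\eta_+$-semistatic, there would exist $a<b$ and a curve $\gamma$ in $\R^2$ with $\gamma(a)=c(a),\gamma(b)=c(b)$ and $A_{L_{\eta_+}}(\gamma)<A_{L_{\eta_+}}(c[a,b])$; by the identity above, the strict inequality persists for $A$. I would form $\hat c$ by replacing $c[a,b]$ with $\gamma$, and if $\gamma$ exits the strip $S$, replace each excursion outside $S$ by the segment of $q_0$ or $q_1$ between the entry and exit points. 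Since $q_0,q_1$ are $\eta_+$-semistatic by theorem \ref{hedlund} and the replacement preserves endpoints, these substitutions do not increase $A_{L_{\eta_+}}$, hence do not increase $A$. The resulting $\hat c$ lies in $\Om^+$, has the same asymptotic behaviour as $c$, and satisfies $A(\hat c[s_n,t_n])<A(c[s_n,t_n])$ for large $n$, giving $J(\hat c)<\om^+$, a contradiction.

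The main obstacle is precisely this backward step: I must ensure the lift of $\gamma$ ends at $c(b)$ in $\R^2$ (so that the $\la\eta_0,\cdot\ra$-correction cancels and the strict improvement survives from $A_{L_{\eta_+}}$ to $A$), and that the excursion-replacement preserves absolute continuity and the $\Om^+$ asymptotic structure without losing the strict inequality. The Finsler analogue of the classical Morse--Hedlund replacement works because $q_0,q_1$ are prime-periodic minimal geodesics, so their segments minimise $A_{L_{\eta_+}}$ between their endpoints, and because $S$ is bounded by $q_0<q_1$, so every excursion has well-defined entry and exit points on the boundary curves where the splicing can be performed.
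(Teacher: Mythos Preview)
Your forward direction is correct and is essentially a \Mane-potential rephrasing of the paper's argument: where the paper fixes $u\prec L_\eta$ with $\dot c_0\in\J(u)$ and uses the lift $\hat u=u\circ p+\hat\eta_0$, you use $\Phi_{L_\eta}$ and its triangle inequality, which plays the same role since $\Phi_{L_\eta}(x_0,\cdot)$ is itself dominated.

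The backward direction, however, has a real gap that you flag but do not close. Being $\eta_+$-semistatic is a condition on $\T$: its failure produces a competitor $\gamma$ in $\T$ with $A_{L_{\eta_+}}(\gamma)<A_{L_{\eta_+}}(c[a,b])$ and endpoints $p(c(a)),p(c(b))$. The lift of $\gamma$ to $\R^2$ starting at $c(a)$ lands at $c(b)+w$ for some $w\in\Z^2$ over which you have no control (and there is no reason the $\Phi_{L_{\eta_+}}$-minimiser should lie in the homotopy class of $c[a,b]$). The correction $\la\eta_0,w\ra$ can have either sign, so the strict inequality for $A_{L_{\eta_+}}$ need not transfer to $A$, and your surgery never starts. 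There is a secondary issue you also pass over: if an excursion of $\gamma$ through $q_1$ exits at $q_1(s_1)$ and re-enters at $q_1(s_2)$ with $s_2<s_1$, the replacing segment would have to run backwards along $q_1$, which is not $l_F$-minimal for a non-reversible $F$.

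The paper avoids all of this by never leaving the domination framework: the single chain
\[
J(c)=A(c[a,b])+\lim\bigl(A(c[s_n,a])+A(c[b,t_n])\bigr)\ \geq\ \hat u\circ c\big|_a^b+\lim\bigl(\hat u\circ c\big|_{s_n}^a+\hat u\circ c\big|_b^{t_n}\bigr)\ =\ \hat u\big|_{q_0(0)}^{q_1(0)}
\]
gives $J(c)\geq\om^+$ with equality iff $c$ is $(u,\eta)$-calibrated on every $[a,b]$, settling both directions at once with no lifting and no surgery. Your own route can be completed in the same spirit: from the forward step you already know $\om^+=\Phi_{L_{\eta_+}}(pq_0(0),pq_1(0))+K^+_{\eta_+}$; if $J(c)=\om^+$, decompose $A_{L_{\eta_+}}(c[s_n,t_n])$ at $a,b$, bound each piece below by $\Phi_{L_{\eta_+}}$, pass to the limit, and compare with the triangle inequality $\Phi_{L_{\eta_+}}(pq_0(0),pq_1(0))\leq\Phi_{L_{\eta_+}}(pq_0(0),pc(a))+\Phi_{L_{\eta_+}}(pc(a),pc(b))+\Phi_{L_{\eta_+}}(pc(b),pq_1(0))$ to force $A_{L_{\eta_+}}(c[a,b])=\Phi_{L_{\eta_+}}(pc(a),pc(b))$. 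This is the paper's proof written with $\Phi_{L_{\eta_+}}$ in place of $u$, and it dispenses with your replacement argument entirely.
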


\begin{proof}
We argue in the setting of $\Om^+$, the case $\Om^-$ is analogous.

Pick a curve $c_0\in\Om^+$ with $\dot c_0\in\NN^+$, some $\eta\in\F^h$ and a dominated function $u\prec L_\eta$ with $\dot c_0\in\J(u)$. With $\hat\eta_0=\la\eta-h_0,.\ra: \R^2\to\R$ as above set
\[ \hat u := u \circ p+\hat\eta_0 : \R^2\to \R, \]
where $p:\R^2\to\T$ is the covering map. Observe that for $c:[a,b]\to\R^2$ we have
\[ \hat u \circ c|_a^b = u \circ pc|_a^b + \hat\eta_0 \circ c|_a^b \leq A_{L_\eta}(pc[a,b]) + \hat\eta_0 \circ c|_a^b = A(c[a,b]) \]
with equality iff $p\circ c$ is $(u,\eta)$-calibrated on $[a,b]$. If $c\in\Om^+$ and $(s_n,t_n)$ is a $J$-sequence for $c$, we obtain using the $\tau$-invariance of $\hat\eta_0$ that for $a\leq b$
\begin{align*}
J(c) & = A(c[a,b]) + \lim \left ( A(c[s_n,a])+A(c[b,t_n]) \right ) \\
& \geq \hat u\circ c|_a^b + \lim \left (\hat u\circ c|_{s_n}^a + \hat u\circ c|_b^{t_n} \right ) = \hat u|_{q_0(0)}^{q_1(0)},
\end{align*}
where equality holds iff $\dot c\in\J(u)$, since $a,b$ were arbitrary. In particular $\om^+ = \hat u|_{q_0(0)}^{q_1(0)}$ by the existence of $c_0$. We proved the following: If $\dot c_0\in\NN^+$, then $J(c_0)=\om^+$. Conversely, we saw that if $J(c)=\om^+$, then $\dot c\in\J(u)\subset \NN^+$.
\end{proof}

\begin{bemerk}\label{bem M^pm}
In the proof we saw that
\[ \NN^\pm = \J(u)\cap \pi^{-1}(\Int S) \]
for any $\eta\in\F^h$ and $u\prec L_\eta$ having just one $c_0\in\Omega^\pm$ as a calibrated curve. In particular the sets $\NN^\pm=\NN_h^\pm(q_0,q_1)\subset S\T$ are contained in two Lipschitz graphs over $0_\T$, the Lipschitz constant depending only on $F$ and hence the closed invariant set $\TT_h^0$ in definition \ref{def gap} is an invariant Lipschitz graph.
\end{bemerk}

We can now concretize the gap-condition. It is equivalent to the case where for some rational $h\in G_F$ there is no invariant graph $\TT_h^0$ as defined in \ref{def gap}. In this case we know two things:
\begin{itemize}
\item[(i)] The periodic minimizers of direction $h$ do not foliate the torus $\T$, i.e. $\pi(\M^h)\neq \T$.
\item[(ii)] There are neighbors $q_0,q_1$ in $\M^h$, such that both $\NN^\pm=\NN_h^\pm(q_0,q_1)$ do not foliate the strip $\Int S = \Int S(q_0,q_1)$, i.e. $\pi(\NN^\pm) \neq \Int S$.
\end{itemize}

\begin{lemma} \label{q_i neighboring}
Let $F$ fulfill the gap-condition and $\delta > 0$ be sufficiently small, such that there are $x,y\in S$ with $d(x,\pi(\M^h))\geq \delta$ and $d(y,\pi(\NN^\pm))\geq \delta$. \begin{itemize}
\item[(i)] Set $\Om_1: = \bigcup_{T>0}\{ c\in C^{ac}([0,T],\R^2) : ~ c(T)=\tau c(0) \}$ and
\[ \om(\delta) := \inf \left \{ A(c) : ~ c\in \Om_1,~ c(0)\in S,~ d(c(0),\pi(\M^h))\geq \delta \right \} \]
Then $\om(\delta)> \om(0)=0$.

\item[(ii)] Set
\[ \om^\pm(\delta) := \inf \left \{ J(c) : ~ c\in \Om^\pm ,~ c(0)\in S, ~ d(c(0),\pi(\NN^\pm)) \geq \delta \right \} \]
Then $\om^\pm(\delta)>\om^\pm(0)=\om^\pm$.
\end{itemize}
\end{lemma}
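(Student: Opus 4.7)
The plan is to prove both parts by contradiction, using the same template: extract a minimizing sequence whose base points at $t = 0$ are uniformly bounded away from the relevant minimizer set, pass to a limit via compactness and semi-continuity, and derive a contradiction by showing the limit is a genuine minimizer whose trajectory lies in the set it was supposed to avoid. The base cases $\om(0) = 0$ and $\om^\pm(0) = \om^\pm$ are immediate: for $\om(0)$, insert $c = q_0|_{[0,\theta]}$ giving $A = 0$ (Remark \ref{bem J-def}(iii)) and use Ma\~n\'e's definition of $c(L)$ together with the $\tau$-invariance of $\hat\eta_0$ to see $A \geq 0$ on $\Om_1$; for $\om^\pm(0) = \om^\pm$, note that $c(0) \in S$ is automatic for $c \in \Om^\pm$.

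For part (i), take $c^n \in \Om_1$ on $[0, T_n]$ with $c^n(0) \in S$, $d(c^n(0), \pi(\M^h)) \geq \delta$, and $A(c^n) \to 0$. From
\[ A(c^n) = \tfrac{1}{2}\int_0^{T_n} F^2(\dot c^n)\,dt + \tfrac{T_n}{2} - \la h_0, z\ra \]
I extract uniform upper bounds on $T_n$ and on $\int F^2(\dot c^n)$. A lower bound on $T_n$ follows from Cauchy--Schwarz: $\int F^2 \geq d_F(c^n(0), \tau c^n(0))^2/T_n$, where the distance to the non-trivial $\tau$-translate is bounded below. Translating each $c^n$ by a suitable $\tau^{k_n}$ places $c^n(0)$ in a fixed compact fundamental region (preserving all distances to the $\tau$-invariant set $\pi(\M^h)$). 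Proposition \ref{A semi-cont} then yields a $C^0$-convergent subsequence with $T_n \to T^* > 0$ and limit $c^* \in \Om_1$ satisfying $A(c^*) \leq 0$. Since $A \geq 0$ on $\Om_1$, $A(c^*) = 0$; Proposition \ref{F=1 optimal}(iii) then makes $c^*$ an $l_F$-minimizer in its homology class, so Theorem \ref{hedlund} gives $\dot c^* \in \M^h$, forcing $c^*(0) \in \pi(\M^h)$ --- contradicting the distance assumption.

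For part (ii), take $c^n \in \Om^\pm$ with $J(c^n) \to \om^\pm$, $c^n(0) \in S$, and $d(c^n(0), \pi(\NN^\pm)) \geq \delta$. Since heteroclinics in $\NN^\pm$ asymptote to $q_0, q_1$, the set $\pi(\NN^\pm) \subset \R^2$ accumulates on $q_0(\R) \cup q_1(\R)$, so the $c^n(0)$ are bounded away from $q_0(\R) \cup q_1(\R)$ as well. Proposition \ref{J semi-cont} yields a $C^0_{loc}$-convergent subsequence with limit $c^* \in \Om$, and $c^*(0) \in \Int S$ is again bounded away from $q_0, q_1$. The main obstacle here is that a priori $c^*$ need not inherit the asymptotic behavior of the $c^n$: it could asymptote to $q_0$ at both ends, with the excursion of $c^n$ toward $q_1$ escaping to $t \to +\infty$ as $n \to \infty$, so Proposition \ref{J semi-cont} cannot be used directly to bound $J(c^*)$.

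To circumvent this, I would argue via approximate calibration. Fix $\eta \in \F^h$ and a dominated function $u \prec L_\eta$ having a calibrated curve in $\Om^\pm$ (guaranteed by Theorem \ref{J(u)} since $\NN^\pm \neq \emptyset$). The computation in the proof of Proposition \ref{M^pm} shows that for every $c \in \Om^\pm$ and $a \leq b$,
\[ 0 \leq A(c[a,b]) - \hat u \circ c|_a^b \leq J(c) - \om^\pm. \]
Applied to $c^n$ with $J(c^n) \to \om^\pm$ and combined with the lower semi-continuity of $A$ (Proposition \ref{A semi-cont}) together with continuity of $\hat u$ under $C^0_{loc}$-convergence, this forces $A(c^*[a,b]) = \hat u \circ c^*|_a^b$ for all $a \leq b$, i.e., $c^*$ is $(u, \eta)$-calibrated on all of $\R$. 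Hence $\dot c^* \in \J(u)$, and by Remark \ref{bem M^pm}, $\J(u) \cap \pi^{-1}(\Int S) = \NN^\pm$; since $c^*(0) \in \Int S$, this gives $\dot c^*(0) \in \NN^\pm$, so $c^*(0) \in \pi(\NN^\pm)$ --- contradicting the distance assumption.
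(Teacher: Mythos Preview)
Your argument for part (i) is correct and essentially the same as the paper's; the paper normalizes to $F(\dot c^n)=1$ first (via Proposition \ref{F=1 optimal}) so that $T_n\to\theta$ directly, whereas you bound $T_n$ above and below separately, but these are equivalent routine maneuvers.

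Your argument for part (ii) is also correct, but takes a genuinely different route. The paper first confines the minimizing sequence to a sub-strip $\overline{S^+}=\overline{\Con_x(\Int S-\pi(\NN^\pm))}$ bounded by two heteroclinics from $\NN^\pm$; whenever $c^n$ leaves $S^+$ it is surgically replaced past the intersection point by the bounding heteroclinic, which does not raise $J(c^n)$. This trapping forces the limit to inherit the correct asymptotic endpoints, so that the full semi-continuity of $J$ in Proposition \ref{J semi-cont} applies and gives $J(c^*)\le\om^\pm$, whence $\dot c^*(0)\in\NN^\pm$ by Proposition \ref{M^pm}. You instead sidestep the asymptotics issue entirely: from the proof of Proposition \ref{M^pm} you extract the pointwise calibration defect bound $0\le A(c[a,b])-\hat u\circ c|_a^b\le J(c)-\om^\pm$, and then pass to the limit termwise using lower semi-continuity of $A$ and continuity of $\hat u$, obtaining a calibrated limit $c^*$ directly and invoking Remark \ref{bem M^pm}. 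Your approach is cleaner in that it avoids the surgery, at the cost of reaching inside the proof (not just the statement) of Proposition \ref{M^pm}; the paper's approach is more self-contained but needs the cut-and-paste step. One small omission: in part (ii) you should, as in part (i), first translate by suitable $\tau^{k_n}$ to place $c^n(0)$ in a fixed compact fundamental region before invoking Proposition \ref{J semi-cont}; this is harmless since $J$, $\Om^\pm$, and $\pi(\NN^\pm)$ are all $\tau$-invariant.
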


\begin{proof}
(i) $\om(\delta)\geq 0$ by the remark \ref{bem J-def} (iii). Suppose $\om(\delta)=0$, so there are $T_n>0, c^n\in C^{ac}_{loc}([0,T_n], \R^2)$ with
\[ c^n(T_n)=\tau c^n(0)\in S, \quad d(c^n(0),\pi(\M^h))\geq \delta, \quad A(c^n)\to 0. \]
W.l.o.g. $F(\dot c^n)=1$ a.e. by proposition \ref{F=1 optimal}, s.th.
\[ T_n - \la h_0, z \ra = T_n - \la h_0, c^n \ra|_0^{T_n} = A(c^n)\to 0 \quad \Rightarrow \quad T_n \to \la h_0, z \ra = \theta. \]
Applying suitable $\tau^k$ we may assume that $c^n(0)\in K$ for some compact $K\subset S$ and changing the parametrisation slightly to obtain $T_n=\theta$ for all $n$, we still have $A(c^n)\to 0$ (one calculates the action of $c^n(\frac{T_n}{\theta} t)$ to be $\frac{1}{2\theta}(T_n^2-\theta^2)\to 0$). With proposition \ref{A semi-cont} we obtain a convergent subsequence of $c^n$ with a $\tau$-periodic limit $c\in C^{ac}([0,\theta],\R^2)$ and $A(c)=0$ (observe that $l_F(c^n)=T_n$ is bounded, so the $c^n[0,\theta]$ lie in a fixed compact set). This shows $\dot c\in\M^h$ ($c$ is of minimal length $\theta$ in the homotopy class $z$), contradicting $d(c(0),\pi(\M^h))\geq \delta>0$.

(ii) By definition we have $\om^\pm(\delta)\geq \om^\pm$. Suppose $\om^+(\delta)=\om^+$. Then there are $c^n\in \Omega^+$ with $J(c^n)\to \om^+$ and after applying suitable $\tau^k$ we can assume $c^n(0)\to x\in S$ with $d(x,\pi(\NN^\pm))\geq \delta$ (in particular $x\notin\partial S$). Set
\[ S^+ := \Con_x (\Int (S) - \pi(\NN^\pm)). \]
$S^+$ is bounded by two curves $c_0,c_1\in\Om^+$ with $J(c_i)=\om^+$ and for large $n$, the points $c^n(0)$ also lie in $S^+$. If $c^n(\R)\not\subset \overline{S^+}$, it intersects one of the $c_i$. In this case replace $c^n$ by $c_i$ before/after the intersection, such that $c^n(\R)\subset \overline{S^+}$ for all $n$. Using that $c^n,c_i$ are asymptotic and $c_i$ is minimal w.r.t. $A$, it is easy to see that this does not increase $J(c^n)$ and hence w.l.o.g. $c^n(\R)\subset \overline{S^+}$ for all $n$. Now apply the lower semi-continuity of $J$ in proposition \ref{J semi-cont}. We obtain a limit $c=\lim c^n$ with $J(c)\leq \liminf J(c^n)=\om^+$. By proposition \ref{M^pm}, we have $\dot c(0)\in \NN^+$, contradicting $c(0)=x$ and $d(x,\pi(\NN^\pm))\geq\delta>0$.
\end{proof}


\subsection{Multibump solutions, if the gap-condition holds}\label{multibump}

We continue to work in the setting of subsection \ref{section J} and fix the triple $h,q_0,q_1$ for the rest of this section. Moreover we make the following
\abs

{\bf Assumption.} We assume for the rest of this section that the gap-condition from definition \ref{def gap} holds for the triple $h,q_0,q_1$.\abs

In this subsection we ask for dynamical consequences of the gap-condition, i.e. if there is no invariant torus $\TT_h^0$. We define ''switches'' $\s^\pm$ and using these switches we can prescribe oscillatory behavior. Write
\[ [i] := \begin{cases} 0 & : i \text{ even} \\ 1 & : i \text{ odd}\end{cases}  \qquad  \ep_i := \begin{cases} + & : i \text{ even} \\ - & : i \text{ odd}\end{cases}, \quad i\in\Z. \]

\begin{defn}[switches]\label{def switches}
Choose altogether four points
\[x_0^\pm,x_1^\pm \in \Int S - \pi(\NN^\pm), \]
such that $x_0^\pm,x_1^\pm$ lie in different connected components of $\Int (S)-\pi(\NN^\pm)$, and define the open sets
\[ S_i^\pm := \Con_{x_i^\pm} \left ( \Int (S) - \pi(\NN^\pm) \right ) \quad \subset S\subset\R^2 \]
(i.e. $\partial S_i^\pm$ consists of two semistatics from $\NN^\pm$ and no semistatic from $\NN^\pm$ runs into $\Int S_i^\pm$). Assume that the $S_0^\pm$ lie further left than the $S_1^\pm$ w.r.t. the orientation given by $h$. Choose $\delta>0$ and define the compact sets
\[ S_i^\pm(\delta) := \Clos\left ( \Con_{x_i^\pm} \left \{ x\in S_i^\pm: d(x, \partial S_i^\pm) > \delta \right \} \right ). \]
Choose curve segments $\gamma_i^\pm$ connecting $x_i^+$ to $q_{[i+1]}$, $x_i^-$ to $q_{[i]}$, respectively, and closed tubular neighborhoods $T_i^\pm$ of $\gamma_i^\pm$, such that
\begin{align*}
d(T_0^\pm, \text{ right part of } \partial S_0^\pm) > \delta ,\quad d(T_1^\pm, \text{ left part of } \partial S_1^\pm) > \delta.
\end{align*}
The \emph{switches} are the compact sets
\[ \s^-= \s^-(\delta) = \cup_{i=0,1} (S_i^-(\delta) \cup T_i^-) , \quad \s^+= \s^+(\delta) = \cup_{i=0,1} (S_i^+(\delta) \cup T_i^+). \]
Additionaly we choose two \emph{test curves} $c^\pm$ from $\NN^\pm$ with $c^\pm(\R)\cap \s^\pm=\emptyset$ and a straight euclidean segment $\sig_0$ orthogonal to $h$ connecting $q_0$ to $q_1$ having its interior in $\Int(S)$. Also choose $\kappa\in\N$, s.th.
\[ \la h , x \ra \leq \la h , y \ra \leq \la h , z \ra \quad \forall x\in \tau^{-\kappa}\sig_0, y \in\s^\pm, z \in \tau^{\kappa}\sig_0. \]
\end{defn}

\begin{figure}
\centering
\includegraphics[scale=1.0]{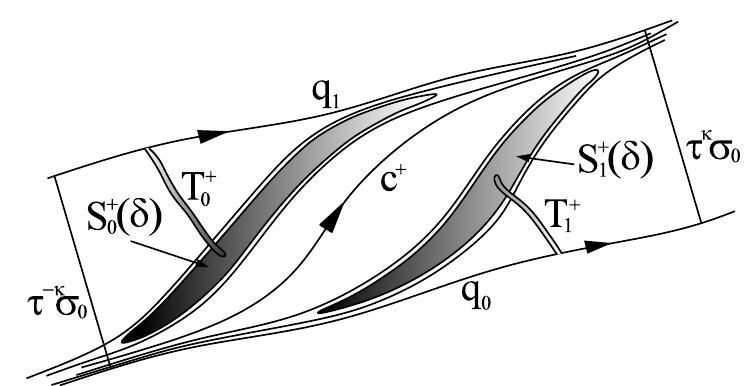}
\caption{The switch $\s^+$ with test curve $c^+$ and $\tau^{\pm\kappa}\sig_0$ constructed in definition \protect\ref{def switches}.}
\end{figure}

\begin{defn}[oscillating behavior]\label{def oscillating}
Pick some $\nu\in\N$ and a biinfinite sequence of integers $W = \{w_i\}_{i\in\Z}\subset \Z$ with $w_{i+1}\geq w_i + 2\kappa+\nu$. For $j\leq k$ write $\U^{jk}\subset S$ for the open set consisting of $\Int S - \cup_{j\leq i\leq k} \tau^{w_i}\s^{\ep_i}$ minus the regions left of $\tau^{w_{j-1}}c^{\ep_j}$ and right of $\tau^{w_{k+1}}c^{\ep_k}$ (w.r.t. the orientation given by $h$). 

Set
\begin{align*}
& \Omega^{jk} := \left \{ c\in\Omega ~:~ c(\R) \subset \overline{\U^{jk}} \right \}, \\
& \omega^{jk} := \inf\{ J(c) ~:~ c\in \Omega^{jk} \} .
\end{align*}
\end{defn}

\begin{bemerk}\label{bem def oscillating}\begin{itemize}
\item[(i)] We have $\omega^{jk} \in[B,\infty)$, where $B$ was defined in remark \ref{bem J-def}. Moreover $\om^{ii}=\om^{\ep_i}$ for all $i\in\Z$ in the notation of proposition \ref{M^pm} by the existence of the test curves.

\item[(ii)] By definition of $\Om^{jk}$, the curves $c\in\Om^{jk}$ satisfy
\[ c(-\infty) = q_{[j]}, \quad c(\infty) = q_{[k+1]}. \]

\item[(iii)] Note that if $c\in \Om$ is minimal for $J$ in an open set $U\subset \Int S$, each segment $c|_{[a,b]}$ is minimal for the action $A$ in $U$ and by proposition \ref{F=1 optimal} (iii), it is locally minimal for the Finsler-length and parametrised by arc-length. Hence $J$-minimal curves in open sets are arc-length geodesics and our goal is to find $J$-minimal curves $c\in \Om$ with $c(\R)\subset \U^{jk}$ and $J(c)=\om^{jk}$.
\end{itemize}\end{bemerk}

\begin{figure}
\centering
\includegraphics[scale=1.3]{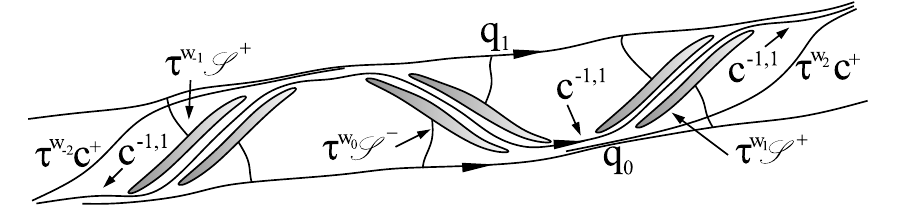}
\caption{A curve $c(\R)\subset \overline{\U^{jk}}$ is forced to have prescribed oscillating and asymptotic behavior. Here we see $\U^{-1,1}$ and the curve $c^{-1,1}$.}
\end{figure}

Another simple application of the semi-continuity of $J$ shows that $\om^{jk}$ is in fact a minimum. Later we will have to choose the right $\delta,\kappa, \nu$ in definitions \ref{def switches} and \ref{def oscillating} to show that minimizers in $\Om^{jk}$ are geodesics, i.e. $c(\R)\subset \U^{jk}$.

\begin{cor}\label{existence c^jk}
There exists $c^{jk}\in\Omega^{jk}$ with $J(c^{jk})=\omega^{jk}$ and $F(\dot c^{jk})=1$ a.e..
\end{cor}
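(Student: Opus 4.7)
The plan is a direct method argument: take a minimizing sequence $c^n \in \Om^{jk}$ with $J(c^n) \to \om^{jk}$, extract a $C^0_{loc}$-convergent subsequence via proposition \ref{J semi-cont}, and verify that the limit lies in $\Om^{jk}$ and attains the infimum. First, by proposition \ref{F=1 optimal}(ii) applied to each finite segment, the arc-length reparametrization of $c^n$ has no larger action, hence no larger $J$-value. Since this reparametrization preserves the image (inside $\overline{\U^{jk}}$) and the asymptotic limits, the reparametrized curve remains in $\Om^{jk}$, so we may assume $F(\dot c^n) \equiv 1$ a.e.

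Next, to invoke the semi-continuity of $J$ we must fix the time parameter so that the footpoints all lie in a common compact set. Choose any value $r_0$ in the range of $\la h, \cdot \ra$ on $\U^{jk}$; since $t \mapsto \la h, c^n(t) \ra$ is continuous and tends to $\pm\infty$, we can shift time so $\la h, c^n(0) \ra = r_0$. Then $c^n(0)$ lies in the compact transversal $K := \{ x \in \overline{\U^{jk}} : \la h, x \ra = r_0 \}$, which is a bounded segment inside $S$.

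Proposition \ref{J semi-cont} now yields a $C^0_{loc}$-convergent subsequence with limit $c \in \Om$, and $c(\R) \subset \overline{\U^{jk}}$ by closedness of $\overline{\U^{jk}}$. It remains to check that $c \in \Om^{jk}$ has the same asymptotic behavior as the $c^n$, namely $c(-\infty) = q_{[j]}$ and $c(\infty) = q_{[k+1]}$. This follows from the geometry of $\overline{\U^{jk}}$: as $\la h, x \ra \to -\infty$, this set is pinched between $q_{[j]}$ and the left bounding curve $\tau^{w_{j-1}} c^{\ep_j}$, a heteroclinic asymptotic to $q_{[j]}$, so any $c \in \Om$ lying in $\overline{\U^{jk}}$ automatically satisfies $d(c(t), q_{[j]}(\R)) \to 0$ as $t \to -\infty$; the analogous argument at $+\infty$ handles the other end. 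The second half of proposition \ref{J semi-cont} then gives $J(c) \leq \liminf J(c^n) = \om^{jk}$, while $c \in \Om^{jk}$ forces $J(c) \geq \om^{jk}$, so $J(c) = \om^{jk}$. A final arc-length reparametrization, which cannot increase $J$ and hence preserves minimality, produces the desired $c^{jk}$ with $F(\dot c^{jk}) \equiv 1$ a.e.

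The main obstacle is the asymptotic verification above: the inequality in proposition \ref{J semi-cont} requires the $c^n$ and $c$ to share asymptotic limits, otherwise the $J$-sequences for $c$ cannot be compared to those for $c^n$. One is thus forced to exploit the pinching of $\overline{\U^{jk}}$ toward $q_{[j]}$ and $q_{[k+1]}$ at the two $h$-ends, which was built in by construction through the heteroclinic nature of the bounding test curves $\tau^{w_{j-1}} c^{\ep_j}$ and $\tau^{w_{k+1}} c^{\ep_k}$ in definitions \ref{def switches} and \ref{def oscillating}, rather than trying to preserve the asymptotic behavior under $C^0_{loc}$-convergence directly.
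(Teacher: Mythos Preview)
Your proof is correct and follows essentially the same direct-method approach as the paper: take a minimizing sequence, pin $c^n(0)$ in a compact set, apply proposition \ref{J semi-cont}, and use the geometry of $\overline{\U^{jk}}$ (your pinching argument is exactly the content of remark \ref{bem def oscillating}(ii)) to match the asymptotic behavior. The only minor difference is that the paper does not reparametrize the minimizing sequence beforehand but instead deduces $F(\dot c)=1$ a.e.\ a posteriori from the $A$-minimality of the limit via proposition \ref{F=1 optimal}; your final reparametrization step accomplishes the same thing, so the initial reparametrization of the $c^n$ is harmless but unnecessary.
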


\begin{proof}
Take a sequence $c^n\in\Om^{jk}$ with $J(c^n)\leq \om^{jk}+1/n$ and fix $c^n(0)$ in a compact set. Applying proposition \ref{J semi-cont} gives a limit curve $c=: c^{jk} \in\Om^{jk}$ (since $\overline{\U^{jk}}$ is closed). By definition of $\U^{jk}$, all $c^n,c$ have the same asymptotic behavior, showing $J(c)\leq \om^{jk}$ by proposition \ref{J semi-cont}. $F(\dot c)=1$ follows from the $A$-minimality of $c$ and proposition \ref{F=1 optimal}.
\end{proof}

Using the minimality of $c^{jk}$ we have the following.

\begin{prop}\label{linear oscillation}
Let $c^{jk}$ as in corollary \ref{existence c^jk}. \begin{itemize}
\item[(a)] There are $C_0,C_1>0$ (depending only on $\sig_0, \theta, \kappa, \nu$), s.th. for $t_0,T\in\R$
\[ c^{jk}(t_0)\in \tau^{w_j-\kappa}\sig_0, ~~ c^{jk}(t_0+T) \in \tau^{w_k+\kappa}\sig_0  \quad \Rightarrow \quad T \leq C_0+C_1 \cdot (w_k-w_j) .\]

\item[(b)] If $c^{jk}$ is disjoint from the tubes $\tau^{w_j}T_i^\pm$ in the translated switches, we have
\[ c^{jk}(\R)\cap \tau^{w_{j-1}}c^{\ep_j}(\R) = c^{jk}(\R)\cap \tau^{w_{k+1}}c^{\ep_k} (\R) = \emptyset, \quad c^{jk}(\R)\subset \Int(S). \]
\end{itemize}
\end{prop}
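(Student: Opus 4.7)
\medskip

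\noindent\textbf{Plan of proof.}

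\emph{Part (a).} The strategy is to read off $T$ from the value of the asymptotic action. Since $F(\dot c^{jk}) = 1$ a.e., we have $L_F(\dot c^{jk}) \equiv 1/2$, so for any $s \le t$
\[
A(c^{jk}[s,t]) \;=\; (t-s) - \langle h_0, c^{jk}(t)-c^{jk}(s)\rangle.
\]
Applied to $s = t_0$, $t = t_0+T$ with endpoints $x \in \tau^{w_j-\kappa}\sigma_0$ and $y \in \tau^{w_k+\kappa}\sigma_0$, and using that $\sigma_0 \perp h$ together with $\langle h_0,z\rangle = \theta$ (which follows from $A(q_i[0,\theta])=0$ and $q_i(\theta) = q_i(0)+z$), I would compute
\[
\langle h_0, y-x\rangle \;=\; (w_k - w_j + 2\kappa)\,\theta,
\]
so that $T = A(c^{jk}[t_0,t_0+T]) + (w_k-w_j+2\kappa)\theta$. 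It then suffices to bound $A(c^{jk}[t_0,t_0+T])$ linearly in $w_k-w_j$.

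For the upper bound, I would use the universal constant $B$ from remark \ref{bem J-def}(iii). Choose a $J$-sequence $(s_n,t_n)$ with $s_n < t_0 < t_0+T < t_n$ and split
\[
J(c^{jk}) \;=\; \lim_n \bigl(A(c^{jk}[s_n,t_0]) + A(c^{jk}[t_0,t_0+T]) + A(c^{jk}[t_0+T,t_n])\bigr),
\]
so $A(c^{jk}[t_0,t_0+T]) \le \omega^{jk} - 2B$. The remaining task is to bound $\omega^{jk}$ linearly in $w_k-w_j$. For this I would construct an explicit competitor $\tilde c \in \Omega^{jk}$ as a concatenation of long segments along $q_{[j]}, q_{[j+1]}, \dots, q_{[k+1]}$ joined by short detours around each translated switch $\tau^{w_i}\s^{\ep_i}$; each detour is a curve of bounded length (hence bounded $A$-cost $C$ depending only on $F$ and the fixed data in definition \ref{def switches}), and the periodic segments contribute $0$-action up to a bounded boundary term (since $A(q_{[i]}[a,a+\theta])=0$). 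This gives $\omega^{jk} \le C_2 + C_3(k-j+1)$, and from $w_{i+1}-w_i \ge 2\kappa+\nu$ we have $k-j+1 \le 1 + (w_k-w_j)/(2\kappa+\nu)$. Combining the estimates yields the claim with $C_0, C_1$ depending only on $\sigma_0, \theta, \kappa, \nu$.

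\emph{Part (b).} The strategy is to apply the standard non-intersection properties of arc-length minimals listed before lemma \ref{c_F}. Since $c^{jk}$ is $J$-minimal in $\Omega^{jk}$, it is locally $A$-minimal, so by proposition \ref{F=1 optimal}(iii) a segment in an open subset of $\overline{\U^{jk}}$ is an arc-length $F$-geodesic, minimal among homologous curves. Suppose $c^{jk}$ intersects one of the curves $q_0$, $q_1$, or a test curve $\tau^{w_{j-1}}c^{\ep_j}$, $\tau^{w_{k+1}}c^{\ep_k}$. If the velocities agree at the intersection, uniqueness of geodesics forces global coincidence, contradicting either the oscillating behavior imposed by $\overline{\U^{jk}}$ (in the case of $q_i$ or an outer test curve, neither of which visits the switches $\tau^{w_i}\s^{\ep_i}$). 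A successive (transverse-type) intersection is ruled out by the minimality property: two arc-length minimals which meet at a common point with distinct tangents never meet again.

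The role of the tube-avoidance assumption is to exclude the remaining possibility, a single transverse crossing in which $c^{jk}$ and the competing minimal cross but neither coincides nor gives the previous contradiction. The tubes $\tau^{w_i}T_\cdot^{\ep_i}$ were constructed in definition \ref{def switches} precisely to be the ``necks'' through which any curve in $\overline{\U^{jk}}$ must pass in order to leave a neighborhood of $q_{[i]}$ or enter one of $q_{[i+1]}$; disjointness from them therefore localizes $c^{jk}$ strictly in $\Int S$, away from the outer test curves. The main obstacle here will be making this topological argument precise, i.e., checking that the tubes together with $S_i^{\ep}(\delta)$ really do separate the relevant components of $\overline{\U^{jk}}$ from $\partial S$ and from $\tau^{w_{j-1}}c^{\ep_j}(\R)\cup \tau^{w_{k+1}}c^{\ep_k}(\R)$; this will rely on the specific geometry of $\s^\pm$ set up in definition \ref{def switches}.
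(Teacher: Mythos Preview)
Your approach is correct but somewhat more elaborate than the paper's. You read off $T$ from the action via $T = A(c^{jk}[t_0,t_0+T]) + (w_k-w_j+2\kappa)\theta$, bound the action piece by $\omega^{jk}-2B$, and then bound $\omega^{jk}$ by constructing a competitor. The paper instead uses directly that $c^{jk}$, being $A$-minimal among curves in $\overline{\U^{jk}}$ with given endpoints, is also $l_F$-minimal there (proposition \ref{F=1 optimal}); since $T=l_F(c^{jk}[t_0,t_0+T])$, one bounds $T$ immediately by the length of a competitor built from the test curves and pieces of $q_i$ between the switches. Your detour through $\omega^{jk}$ works, but the length-comparison shortcut is cleaner and avoids the auxiliary estimate on $\omega^{jk}$.

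\textbf{Part (b).} Here there is a genuine gap. Your argument invokes the standard intersection properties of arc-length minimal geodesics, but $c^{jk}$ is only known to be a geodesic on segments lying in the \emph{open} set $\U^{jk}$; at points where $c^{jk}$ touches $\partial\U^{jk}$ --- in particular on $q_0$, $q_1$, or the outer test curves --- you do not yet know that $c^{jk}$ is smooth, so neither ``equal velocities $\Rightarrow$ global coincidence'' nor ``two minimals cannot meet twice successively'' is available. The case you need to rule out is that $c^{jk}$ runs along $c_0$ on a maximal interval $I$ and then leaves with a corner; this is exactly what the paper handles.

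The paper's argument is a vertex-shortening: take $b=\sup I$ (finite by the asymptotics forced by $\Omega^{jk}$). Just after $b$, $c^{jk}$ lies in the open component of $\R^2\setminus c_0(\R)$ on the $\U^{jk}$ side. The only pieces of $\partial\U^{jk}$ that reach a neighborhood of $c_0$ are the tubes $T_i^\pm$ (the sets $S_i^\pm(\delta)$ are by construction bounded away from $\partial S$ and from the test curves). Hence, under the tube-avoidance hypothesis, $c^{jk}$ is $A$-minimal in an \emph{open} neighborhood on each side of $b$, so both one-sided segments $c^{jk}(b-\e,b)$ and $c^{jk}(b,b+\e)$ are genuine geodesics. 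Since they cannot match up smoothly (uniqueness of geodesics would then force $c^{jk}$ to stay on $c_0$ beyond $b$, contradicting maximality of $I$), $c^{jk}$ has a corner at $c^{jk}(b)$ which can be shortened inside $\U^{jk}$, contradicting minimality. This is the correct role of the tube-avoidance assumption --- it guarantees local geodesicity near $c_0$ --- not the topological separation picture you sketch in your last paragraph.
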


\begin{proof}
Let $c=c^{jk}$. (a) follows simply by comparing $c$ to the test curves between the switches, using $l_F$-minimality of $c$ and the assumption $w_{i+1}-w_i \geq \nu+2\kappa$.

(b) Let $c_0$ be any of the four curves $q_0,q_1,\tau^{w_{j-1}}c^{\ep_j},\tau^{w_{k+1}}c^{\ep_k}$. Suppose $c(I)\subset c_0(\R)$, where $I\subset\R$ is a maximal closed interval with this property. By definition of $\U^{jk}$, $I$ can only be of the form $[a,b],[a,\infty),(-\infty,b]$. We treat the first case, the others being analogous. Let $c_0(t_0)=c(b)$. By minimality of $c$, it has no selfintersections and hence $c(-\infty,b)$ lies in the connected component of $\R^2-\left (c[b,\infty)\cup c_0[t_0,\infty) \right )$ containing $c_0(t_0-\e,t_0)$ ($c(\R)$ is contained in the closure of one component of $\R^2-c_0(\R)$). This shows that $c(b-\e,b)$ is also a geodesic for $\e>0$ small: either this segment is part of $c_0(-\infty, t_0]$ or it lies in one connected component of $\R^2-c_0(\R)$ (an open set), where $c$ is disjoint from the $\tau^{w_j}T_i^\pm$ by assumption and hence geodesic by remark \ref{bem def oscillating} (iii). Now the two segments $c(b-\e,b),c(b,b+\e)$ are geodesisc, but $\dot c(b-)\neq\dot c(b+)$ (uniqueness of geodesics) and we can shorten $c$ at the vertex in $c(b)$ in such a way that the new curve is disjoint from $c_0$ near the vertex. This contradicts $c$'s the $l_F$-minimality.
\end{proof}

Choosing the right $\delta,\kappa,\nu$ in definitions \ref{def switches} and \ref{def oscillating}, the $c^{jk}$ are in fact geodesics, as we shall prove now. Intuitively, the $J$-minimizing curve $c^{jk}\in\Om^{jk}$ cannot intersect the sets $S_i^\pm(\delta)$ in the switches since, by lemma \ref{q_i neighboring}, the asymptotic action $J$ immediately increases. We could view the sets $S_i^\pm(\delta)$ as ''hilly'' areas in the geometrical landscape $(\R^2,F)$ and local minimizers $c\in\Om$ travel trough the valleys in this landscape, accompanying the test curves $c^\pm$.

\begin{thm}\label{free of constraints}
There exist $\delta > 0$ and $\kappa,\nu \in \N$, such that for $c^{jk}$ from corollary \ref{existence c^jk} we have
\[ c^{jk}(\R) \subset \U^{jk}. \]
In particular the $c^{jk}$ are locally minimizing arc-length geodesics.
\end{thm}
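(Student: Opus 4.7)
The strategy is a local modification at the alleged obstacle: assuming $c := c^{jk}$ enters one of the obstacles at switch index $l_0$, I construct an alternative curve $c' \in \Om^{jk}$ that agrees with $c$ outside the switch interval $I_{l_0} = [t_{l_0}^-, t_{l_0}^+]$ and has strictly smaller $A$-action on $I_{l_0}$. Since any such modification preserves both the asymptotic behavior and the containment in $\overline{\U^{jk}}$, the $J$-minimality of $c$ forces $A$-minimality on each sub-interval with fixed endpoints, and $A(c|_{I_{l_0}}) > A(c'|_{I_{l_0}})$ gives the contradiction. Here $t_l^\pm$ denote the times at which $c$ crosses $\tau^{w_l \pm \kappa}\sig_0$. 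The argument rests on two consequences of lemma \ref{q_i neighboring}: a fixed positive \emph{obstacle gap} $\gamma := \om^{\ep_{l_0}}(\delta) - \om^{\ep_{l_0}} > 0$ from part (ii) and the strict positivity $\om(\delta') > 0$ from part (i).

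The first ingredient is an \emph{approach property}: for any prescribed $\delta' > 0$, provided $\kappa, \nu$ are sufficiently large, every minimizer $c = c^{jk}$ satisfies $d(c(t_l^-), q_{[l]}(\R)) < \delta'$ and $d(c(t_l^+), q_{[l+1]}(\R)) < \delta'$ for all $l = j, \ldots, k$. To prove this I first exhibit an explicit competitor $\tilde c \in \Om^{jk}$ obtained by concatenating translates $\tau^{w_l} c^{\ep_l}$ of the test curves with arcs of $q_0, q_1$ between switches, where the transitions are placed where each $c^{\ep_l}$ is asymptotically within $r(\kappa) \to 0$ of the relevant periodic minimizer; this yields $\om^{jk} \le J(\tilde c) \le \sum_{l=j}^k \om^{\ep_l} + O(r(\kappa))$ with bridging errors controlled uniformly in $j, k$. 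Combining with lemma \ref{q_i neighboring}(i), applied to $\tau$-closures of the between-switch segments, shows that a crossing at distance $\ge \delta'$ from the $q_{[l]}$ forces a between-switch stretch to carry a definite amount of extra $A$-action per $\tau$-period; making $\nu$ large enough that this accumulated extra cost would exceed the $J$-budget afforded by the competitor yields the approach property.

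Given the control on the $\sig_0$-crossings, the local comparison becomes straightforward. On the lower-bound side, if $c$ enters $\tau^{w_{l_0}} S_{i_0}^{\ep_{l_0}}(\delta)$ (or the tube $\tau^{w_{l_0}} T_{i_0}^{\ep_{l_0}}$, handled identically via the choice $d(T_i^\pm, \partial S_i^\pm) > \delta$), extend $c|_{I_{l_0}}$ to a heteroclinic $\hat c \in \Om^{\ep_{l_0}}$ by pasting the periodic tails $q_{[l_0]}, q_{[l_0+1]}$ via straight euclidean bridges of length $< \delta'$ at the two $\sig_0$-crossings. Since $\partial S_{i_0}^{\ep_{l_0}} \subset \pi(\NN^{\ep_{l_0}})$, the hit-point lies at distance $\ge \delta$ from $\pi(\NN^{\ep_{l_0}})$, and lemma \ref{q_i neighboring}(ii) together with the estimate $J(\hat c) = A(c|_{I_{l_0}}) + O(\delta')$ (using definition \ref{def b}) gives $A(c|_{I_{l_0}}) \ge \om^{\ep_{l_0}}(\delta) - O(\delta')$. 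For $c'$, build $c'|_{I_{l_0}}$ by bridging $c(t_{l_0}^-) \to q_{[l_0]} \to \tau^{w_{l_0}} c^{\ep_{l_0}} \to q_{[l_0+1]} \to c(t_{l_0}^+)$, each bridge of length $< \delta'$ placed in an asymptotic regime of the relevant curves; this produces $c'|_{I_{l_0}} \subset \overline{\U^{jk}}$ (the test curve is disjoint from $\s^{\ep_{l_0}}$ and the bridges stay in obstacle-free neighborhoods) with $A(c'|_{I_{l_0}}) \le \om^{\ep_{l_0}} + O(\delta')$. Subtracting,
\[ A(c|_{I_{l_0}}) - A(c'|_{I_{l_0}}) \;\ge\; \gamma - O(\delta'). \]

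Fix $\delta > 0$ first so that the obstacle gap $\gamma$ is a specific positive number. Then choose $\delta' > 0$ small enough that $O(\delta') < \gamma/2$ in the above estimate, and finally take $\kappa, \nu$ large enough to realize both the approach property at scale $\delta'$ and $r(\kappa)$ correspondingly small. The resulting strict inequality $A(c|_{I_{l_0}}) > A(c'|_{I_{l_0}})$ contradicts the $A$-minimality of $c$ on $I_{l_0}$ with fixed endpoints, showing $c^{jk}(\R) \subset \U^{jk}$. The main technical obstacle is the approach property: one must convert the qualitative statement $\om(\delta') > 0$ of lemma \ref{q_i neighboring}(i) into a pointwise estimate on the crossings that holds uniformly in $j, k$, so that a single choice of $\kappa, \nu$ works for all windows $\Om^{jk}$ simultaneously and no error terms linear in $k - j$ arise to spoil the comparison with the fixed gap $\gamma$.
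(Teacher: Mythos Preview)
Your overall strategy matches the paper's: establish that the minimizer $c^{jk}$ comes close to the appropriate periodic $q_{[l]}$ between switches, then use the obstacle gap $\om^{\ep_{l_0}}(\delta)-\om^{\ep_{l_0}}>0$ in a local comparison to rule out entering the obstacle sets. The upper/lower bounds you assemble in the comparison step are exactly the paper's Step 5 and choice (5a). However, two points are genuine gaps rather than mere sketches.

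\textbf{Tubes are not handled by the $\om^\pm(\delta)$ argument.} You write that entering a tube is ``handled identically via the choice $d(T_i^\pm,\partial S_i^\pm)>\delta$''. But Definition \ref{def switches} only requires $T_0^\pm$ to be $\delta$-far from the \emph{right} part of $\partial S_0^\pm$ and $T_1^\pm$ from the \emph{left} part of $\partial S_1^\pm$. The curves $\gamma_i^\pm$ run from $x_i^\pm$ to the periodic minimizer $q_{[i+1]}$ (resp.\ $q_{[i]}$), so they \emph{cross} the other boundary heteroclinic of $S_i^\pm$; the tube $T_i^\pm$ therefore contains points arbitrarily close to $\pi(\NN^\pm)$, and a point of $c$ inside the tube need not satisfy $d(c(t),\pi(\NN^{\ep_{l_0}}))\ge\delta$. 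Lemma \ref{q_i neighboring}(ii) yields nothing there. The paper disposes of the tubes by a completely separate mechanism (Step 1, choice (3b)): a segment of $c$ entering $T_0^+$ must have both endpoints on the short transversal $L$, so by $A$-minimality its action is bounded by that of $\gamma_L$, while its $F$-length is at least $d_F(L,T_0^+)$; choosing $\delta$ small makes $L$ long and this forces a contradiction. This is a length argument, not an action-gap argument, and something of this kind is needed.

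\textbf{Well-definedness of the $t_l^\pm$ and locality of the modification.} You define $t_l^\pm$ as \emph{the} crossing times of $\tau^{w_l\pm\kappa}\sig_0$, and your competitor $c'$ only modifies $c$ on $I_{l_0}=[t_{l_0}^-,t_{l_0}^+]$. But a priori $c$ could oscillate back and forth across $\tau^{w_l\pm\kappa}\sig_0$ many times, so the crossing times are not unique and a segment of $c$ near switch $l_0$ need not be confined to $I_{l_0}$. The paper secures this with Step 2 (choice (5b)): a simple length comparison using $\sig_0\perp h$ shows that once $c$ passes $\tau^{w_i+\kappa}\sig_0$ it cannot return to $\tau^{w_i+\kappa_0}\sig_0$. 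Without this, both your approach property and your local replacement by $c'$ are not well-posed. Your final paragraph correctly identifies the uniform approach estimate as the main technical load; the paper breaks it into Steps 3 and 4 (first $\e$-close between switches via $\om(\e)$, then $\delta_0$-close at some point via $\om(\delta_0)$ and the length $\nu$), and those arguments in turn rely on the once-crossing property of Step 2.
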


\begin{figure}
\centering
\includegraphics[scale=1.0]{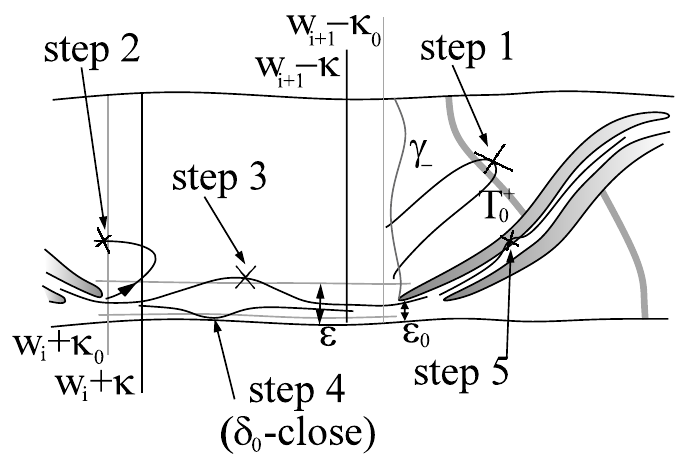}
\caption{The five steps in the proof of theorem \protect\ref{free of constraints}.}
\end{figure}

\begin{proof}
Choose the following real numbers:\begin{itemize}

\item[(1)] $\e$ with
\[ 0 < \e < \frac{1}{2} \inf_{s,t\in\R} d(q_0(s),q_1(t)) . \]

\item[(2)] $\e_0$ with
\[0< \e_0  < \frac{\om(\e)}{4b}. \]

\item[(3)] Some small $\delta>0$. Now construct the switches $\s^\pm,\sig_0,\kappa$ and the test curves $c^\pm$ in definition \ref{def switches} (where we assume that $\delta$ is sufficiently small in order to have $S_i^\pm(\delta)\neq\emptyset$). Make $\delta$ smaller, s.th. the following conditions are satisfied.

Set
\begin{align*}
 L:= \left \{\la h,.\ra = \min_{S_0^+(\delta)} \la h,.\ra \right \}\cap S , \quad R:= \left \{\la h,.\ra = \max_{S_1^+(\delta)} \la h,.\ra \right \}\cap S.
\end{align*}
\begin{itemize}
\item[(a)] In the connected component of $q_0$ in the strip between $L,\tau L$ in $S-S^+_0(\delta)$, any point $x$ has $d(x,q_0(\R))\leq \e_0$. Analogously, in the connected component of $q_1$ in the strip between $\tau^{-1}R,R$ in $S-S^+_1(\delta)$, any point $x$ has $d(x,q_1(\R))\leq \e_0$.

\item[(b)] Parametrise $\Con_{S_0^+(\delta)\cap L}L,\Con_{S_1^+(\delta)\cap R}R$ by curves $\gamma_L,\gamma_R:[0,1]\to S$ and let $C$ be an upper bound for the action of $\gamma_L^{\pm 1},\gamma_R^{\pm 1}$ on subintervals $[c,d]\subset[0,1]$ (where $\gamma_{L,R}^{-1}(t)=\gamma_{L,R}(1-t)$). Recall $T_0^+$ being the tube in definition \ref{def switches}. We impose that
\begin{align*}
& C + \|h_0\| \max \{\diam_{euc}(L),\diam_{euc}(R)\} \\
& \qquad \qquad \qquad \leq \inf \{ d_F(x,y) : x\in L\cup R, y\in T_0^+ \} .
 \end{align*}
\end{itemize}
That (a) and (b) can be met follows since $S_i^+(\delta)$ become longer and in the ends approach $q_0,q_1$, as $\delta\searrow 0$. We ask analogous conditions of $\s^-$.

\item[(4)] $\delta_0$ with
\[0< \delta_0  < \min_{i=0,1} \frac{\om^{\ep_i}(\delta)-\om^{\ep_i}}{4b+1} \]

\item[(5)] Adjust the choice of $\kappa\in\N$ from choice (3), s.th. the following conditions hold. \begin{itemize}
\item[(a)] Write $s_i^\pm$ for times where the test curves $c^{\ep_i}$ pass $\tau^{\pm\kappa}\sig_0$, respectively for $i=0,1$. Choose $\kappa$ so large that for any $a \leq s_i^-, s_i^+ \leq b$ the points $c^{\ep_i}(a),c^{\ep_i}(b)$ lie $\delta_0$-close to the corresponding limit $c^{\ep_i}(-\infty)=q_{[i]}, c^{\ep_i}(\infty)=q_{[i+1]}$ and for times $s,t\in \R$ satisfying
\[ d(c^{\ep_i}(a),q_{[i]}(s))\leq \delta_0 , \quad d(c^{\ep_i}(b), q_{[i+1]}(t))\leq \delta_0 \]
we have
\[ A(q_{[i]}[s^{\theta-}, s]) + A(c^{\ep_i}[a, b]) + A(q_{[i+1]}[t, t^{\theta+} ]) \leq \om^{\ep_i} + \delta_0 .\]
Here $s^{\theta-} := \max\{r\in\theta\Z: r\leq s\}$ and $t^{\theta+} := \min\{r\in \theta\Z: r\geq t\}$. That this is possible follows since $c^{\ep_i}$ converges to $q_{[i]},q_{[i+1]}$ in $C^1$ by the graph property of $\NN^\pm$ and from the convergence $A(c^{\ep_i}[s_n,t_n])\to \om^{\ep_i}$.
\item[(b)] The switches $\s^\pm$ are contained in the region between $\tau^{\pm\kappa_0}\sig_0$ for some $\kappa_0\in\N$. We ask
\[ \kappa > \frac{c_F l_F(\sig_0)}{2\|z\|} + \kappa_0,  \]
where $\tau=x\mapsto x+z$.
\end{itemize}

\item[(6)] $\nu\in\N$ with
\[  \nu > \frac{3b\e}{\om(\delta_0)}. \]
\end{itemize}

Now let $c=c^{jk}$ from corollary \ref{existence c^jk}. We prove the theorem in five steps.

\underline{Step 1.} ($c$ is disjoint from the tubes $\tau^{w_i}T_j^\pm$) Suppose e.g. $i$ is even and $c(t^*)\in \tau^{w_i}T_0^+$, then we find a compact segment $\tau^{-w_i}c[a,b]$ having endpoints, say, in $L$ from choice (3 b) and with $t^*\in[a,b]$. Then for $c,d\in[0,1]$ with $\gamma_L(c)=c(a), \gamma_L(d)=c(b)$ (assuming $c\leq d$, using $\gamma_L^{-1}$ in the other case) we have by minimality of $c$ w.r.t. the action $A$ that
\begin{align*}
C & \geq A(\gamma_L[c,d]) \geq A(c[a,b]) = l_F(c[a,b]) - \la h_0, c\ra |_a^b \\
& \geq l_F(c[a,b]) - \|h_0\| \|c(b)-c(a) \| \\
& \geq \inf \{ d_F(x,y) : x\in L, y\in T_0^+ \} - \|h_0\| \diam_{euc}(L),
\end{align*}
contradicting choice (3 b).

\underline{Step 2.} ($c$ traverses the region between $\tau^{\kappa_0}\sig_0,\tau^{\kappa}\sig_0$ only once) Suppose $c[a,b]$ has endpoints on $\tau^{w_i+\kappa_0}\sig_0$ and some $c(t^*)\in \tau^{w_i+\kappa}\sig_0$ for $t^*\in[a,b]$. Then by minimality of $c$
\begin{align*}
l_F(\sig_0) \geq A(c[a,b]) = l_F(c[a,b]) - \underset{=0}{\underbrace{\la h_0, c\ra |_a^b}} \geq \frac{1}{c_F} l_{euc}(c[a,b]) \geq \frac{2(\kappa-\kappa_0)\|z\|}{c_F}
\end{align*}
contradicting choice (5 b). So once $c$ passes $\tau^{w_i+\kappa}\sig_0$, it can not return to $\tau^{w_i+\kappa_0}\sig_0$. The same argument shows: Once $c$ passes $\tau^{w_{i+1}-\kappa_0}\sig_0$, it cannot return to $\tau^{w_{i+1}-\kappa}\sig_0$.

\underline{Step 3.} ($\e$-close to the $q_i$ between the switches) Let e.g. $i$ be even, so $c$ lies ''near'' $q_1$ between $\tau^{w_i}\s^+$ and $\tau^{w_{i+1}}\s^-$. Choose a time $t_0$ where $c(t_0)$ lies between $\tau^{w_i}\tau^{-1}R,\tau^{w_i}R$ at the right end of $\tau^{w_i}S^+_1(\delta)$ and $t_1$ where $c(t_1)$ lies between $\tau^{w_{i+1}}L,\tau^{w_{i+1}}\tau L$ at the left end of $\tau^{w_{i+1}}S^-_0(\delta)$ in choice (3 a). We can choose $t_0,t_1$ s.th. $c(t_0), c(t_1)$ are $\e_0$-close to points $x,y$ on $q_1$ congruent under some $\tau^m$. Let
\[ \gamma : = \es_{c(t_0), x} ~*~ q_1 ~*~ \es_{y,c(t_1)} . \]
Then by minimality of $c$ we find $2b \e_0 \geq A(\gamma) \geq A(c[t_0,t_1])$. By the assumption on $t_0,t_1$ we have
\[ d(c(t_1),\tau^mc(t_0)) \leq d(c(t_1),y) + d(\tau^mx,\tau^mc(t_0)) \leq 2\e_0. \]
Set $\gamma_0:= c|_{[t_0,t_1]}* \es_{c(t_1), \tau^m c(t_0)}$ and with lemma \ref{hedlund lemma} cut $\gamma_0$ into $m$ prime-periodic segments $\sig_1,...,\sig_m$. If $c|_{[t_0,t_1]}$ leaves the area $\e$-close to $q_1$, then so does $\gamma$ and we find $A(\sig_i)\geq \om(\e)$ for some $i$, while for the others $A(\sig_j)\geq 0$. This shows
\[ 2b \e_0 \geq A(c[t_0,t_1]) = A(\gamma_0) - A(\es_{c(t_1), \tau^m c(t_0)}) \geq \om(\e) - 2b\e_0, \]
contradicting choice (2).

\underline{Step 4.} (at some point $\delta_0$-close to the $q_i$) Let $t_i^\pm$ be the times with $c(t_i^\pm) \in \tau^{w_i\pm\kappa} \sig_0$, s.th. $c[t_i^+,t_{i+1}^-]$ lies entirely between $\tau^{w_i+\kappa}\sig_0$ and $\tau^{w_{i+1}-\kappa}\sig_0$. In $\tau^{w_{i+1}-\kappa}\sig_0$, the points $\tau^mc(t_i^+)$ and $c(t_{i+1}^-)$ are $\e$-close by step 3 (recall $\sig_0$ being a straight segment), where $m:=w_{i+1}-w_i-2\kappa \geq \nu$ (assumption of $W=\{w_i\}$). Arguing as in step 3, we close $c$ to a periodic curve $\gamma_0$ and assuming $c[t_i^+,t_{i+1}^-]$ is $\delta_0$-far away from $q_1$ everywhere, so is $\gamma_0$. Again, $\gamma_0$ is $\tau^m$-periodic but this time all the $\sig_i$ have $A(\sig_i)\geq \om(\delta_0)$, so as in step 3 by comparison of $c$ to $q_1$, we have
\[ \nu \om(\delta_0) \leq A(\gamma_0) \leq A(c[t_i^+,t_{i+1}^-]) + b \e \leq 3b\e,\]
contradicting choice (6).

\underline{Step 5.} ($c(\R)\subset \U^{jk}$) Choose $i \in \{j,...,k\}$ and write $t_-\in [t_{i-1}^+,t_i^-]$ and $t_+\in [t_i^+, t_{i+1}^-]$ for the times found in step 4, where
\[ d(c(t_-),q_{[i]}(T_-))\leq \delta_0 , \quad d(c(t_+),q_{[i+1]}(T_+))\leq \delta_0 \]
for some $T_-,T_+\in\R$ (for $i=j-1,k$ choose $|t_-|,|t_+|$, respectively, large such that the $\delta_0$-condition holds). By step 2 the segment $c[t_-,t_+]$ is the only part of $c$ near the switch $\tau^{w_i}\s^{\ep_i}$, so using step 1 we have to show that $c[t_-,t_+]$ and $\tau^{w_i}S_l^{\ep_i}(\delta)$ are disjoint for $l=0,1$. Write $a,b$ for times where the test curve $c^0 := \tau^{w_i}c^{\ep_i}$ passes $\delta_0$-close $q_{[i]}(T_-), q_{[i+1]}(T_+)$, respectively (cf. choice (5 a)), and set
\[ \hat c := q_{[i]}|_{(-\infty, T_-]} * \es_{q_{[i]}(T_-),c(t_-)} * c|_{[t_-,t_+]} * \es_{c(t_+), q_{[i+1]}(T_+)} * q_{[i+1]}|_{[T_+,\infty)}. \]
We compare $\hat c$ to $c^0$ and make use of $\om^\pm(\delta)>\om^\pm$ in lemma \ref{q_i neighboring}. If $c[t_-, t_+]$ runs into $\tau^{w_i}S^{\ep_i}_l(\delta)$, so does $\hat c\in\Omega^{\ep_i}$, i.e. $J(\hat c) \geq \om^{\ep_i}(\delta)$. Then with choice (5 a) and the minimality of $c$ we obtain
\begin{align*}
\om^{\ep_i}(\delta) & \leq J(\hat c) \leq A(c[t_-, t_+]) + A(q_{[i]}[T_-^{\theta-}, T_-]) + A(q_{[i+1]}[T_+, T_+^{\theta+}]) + 2b\delta_0 \\
& \leq A(c^0[a, b]) + A(q_{[i]}[T_-^{\theta-}, T_-]) + A(q_{[i+1]}[T_+, T_+^{\theta+}]) + 4b\delta_0  \\
& \leq \om^{\ep_i} + \delta_0 + 4b\delta_0
\end{align*}
This is a contradiction to choice (4).
\end{proof}

\subsection{The gap-condition implies positive entropy}\label{gap and entropy}

\begin{prop}\label{h_top=0}
Let $F$ be a Finsler metric on $\T$ that fulfills the gap-condition. Then the topological entropy of the geodesic flow $\phi^t$ of $F$ is positive, $\h(\phi^t,S\T)>0$.
\end{prop}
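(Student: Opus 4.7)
The plan is to use the multibump construction from Theorem \ref{free of constraints} to exhibit, for every $n \in \N$, a $(nT_1, \e_0)$-separated subset of $S\T$ of cardinality $2^n$, for fixed $T_1, \e_0 > 0$ independent of $n$. By Definition \ref{def top ent} this gives
\[ \h(\phi^t, S\T) \geq \frac{\log 2}{T_1} > 0. \]

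Fix $\delta, \kappa, \nu$ as provided by Theorem \ref{free of constraints}, and pick $L \in \N$ so large that the compact switches $\s^\pm$ and their $\tau^L$-translates $\tau^L \s^\pm$ are disjoint at a positive $\R^2$-distance $d_0 > 0$. Set $M := 2\kappa + \nu + L$. For each binary string $b = (b_0, \ldots, b_{n-1}) \in \{0,1\}^n$, define the biinfinite sequence $W(b) = \{w_i(b)\}_{i \in \Z}$ by $w_i(b) := iM + L b_i$ for $0 \leq i \leq n-1$ and $w_i(b) := iM$ otherwise. The gap condition $w_{i+1}(b) - w_i(b) \geq M - L = 2\kappa + \nu$ is satisfied, so Theorem \ref{free of constraints} produces an arc-length geodesic $c^b : \R \to \R^2$ forced through the switch $\tau^{w_i(b)}\s^{\ep_i}$ for every $0 \leq i \leq n-1$. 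Normalize so that $c^b(0)$ lies on the bounded transversal $\tau^{-\kappa}\sigma_0$; by Proposition \ref{linear oscillation}(a) the time to traverse all $n$ switches is at most $nT_1$ for a constant $T_1 > 0$ depending only on $M, \kappa, L, \sigma_0, \theta$, and by unit-speed arc-length parametrisation combined with Lemma \ref{c_F} the $i$-th switch is traversed during a time interval whose width is uniformly bounded in $i$ and $n$.

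The key claim is the uniform separation: there exists $\e_0 > 0$, independent of $n$, such that for every pair $b \neq b'$ in $\{0,1\}^n$ the orbits $\phi^t \dot c^b(0)$ and $\phi^t \dot c^{b'}(0)$ are $\e_0$-separated at some $t \in [0, nT_1]$ in $S\T$. Let $i^*$ be the first disagreement index. Suppose for contradiction that the two orbits stay within $\e_0$ in $S\T$ throughout $[0, nT_1]$. Lifting continuously to $S\R^2$, the deck-transformation alignment $z \in \Z^2$ with $d_{S\R^2}(\dot c^b(t), d\tau_z \dot c^{b'}(t)) < \e_0$ is locally, hence globally, constant on $[0, nT_1]$; since both footpoints start on the bounded transversal $\tau^{-\kappa}\sigma_0$, once $\e_0$ is smaller than half the euclidean diameter of a fundamental domain we are forced to take $z = 0$, so the lifts themselves remain $\e_0$-close in $\R^2$ throughout the interval. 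But $c^b$ is forced to traverse $\tau^{i^*M + L b_{i^*}}\s^{\ep_{i^*}}$, while $c^{b'}$ traverses the disjoint $\tau^{\pm L}$-translate $\tau^{i^*M + L b'_{i^*}}\s^{\ep_{i^*}}$ at a time lying within a uniformly bounded window of the first (by the unit-speed and linear-time bounds). Combined with the geometric disjointness $d_0$ of the two switches, this forces the $\R^2$-distance between $c^b$ and $c^{b'}$ to exceed $\e_0$ at some point in this common window, for $\e_0$ sufficiently small (depending only on $d_0$ and the switch geometry, not on $n$), giving the contradiction.

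The main obstacle is this quantitative uniform separation estimate; it has two delicate ingredients. First, one must preclude that distinct multibump lifts shadow each other via a non-trivial constant deck translation, handled by the boundedness of the initial transversal $\tau^{-\kappa}\sigma_0$. Second, one must translate the disjointness of the two $\tau^{\pm L}$-translated $i^*$-th switches into genuine orbit separation within a bounded common traversal window, which requires combining the linear growth estimate of Proposition \ref{linear oscillation}(a) with the arc-length control from Lemma \ref{c_F}. Once this is in place, the exponentially growing separated sets produce positive topological entropy, completing the proof.
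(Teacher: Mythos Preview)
Your overall strategy matches the paper's: encode $2^n$ multibump geodesics via Theorem~\ref{free of constraints}, show they are $(nT_1,\e_0)$-separated, and conclude $\h\geq(\log 2)/T_1$. The lifting step forcing $z=0$ is also essentially the paper's, though the paper pins down the initial closeness more sharply via choice (3a) in the proof of Theorem~\ref{free of constraints}: all the $c^{jk}$ pass the left part of the first switch $\e_0$-close to $q_{[j]}$, hence $2\e_0$-close to one another in $\R^2$ at $t=0$. Your ``both footpoints start on the bounded transversal $\tau^{-\kappa}\sig_0$'' is not quite enough if the strip $S$ is wide compared to the fundamental domain.

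There is, however, a genuine gap in your separation step. First a terminological point: the geodesics do \emph{not} traverse the switches --- by construction $c^{jk}(\R)\subset\U^{jk}$, which is the \emph{complement} of the translated switches. What is forced is that $c^b$ threads the corridor accompanying the test curve between the two components of each switch. More importantly, your deduction ``the two switches are $d_0$ apart, hence the curves separate at some common time'' does not follow. You know that $c^b(t_1)$ lies in the corridor near $h$-position $i^*M$ and $c^{b'}(t_2)$ in the corridor near $i^*M+L$ with $|t_1-t_2|$ bounded; but nothing in this prevents $c^{b'}(t_1)$ from \emph{also} sitting near position $i^*M$ --- there is no switch constraint on $c^{b'}$ there --- nor $c^b(t_2)$ from sitting near $i^*M+L$. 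Disjointness of the constraint sets is not separation of the trajectories at a common time.

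The fix, and this is exactly what the paper invokes, is step 3 of the proof of Theorem~\ref{free of constraints}: between its consecutive switches each minimizer stays $\e$-close to the appropriate periodic $q_j$. With your encoding, say $b_{i^*}=0$ and $b'_{i^*}=1$, in the $h$-interval strictly between the right end of $\tau^{i^*M}\s^{\ep_{i^*}}$ and the left end of $\tau^{i^*M+L}\s^{\ep_{i^*}}$ (non-empty precisely by your choice of $L$), step 3 forces $c^b$ within $\e$ of $q_{[i^*+1]}$ and $c^{b'}$ within $\e$ of $q_{[i^*]}$. If the lifts stayed $\e_0$-close throughout, then at the time $c^b$ sits in this interval so would $c^{b'}$, giving
\[
d(c^b(t),c^{b'}(t))\ \geq\ \inf_{s,t\in\R}d(q_0(s),q_1(t))-2\e\ >\ 0
\]
by choice (1). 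This is exactly the separation constant $\e_1=\min\{\e_0,\inf d(q_0,q_1)-2\e\}$ the paper writes down. Your encoding is compatible with this argument; only the stated reason for separation needs to be replaced.
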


\begin{proof}
In $S\T$ we can take the euclidean product metric from $T\T \cong \T\times \R^2$. In particular, if base curves $c_v$ are seperated, so are the orbits $\phi^tv$ in $S\T$. The geodesics $c^{jk}$ from theorem \ref{free of constraints} oscillate on a length in $h$-direction bounded by $C\cdot (w_k-w_j)$, cf. proposition \ref{linear oscillation}. Choosing different sequences $W$ in definition \ref{def oscillating}, we optain an exponentially growing number of geodesics, that are $\e_1$-separated for $\e_1 := \min\{\e_0, \inf_{s,t\in\R}d(q_0(s),q_1(t))-2\e \}>0$ (by step 3 and choice (1) in the proof of theorem \ref{free of constraints}) in linearly bounded time in $\R^2$. But they are also $\e_1$-separated in $\T$: each $c^{jk}$ has to pass the left part of the switch $\tau^{w_j}\s^{\ep_j}$ and hence all $c^{jk}$ lie $\e_0$-close at time $t=0$, say, by choice (3 a) in the proof of theorem \ref{free of constraints}. Making $\e_0$ small, this shows that the curves have to separate in $\T$ for otherwise they would lift to curves that are $\e_1$-close. This shows
\[ 0< \limsup_{T\to\infty} \frac{\log s(T,\e_1)}{T} \leq \h(\phi^t,S\T) . \]
\end{proof}


\subsection{Invariant tori for all rotation vectors}\label{step 2}

In this section we study Finsler metrics $F$ not fulfilling the gap-condition and prove theorem I from the introduction. We saw that in this case there are invariant Lipschitz graphs $\TT_h^0\subset S\T$ for all rational $h\in G_F$, cf. definition \ref{def gap} and remark \ref{bem M^pm}. Moreover, the Lipschitz constant of $\TT_h^0$ depends only on $F$.

We define the candidates for invariant tori found in section \ref{section mane sets}. Recall the notation $\NN_h^\pm(q_0,q_1)$ in definition \ref{def N^pm}.

\begin{defn}\label{def invar tori}
For $h\in G_F$ irrational write $\TT_h := \NN_{\nabla\beta(h)}$. For $h\in G_F$ rational write
\[ \TT_h^\pm := \M^h\cup \bigcup_{(q_0,q_1)\text{ neighboring in $\M^h$}} \NN_h^\pm(q_0,q_1) .\]
\end{defn}

\begin{bemerk}
By theorem \ref{J(u)} and remark \ref{bem M^pm}, each of the above sets in $S\T$ is a Lipschitz graph over $0_\T$ and the Lipschitz constant depends only on $F$.
\end{bemerk}

\begin{lemma}[monotonicity of invariant tori w.r.t. rotation vectors]\label{T_h monotone}
Suppose $\TT_i\subset S\T$ are $\phi^t$-invariant graphs over $0_\T$ for $i=1,...,k \geq 3$ and that there are $h_i\in G_F$, such that each orbit on $\TT_i$ has rotation vector $h_i$. Assume that the $h_i$ are cyclically ordered w.r.t. the orientation of $G_F$. Then in each $S_x\T$ the intersections $S_x\T\cap \TT_i$ have the same cyclic order as the $h_i \in G_F$.
\end{lemma}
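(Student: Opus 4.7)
The plan is to lift the problem to the universal cover $\R^2$ and argue in two stages: first use continuity and connectedness to show that the cyclic order of the fibre intersections is constant in $x$, then use the asymptotic behavior of orbits to identify this constant cyclic order with the one on $G_F$.

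Lift each $\TT_i$ to a $\phi^t$- and $\Z^2$-invariant Lipschitz graph $\tilde\TT_i\subset S\R^2$, encoded by a continuous graph map $\tilde v_i:\R^2\to S\R^2$. These lifted graphs are pairwise disjoint: if $\tilde v_i(x)=\tilde v_j(x)$ for some $x$ with $i\neq j$, the $\phi^t$-orbit of this common vector would lie in both $\tilde\TT_i$ and $\tilde\TT_j$ and hence have two distinct rotation vectors $h_i\neq h_j$, a contradiction. Therefore, in each fibre $S_x\R^2\cong S^1$ the $k$ points $\tilde v_1(x),\dots,\tilde v_k(x)$ are pairwise distinct and carry a well-defined cyclic order. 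The map $x\mapsto(\tilde v_1(x),\dots,\tilde v_k(x))$ is continuous into the space of ordered $k$-tuples of distinct points on $S^1$, whose connected components are indexed exactly by the cyclic orders; since $\R^2$ is connected this cyclic order is independent of $x$.

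To identify it with that of $h_1,\dots,h_k$, fix $x\in\R^2$ and consider the lifted orbits $\tilde c_i(t)=\pi\phi^t\tilde v_i(x)$, so that $\tilde c_i(0)=x$, $\dot{\tilde c}_i(0)=\tilde v_i(x)$, and $(\tilde c_i(T)-x)/T\to h_i$ as $T\to\infty$ by the rotation vector assumption and $\phi^t$-invariance of $\TT_i$. For any pair $i\neq j$ the $2$D wedge product
\[
\omega_{ij}(T):=(\tilde c_i(T)-x)\wedge(\tilde c_j(T)-x)
\]
satisfies $\omega_{ij}(T)=T^{2}\,\tilde v_i(x)\wedge\tilde v_j(x)+O(T^{3})$ as $T\to 0^{+}$ (Taylor expansion) and $\omega_{ij}(T)=T^{2}\,h_i\wedge h_j+o(T^{2})$ as $T\to\infty$. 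The sign of each principal term records precisely whether $\tilde v_j(x)$, resp.\ $h_j$, lies counterclockwise of $\tilde v_i(x)$, resp.\ $h_i$, so the lemma reduces to forcing these two signs to agree for every pair $(i,j)$ at some $x$.

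The main obstacle is exactly this sign-matching: the curves $\tilde c_i,\tilde c_j$ emanating from $x$ may cross at some intermediate $T_0>0$, so that $\omega_{ij}$ changes sign and initial and asymptotic cyclic orders could a priori disagree. I would resolve this by using the $x$-independence of the cyclic order from the previous step to pick a favourable base point. By Birkhoff's ergodic theorem applied to any $\phi^{t}$-invariant probability measure supported on $\TT_i$ (whose $v$-average equals $h_i$) together with the Lipschitz graph property, one can find points where $\tilde v_i$ is as close as desired to $h_i/\|h_i\|$; combined with the constancy of the cyclic order and a diagonal argument across the $k$ graphs, one matches every $\tilde v_i$ to its rotation-vector direction up to arbitrarily small error, at which stage $\sgn(\tilde v_i\wedge\tilde v_j)=\sgn(h_i\wedge h_j)$ is automatic. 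For the graphs actually built in definition~\ref{def invar tori} one may alternatively invoke the uniform Hedlund-type estimate of remark~\ref{bem asymptotic directions min rays}, which forces each $\tilde c_i$ to stay within uniformly bounded distance of $x+\R h_i$ and hence makes the direction of $\tilde c_i(T)-x$ converge to $h_i/\|h_i\|$ strongly enough to preclude any sign flip of $\omega_{ij}$.
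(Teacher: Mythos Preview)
Your first step---constancy of the cyclic order via continuity and connectedness---is correct. The gap is in the second step, and neither proposed resolution closes it. Resolution~(a) fails because ergodic averaging gives, for each $i$ \emph{separately}, base points where $\tilde v_i$ points roughly along $h_i$, but no ``diagonal argument'' produces a single $x$ where this holds for all $i$ simultaneously; the constancy from step one does not help, since it tells you the order is the same everywhere without telling you what it is. Resolution~(b) fails because the Hedlund strips of width $2D$ about $x+\R h_i$ all overlap in a bounded neighborhood of $x$, and nothing prevents $\omega_{ij}$ from changing sign in that overlap region.

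The paper's argument is shorter and uses an ingredient you never invoke: minimal geodesics in $\R^2$ have no \emph{successive} intersections. Reducing to $k=3$ and fixing $x$, the three forward rays $c_i(0,\infty)$ are pairwise disjoint (a positive-time intersection together with the common start at $x$ would be successive), so $c_2(0,\infty)$ lies in a single connected component of $\R^2\setminus\bigl(c_1[0,\infty)\cup c_3[0,\infty)\bigr)$. For large $t$ this component is identified by the cyclic position of the cone about $\R_{>0}h_2$ relative to those about $\R_{>0}h_1,\R_{>0}h_3$; for small $t$ it is identified by the initial direction $v_2=\dot c_2(0)$. Since it is one and the same component, the two cyclic orders agree. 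This no-crossing of forward rays is precisely the mechanism linking asymptotic data to initial velocities; note that your $\omega_{ij}(T)$ can vanish even when the rays are disjoint, so it is not the right invariant to track.
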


\begin{proof}
It is enough to prove the statement for $k=3$, then the general case follows. Let $x\in \R^2$ and $v_i = S_x\R^2\cap \TT_i$, $i=1,2,3$. Since successive intersections of the $c_i := c_{v_i}$ in $\R^2$ are excluded by minimality, the curves $c_i(0,\infty)$ are pairwise disjoint and $c_2(0,\infty)$ is contained in one of the connected components of $U := \R^2-\cup_{i=1,3}c_i[0,\infty)$ that are bounded by $c_1[0,\infty), c_3[0,\infty)$. Putting disjoint open cones $C_i$ around $\R_{>0} h_i$ and observing that $c_i[T,\infty)\subset C_i$ for some large $T$, we find the connected component of $U$ containing $c_2(0,\infty)$ by following the line $x+tv_2 \approx c_2(t) \in U$ for small $t>0$, which proves the claim.
\end{proof}

\begin{prop}\label{invar tori}
If the gap-condition is not fulfilled for the Finsler metric $F$, then all $\TT_h,\TT_h^\pm$ are invariant tori for $\phi^t$ (i.e. $\pi(\TT)=\T$).
\end{prop}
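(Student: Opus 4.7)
The plan is to follow the strategy announced at the start of section \ref{top ent and invar tori}: take Hausdorff limits of the auxiliary invariant graphs $\TT_h^0$ from definition \ref{def gap}. Under the no-gap hypothesis, for every rational $h\in G_F$ there exists a choice of $\TT_h^0$ that is a closed $\phi^t$-invariant Lipschitz graph with $\pi(\TT_h^0)=\T$, and the Lipschitz constant of $(\pi|_{\TT_h^0})^{-1}\colon\T\to S\T$ is uniform in $h$ by theorem \ref{J(u)} and remark \ref{bem M^pm}. Crucially, this construction uses only the no-gap hypothesis itself and not the proposition being proved, so the plan is free of circularity.

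First I would fix an arbitrary $h'\in G_F$, approximate it by rational $h_n\to h'$ in $G_F$, and apply Arzel\`a--Ascoli to the Lipschitz inverses $(\pi|_{\TT_{h_n}^0})^{-1}$ to extract a uniformly convergent subsequence with a Lipschitz limit $f\colon\T\to S\T$ whose image $\TT'$ is again a closed $\phi^t$-invariant Lipschitz graph with $\pi(\TT')=\T$. Each orbit in $\TT'$ is a $C^0_{loc}$-limit of orbits from $\TT_{h_n}^0$, which are $\eta_n$-semistatic for some $\eta_n\in\F^{h_n}$. Passing to a sub-subsequence $\eta_n\to\eta\in\F^{h'}$, upper semi-continuity of the \Mane set (proposition \ref{N semi-cont}) places $\TT'\subset\NN_\eta$.

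For irrational $h'$, $\F^{h'}=\{\nabla\beta(h')\}$ is a singleton by proposition \ref{beta diffbar irrational}, so $\TT'\subset\NN_{\nabla\beta(h')}=\TT_{h'}$; combined with $\pi(\TT')=\T$ and the graph property of $\NN_{\nabla\beta(h')}$ (theorem \ref{irrational directions}) this gives $\pi(\TT_{h'})=\T$. For rational $h'$ with $\F^{h'}=[\eta_-,\eta_+]$, I would choose the approximating $h_n$ to approach $h'$ from the cyclic side in $G_F$ associated with $\eta_-$, forcing $\eta_n\to\eta_-$. Theorem \ref{rational directions}(iii) then identifies the orbits of $\TT'$ in each gap of $\pi(\M^{h'})$ as heteroclinics of the $-$ type, yielding $\TT'\subset\TT_{h'}^-$ and $\pi(\TT_{h'}^-)=\T$. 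Approaching from the opposite cyclic side gives $\pi(\TT_{h'}^+)=\T$ symmetrically.

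The main obstacle is ensuring that the Hausdorff limit $\TT'$ lands in the intended \Mane set $\NN_\eta$ with $\eta$ being the prescribed endpoint of $\F^{h'}$; the difficulty is that $\TT_{h_n}^0$ mixes $+$- and $-$-heteroclinics in different gaps of $\M^{h_n}$, so the limit type at a given point $x$ is not obviously uniform. My resolution is to exploit that for $x$ lying in a fixed gap of $\pi(\M^{h'})$ and $h_n$ sufficiently close to $h'$, the orbit of $\TT_{h_n}^0$ through $x$ has asymptotics forced by the neighboring periodic minimizers of $\M^{h_n}$, which converge via upper semi-continuity to the pair $(q_0,q_1)$ bounding the gap in $\M^{h'}$; the cyclic side of approach in $G_F$ then pins down the asymptotic type of the limit heteroclinic, completing the identification.
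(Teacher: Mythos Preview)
Your overall strategy coincides with the paper's: take limits of the rational graphs $\TT_{h_n}^0$, use the uniform Lipschitz bound to get a limiting invariant graph $\TT'$ with $\pi(\TT')=\T$, and then place $\TT'$ inside the correct $\NN_\eta$ via proposition \ref{N semi-cont}. The irrational case is handled correctly. The point where your proposal diverges from the paper, and where it has a genuine gap, is your resolution of the ``main obstacle'' in the rational case.

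Your proposed resolution claims that the orbit of $\TT_{h_n}^0$ through a fixed $x$ in a gap of $\pi(\M^{h'})$ is asymptotic to neighboring periodics of $\M^{h_n}$, and that \emph{these} converge to the pair $(q_0,q_1)$ bounding the gap in $\M^{h'}$. This is false: the periodics in $\M^{h_n}$ have direction $h_n\neq h'$, and as curves in $\R^2$ they do not converge to periodics of direction $h'$. More fundamentally, even granting that $\TT'\subset\NN_\eta$ for the correct endpoint $\eta\in\F^{h'}$ (this part is fine, since both endpoints of $\F^{h_n}$ converge to the same endpoint of $\F^{h'}$ by corollary \ref{alpha C^1}), nothing you have said rules out that $\NN_\eta$, and hence $\TT'$, contains heteroclinics of \emph{both} asymptotic types in a given gap. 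The side of approach in $G_F$ does pin down the type, but not by the mechanism you describe.

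The paper's fix is short and worth internalizing. First, the argument in the proof of theorem \ref{rational directions} (iii) (the ``first upward crossing'' of a $\tau$-periodic transversal) shows that \emph{some} limit vector in the gap lies in $\NN_h^+(q_0,q_1)$ when $h_n\searrow h'$. Second, and this is the step you are missing, if $\TT'$ also contained a vector $w$ whose orbit is a heteroclinic of the opposite type $\NN_h^-(q_0,q_1)$, then $c_v$ and $c_w$ would necessarily intersect in the strip (opposite-type heteroclinics between the same neighbors always cross), so after flowing one finds distinct $v',w'\in\TT'$ with $\pi(v')=\pi(w')$, contradicting the graph property of $\TT'$. Thus $\TT'\subset\TT_{h'}^+$, whence $\pi(\TT_{h'}^+)=\T$; approaching from the other side gives $\TT_{h'}^-$. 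Replace your last paragraph with this two-step argument and the proof is complete.
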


\begin{proof}
For any rational $h\in G_F$ we have a compact $\phi^t$-invariant torus $\TT_h^0$ from definition \ref{def gap}. The set of compact $\phi^t$-invariant sets in $S\T$ is compact w.r.t. the Hausdorff metric, cf. 13.2.1-3 in \cite{katok hasselblatt}. If $h\in G_F$ is arbitrary, choose a monotone rational sequence $h_i\searrow h$ in $G_F$. W.l.o.g. we get a limit set $\TT=\lim \TT_{h_i}^0$ and any $v\in \TT$ is a limit of a sequence $v_i\in \TT_{h_i}^0$. We have $\pi(\TT)=\T$, as for any $x\in\T$ we have some $v_i \in S_x \T \cap\TT_{h_i}^0$ and hence the existence of a $v=\lim v_i\in S_x\T\cap\TT$. Moreover, by the monotonicity in lemma \ref{T_h monotone} and the uniform Lipschitz property of the $\TT_h^0$, the limit set $\TT$ is again a Lipschitz graph over $0_\T$. By construction the $v_i$ are $\eta_i$-semistatics for some sequence $\eta_i\in\F^{h_i}$ and by corollary \ref{alpha C^1} the $\eta_i$ converge monotonically to $\eta_+\in \F^h=[\eta_-,\eta_+]$, so $\TT\subset \NN_{\eta_+}$ by proposition \ref{N semi-cont}. 

In the case where $h$ is irrational we get $\pi(\TT_h) = \pi(\NN_{\nabla\beta(h)}) \supset \pi(\TT) = \T$. For rational $h$ the argument in theorem \ref{rational directions} (iii) shows that for some point $x$ in the gap between two neighboring $q_0,q_1$ from $\M^h$ we obtain a vector $v\in S_x\T\cap \NN_h^+(q_0,q_1)$ as limit of some sequence $v_i\in \TT_{h_i}^0$, i.e. $\TT\cap \NN_h^+(q_0,q_1)\neq \emptyset$. If we also have some $w\in \TT\cap \NN_h^-(q_0,q_1)$, we can follow the flowlines of $v,w$ and w.l.o.g. $\pi(v)=\pi(w)$, as the heteroclinics $c_v,c_w$ always intersect. But since $v\neq w$, this contradicts the graph property of $\TT$. This shows $\TT \subset \TT_h^+$ and hence $\pi(\TT_h^+)\supset \pi(\TT)=\T$. Analogously $\TT_h^-$ projects surjectively.
\end{proof}

We can now prove theorem I announced in the introduction for Finsler metrics. Using proposition \ref{maupertuis}, the theorem carries over to Tonelli Lagrangians and energies above \mane's strict critical value.

\begin{thm}\label{main thm}
If the topological entropy of a Finsler geodesic flow on $\T$ in the unit tangent bundle $S\T$ vanishes, then in $S\T$ there are invariant graphs $\TT_h, \TT_h^\pm\subset S\T$ as in definition \ref{def invar tori} for all $h\in G_F$. If $v\in S\T$ does not lie on one of these invariant graphs, the orbit of $v$ lies in the space between two graphs $\TT_h^-,\TT_h^+$ in $S\T$ of some common rational rotation vector $h$, while these graphs intersect in the periodic minimizers of rotation vector $h$ (the Mather set $\M^h$).
\end{thm}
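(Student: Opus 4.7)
The plan is to deduce Theorem \ref{main thm} by combining the contrapositive of Proposition \ref{h_top=0} with Proposition \ref{invar tori} for the existence of the invariant graphs, and then to use the monotonicity from Lemma \ref{T_h monotone} to pin down the dynamics off the graphs. First I would observe that the assumption $\h(\phi^t,S\T)=0$ excludes the gap-condition at every rational $h\in G_F$: indeed Proposition \ref{h_top=0} says that a triple $(h,q_0,q_1)$ satisfying the gap-condition of Definition \ref{def gap} produces $\h(\phi^t,S\T)>0$. With the gap-condition ruled out, Proposition \ref{invar tori} yields that all the sets $\TT_h$ (for irrational $h$) and $\TT_h^\pm$ (for rational $h$) from Definition \ref{def invar tori} are genuine $\phi^t$-invariant Lipschitz graphs over $0_\T$, and the rotation vector of any orbit lying on such a graph is $h$ by Theorems \ref{irrational directions} and \ref{rational directions}.

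Next, suppose $v\in S\T$ does not lie on any of these graphs, and set $x=\pi(v)$. In the fiber $S_x\T\cong S^1$ the collection
\[ \{\TT_h\cap S_x\T : h\text{ irrational}\}\ \cup\ \{\TT_h^\pm\cap S_x\T : h\text{ rational}\} \]
is cyclically ordered in $S_x\T$ by the cyclic order of $G_F\cong S^1$ (Lemma \ref{T_h monotone}). The vector $v$ therefore lies in one of the complementary arcs. I claim this arc must be an arc whose endpoints are $\TT_{h_0}^-\cap S_x\T$ and $\TT_{h_0}^+\cap S_x\T$ for some rational $h_0\in G_F$, i.e.\ sits above a rational gap in $G_F$. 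If not, the arc would be forced to collapse to a point: as $h$ runs through irrational values the fiber intersections $\TT_h\cap S_x\T$ vary continuously (this uses the uniform Lipschitz bound of Remark \ref{bem M^pm}, the upper semi-continuity of $\NN_\eta$ from Proposition \ref{N semi-cont}, and the differentiability of $\beta$ in irrational directions from Proposition \ref{beta diffbar irrational}), so an arc not situated over a rational gap would shrink to a single irrational graph, forcing $v$ to lie on it. By invariance of $\TT_{h_0}^\pm$, the orbit $\phi^tv$ stays in the open region between $\TT_{h_0}^-$ and $\TT_{h_0}^+$ in $S\T$.

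Finally, to identify $\TT_{h_0}^-\cap\TT_{h_0}^+$, note that by Definition \ref{def invar tori} both contain $\M^{h_0}$, so $\M^{h_0}\subset \TT_{h_0}^-\cap\TT_{h_0}^+$; conversely a point outside $\M^{h_0}$ in the intersection would, by Definition \ref{def N^pm}, belong to both $\NN_{h_0}^-(q_0,q_1)$ and $\NN_{h_0}^+(q_0,q_1)$ for the same neighboring pair $q_0,q_1\in\M^{h_0}$, which is impossible since these heteroclinics have opposite asymptotic behavior ($q_0\to q_1$ versus $q_1\to q_0$ in the strip $S(q_0,q_1)$). The passage from Finsler geodesic flows to Tonelli Lagrangians at energies $k>c_0(L)$ is immediate via Proposition \ref{maupertuis}, since a reparametrisation preserves both the vanishing of topological entropy and the graph property of invariant sets over $0_\T$.

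The main obstacle I expect is the continuity claim in the middle paragraph: making precise that the countable family of fiber intersections at irrational $h$ is dense in the closed set obtained as their closure, with the complement consisting exactly of the open arcs between the two intersections at rational $h_0$ where $\pi(\M^{h_0})\neq\T$. This is a monotone-limit/Hausdorff-convergence argument in the spirit of Proposition \ref{invar tori}, but one has to handle carefully the rational $h_0$ where $\TT_{h_0}^-\neq\TT_{h_0}^+$ versus those where $\TT_{h_0}^-=\TT_{h_0}^+$ (no gap in the fiber).
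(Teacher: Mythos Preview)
Your proof is correct and follows the same overall architecture as the paper: rule out the gap-condition via Proposition \ref{h_top=0}, invoke Proposition \ref{invar tori} for the graphs, then locate a vector $v$ off the graphs in a fiber arc bounded by $\TT_{h_0}^\pm$ for a single rational $h_0$, using the cyclic order of Lemma \ref{T_h monotone}. The identification $\TT_{h_0}^-\cap\TT_{h_0}^+=\M^{h_0}$ and the reduction to Finsler via Proposition \ref{maupertuis} are also the same.

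Where you diverge is in the middle step, and the paper's route is noticeably shorter. Instead of analyzing continuity of $h\mapsto \TT_h\cap S_x\T$ to show that the only gaps in the fiber are the rational ones, the paper observes directly (via Theorems \ref{irrational directions} and \ref{rational directions}) that the union of \emph{all} the graphs $\TT_h,\TT_h^\pm$ is exactly $\NN:=\bigcup_{\eta\in H_F}\NN_\eta$, and that $\NN$ is closed by the upper semi-continuity of Proposition \ref{N semi-cont} together with compactness of $H_F$. One then simply takes the two nearest points $v_-,v_+\in\NN\cap S_x\T$ to $v$; Lemma \ref{T_h monotone} forces $\rho(c_{v_-})=\rho(c_{v_+})=:h$, and the uniqueness of the irrational graph $\TT_h=\NN_{\nabla\beta(h)}$ forces $h$ to be rational. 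This bypasses your ``main obstacle'' entirely: no monotone-limit or Hausdorff-convergence bookkeeping is needed once you know the union is closed. Your continuity argument can be made rigorous (it ultimately rests on the same semi-continuity), but recognizing the union as $\NN$ is the cleaner packaging.
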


\begin{proof}
Apply propositions \ref{h_top=0} and \ref{invar tori}, so in $S\T$ there are the invariant graphs $\TT_h,\TT_h^\pm$. By theorems \ref{irrational directions} and \ref{rational directions}, the union of all $\TT_h, \TT_h^\pm$ is $\NN := \cup_{\eta\in H_F} \NN_\eta$ and by proposition \ref{N semi-cont} $\NN$ is a closed set. Let $v\in S_x\T-\NN$ and $v_-,v_+\in S_x\T\cap \NN$ be the closest vectors to $v$, s.th. $v$ is contained in the (oriented) segment $(v_-,v_+)\subset S_x\T$ and let $h_\pm=\rho(c_{v_\pm})\in G_F$. If $h_-\neq h_+$, we could by lemma \ref{T_h monotone} put some semistatic in both parts of $S_x\T-\{v_-,v_+\}$ and get a contradiction. Hence $h_-=h_+=h$ and by the uniqueness of irrational invariant tori in $\NN$, $h$ is rational. Since the part outside of the space between the $\TT_h^\pm$ contains other invariant tori, $v$ is contained in the space between $\TT_h^\pm$. 
\end{proof}

\begin{bemerk} \begin{itemize}
\item[(i)] For rational $h\in G_F$ we have $\TT_h^-=\TT_h^+$ iff $\pi(\M^h)=\T$ iff $\beta$ is differentiable in $h$ (cf. remark \ref{bem rational directions}). This shows that $\beta:H_1(\T,\R)-\{0\}\to\R$ is $C^1$ iff the unit tangent bundle is foliated by invariant tori $\TT_h$ that are Lipschitz graphs over $0_\T$ (sometimes referred to as complete $C^0$-integrability). For results in this direction cf. \cite{massart sorrentino}.

\item[(ii)] For irrational $h\in G_F$ the torus $\TT_h$ coincides with the Mather set $\M^h$, provided $\pi(\TT_h)=\T$, i.e. each geodesic $c_v(\R)$ with $v\in\TT_h$ is dense in $\T$. This is a version of a more general result, cf. theorem 1 in \cite{rocha}, observing that each orbit in $\TT_h$ is homoclinic to $\M^h$.
\end{itemize}\end{bemerk}


\section{Katok's examples, proof of theorem III}\label{section katok}

In this section we consider Riemannian and Finsler metrics on $\R^2$ and the cylinder $\CC = \R/2\pi \Z\times I$, where $I\subset \R$ is an interval. The notation is independent of the notation in the previous sections. $\skp, \norm$ denote the euclidean scalar product and norm on $\R^2$. We work in the Hamiltonian setting, i.e. in $T^*\R^2$ with the canonical symplectic structure, identifying $T^*\R^2$ with $\R^4$ via
\[ \R^4 = \R^2\times\R^2 \ni \xi=(x,\eta) \cong \la \eta,.\ra \in T_x^*\R^2. \]
For $\xi=(x,\eta)\in\R^4, a\in\R$ and a function $g:\R\to\R$ we write
\begin{align*}
g(\xi)=g(x_2), \quad \|\xi\|=\|\eta\|, \quad a \xi=(x, a\eta). 
\end{align*}
We write $X_H,\phi_H^t$ for the Hamiltonian vector field / flow associated to $H:T^*\R^2\to\R$. Recall $\{ H,H' \}=0$ for the Poisson bracket $\{.,.\}$ iff $H'$ is constant along the flow lines of $\phi_H^t$.

\begin{defn}\label{def rotational}
A Riemannian metric $G$ on $\CC = \R/2\pi \Z\times I$ of the form
\[ G_x = g^2(x_2) \cdot \skp, \quad g:I\to(0,\infty) \]
is called a \emph{rotational metric}.
\end{defn}

\begin{bemerk}\label{bem def rotational}\begin{itemize}
\item[(i)] The geodesic flow in $T^*\R^2$ of $G$ can be described by the Hamiltonian flow of the dual Finsler norm
\[ F_g : T^*\R^2\to\R, \quad F_g = \frac{\norm}{g}. \]
Recall that $X_{\frac{1}{2}F_g^2} = F_g \cdot X_{F_g}$, so dual Finsler norms and dual Finsler energies generate Hamiltonian flows which are reparametrisartions of each other, while $F_g$ has the advantage that $\phi_{F_g}^tr\xi=r\phi_{F_g}^t\xi$. $\phi_{F_g}^t$ admits $\xi\mapsto \eta_1$ as an integral and one easily sees that $F_g,\eta_1$ are independent a.e., i.e. the geodesic flow of $G$ is completely integrable. Moreover, by theorem 1 in \cite{paternain}, the topological entropy of $\phi_{F_g}^t$ vanishes.

\item[(ii)] Let $c=(c_1,c_2,c_3):J\to \R^3$ be an immersed $C^\infty$ space curve with $c_1>0$ and $c_2\equiv 0$ and let $A(s) = \left ( \begin{smallmatrix} \cos s & -\sin s & 0 \\ \sin s & \cos s & 0 \\ 0 & 0 & 1 \end{smallmatrix}\right )$ be the rotational matrix about the $x_3$-axis. Set
\[ \varphi:\R\times J \to \R^3, \quad  \varphi(x)=A(x_1)\cdot c(x_2) . \]
Then $\varphi$ defines (locally) a surface of revolution $\Sigma\subset\R^3$ with the induced Riemannian metric $\skp_{\R^3}|_{T\Sigma\times T\Sigma}$. We have
\[ (\varphi^*\skp_{\R^3})_x(v,w)= \la v, G_0(x) w \ra_{\R^2}, \quad G_0(x) :=\begin{pmatrix} c_1^2(x_2) & 0 \\ 0 & \|\dot c(x_2)\|^2 \end{pmatrix}. \]
We solve $h' = c_1\circ h$ for a function $h:I\to J$. Assuming $c$ to be parametrised by euclidean arc length we obtain for $\tilde \varphi(x) = \varphi(x_1,h(x_2))$ that
\[ (\tilde \varphi^*\skp_{\R^3})_x = g^2(x_2) \cdot \skp, \quad g=c_1\circ h \]
on $\R\times I$. Hence the name rotational metric.
\end{itemize}\end{bemerk}

\begin{defn}
Let $G=g^2 \skp$ be a rotational metric, $\al > 0$ a constant and $\psi : T^*\R^2 \to \R$ a function which is positively homogeneous of degree one in the fibres, smooth off the zero-section and such that $\psi$ commutes with $F_g,\eta_1$, i.e.
\[ \{ F_g,\psi \} = \{ \eta_1,\psi \}=0. \]
We call the Hamiltonians
\[ H_{\al,\psi} := \al \cdot F_g + \psi ~ : T^*\R^2\to \R \]
\emph{generalized Katok-Ziller metrics}.
\end{defn}

The functions $H_{\al,\psi}$ are positively homogeneous of degree one and if $\psi$ is small in $C^2$ ensuring that $\frac{1}{2}H_{\al,\psi}^2$ is strictly convex, $H_{\al,\psi}$ defines a dual Finsler norm. Since $H_{\al,\psi}$ still admits the integrals $F_g,\eta_1$, the geodesic flow of $H_{\al,\psi}$ is completely integrable and has $\h(\phi^t_{H_{\al,\psi}})=0$.

Dual Finsler metrics $H_{\al,\psi}$ with $\psi(\eta)=\beta \cdot \eta_1, \beta =\const$ were first studied by Katok \cite{katok}, later by Ziller \cite{ziller}. Katok also considered a function $\psi(\eta)$ like the one that we will encounter in lemma \ref{periodic flow}.

Katok's construction starts with a periodic geodesic flow. To describe this, let $g_0:\R\to(0,\infty)$ be the function obtained from describing $S^2$ as a surface of revolution by a rotational metric as in remark \ref{bem def rotational} (ii), where $\{x_1=0\}$ is assumed to be mapped to the equator in $S^2$. One can check that
\[ g_0(t)=\frac{2e^t}{1+e^{2t}}, \quad g_0:\R\to (0,\infty). \]
Observe that $g_0$ is strictly decreasing in $[0,\infty)$ and $g_0(-t)=g_0(t)$. The flow $\phi_{F_{g_0}}^t$ in $T^*\CC$ for $G=g_0^2\skp$ is periodic with period $2\pi$, where $\CC=\R/2\pi\Z \times\R$.

In the next lemma we study surfaces of revolution $\Sigma\subset\R^3$, such that $\Sigma$ intersects the round sphere $S^2$ in a belt around the equator. At the level of rotational metrics, this means that around $\{x_1=0\}$, $g$ coincides with $g_0$.

\begin{lemma}\label{periodic flow}
Consider a rotational metric $G=g^2 \skp$ on $\R\times I$, where $I\subset \R$ is an interval containing $0$ and assume that
\[ \exists ~ b>0, [-b,b]\subset I : \quad  g|_{[-b,b]} = g_0|_{[-b,b]} . \]
For $a\in (0,b)$ denote by $M_a \subset T^*\R^2$ the set
\[ \left \{ r \cdot \xi = (x,r\cdot \eta) \in T^*\R^2  ~ : ~ r>0 ~  , ~ F_g(\xi) =1 ~ , ~ |x_2|\leq a  ~ , ~ \eta_1 \geq g_0(a) ~ \right \} . \]
Chose $0<a_0<a_1<b$ and two functions $\chi:\R^2\to \R, f:\R\to\R$, where $f$ is smooth, with
\[ \chi(x_1,x_2)= \begin{cases}  1 & : |x_2| \leq b \\ 0 & : |x_2|>b \end{cases} , \qquad f(t) = \begin{cases}  1 & : t \geq g_0(a_0)  \\ 0 & : t \leq g_0(a_1)  \end{cases} \]
and set
\[ \psi: T^*\R^2 \to \R, \quad \psi(x,\eta) := \chi(x) \cdot f(\eta_1/F_g(\eta)) \cdot \eta_1. \]
Then $\psi$ is smooth outside $0_{\R^2}$ and $\{F_g,\eta\}=\{\eta_1,\psi\}=0$. For $|\beta|$ small consider the generalized Katok-Ziller metrics
\[ F_{\al,\beta} : = H_{\al,\beta\cdot \psi} \]
on the cylinder $\CC=\R/2\pi \Z \times I$. Then
\[ \forall t,(\al,\beta): ~ \phi^t_{F_{\al,\beta}}M_{a_0}=M_{a_0}, ~ \phi^t_{F_{\al,\beta}}M_{a_1}=M_{a_1} \]
and
\[ \phi_{F_{1,0}}^{2\pi}|_{M_{a_0}} = \phi_{F_{0,1}}^{2\pi}|_{M_{a_0}}=\id_{M_{a_0}} . \]
\end{lemma}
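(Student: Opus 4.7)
The plan is to verify the four assertions in order: smoothness of $\psi$, the two Poisson bracket identities, invariance of $M_{a_0}$ and $M_{a_1}$ under $\phi^t_{F_{\al,\beta}}$, and the explicit identification $\phi^{2\pi}_{F_{1,0}} = \phi^{2\pi}_{F_{0,1}} = \id$ on $M_{a_0}$. Throughout I will read the piecewise formula for $\chi$ as a smooth bump fixed once and for all so that $\chi \equiv 1$ on $\{|x_2| \le b'\}$ and $\supp \chi \subset \{|x_2| \le b\}$ for some intermediate $a_1 < b' < b$, so that $\supp \chi'$ lies in the annulus $\{b' \le |x_2| \le b\}$, which still sits inside the spherical strip $\{g = g_0\}$ and on which $g_0(|x_2|) \le g_0(b') < g_0(a_1)$.

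With this choice $\psi = \chi(x_2) \cdot f(\eta_1/F_g) \cdot \eta_1$ is a product of factors smooth on $T^*\R^2 \setminus 0_{\R^2}$ (using $F_g > 0$ off the zero section), hence smooth there. Because $\psi$ depends only on $(x_2,\eta_1,\eta_2)$, I get $\{\eta_1,\psi\} = -\partial_{x_1}\psi = 0$ immediately. For $\{F_g,\psi\} = 0$, I would first note $\{F_g,\eta_1\} = -\partial_{x_1}F_g = 0$, whence $\{F_g,\eta_1/F_g\} = 0$ and $\{F_g, f(\eta_1/F_g)\eta_1\} = 0$; Leibniz then reduces the required identity to
\[ \{F_g,\psi\} = \{F_g,\chi\} \cdot f(\eta_1/F_g)\cdot\eta_1. \]
On $\supp\chi'$ one has $g = g_0$, hence $|\eta_1/F_g| = |\eta_1|\,g_0(x_2)/\|\eta\| \le g_0(|x_2|) \le g_0(a_1)$, so $f(\eta_1/F_g) = 0$ throughout $\supp\chi'$ and the displayed expression vanishes.

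The bracket identities make $F_g$, $\eta_1$, and hence the ratio $\eta_1/F_g$, integrals of $\phi^t_{F_{\al,\beta}}$. Rewriting $M_a$ as the scaling-invariant cone
\[ M_a = \{\xi : F_g(\xi) > 0,\ \eta_1/F_g(\xi) \ge g_0(a),\ |x_2|\le a\}, \]
the first two defining conditions are preserved along any orbit. The third is enforced dynamically: along any orbit $\|\eta(t)\| = g(x_2(t))\,F_g$ gives $g(x_2(t)) \ge \eta_1/F_g \ge g_0(a)$, while continuity of the orbit together with $g(\pm b) = g_0(b) < g_0(a)$ forbids the trajectory from reaching $|x_2| = b$; staying in the spherical strip where $g = g_0$ and using monotonicity of $g_0$ on $[0,\infty)$ then yields $|x_2(t)| \le a$. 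Applied with $a = a_0, a_1$ this gives invariance of both cones.

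For the two time-$2\pi$ maps on $M_{a_0}$: on $\{F_g = 1\} \cap M_{a_0}$ the Hamiltonian flow of $F_g = F_{1,0}$ coincides with the unit-speed geodesic flow of the round 2-sphere along equatorial-belt geodesics, every one of which is a great circle of length $2\pi$; since $X_{F_g}$ commutes with fibrewise scaling $(x,\eta)\mapsto(x,r\eta)$ (a direct consequence of the homogeneity $F_g(x,r\eta) = rF_g(x,\eta)$, yielding $\phi^t_{F_g}(x,r\eta) = (x(t), r\eta(t))$), this $2\pi$-periodicity propagates from the slice $\{F_g = 1\}$ to all of $M_{a_0}$. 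For $F_{0,1} = \psi$ I would use that on $M_{a_0}$ both $\chi$ and $f(\eta_1/F_g)$ equal $1$ and all their derivatives $\chi'$ and $f'$ vanish, so $X_\psi$ coincides with $X_{\eta_1} = \partial_{x_1}$ on $M_{a_0}$; its flow is the translation $(x_1,x_2,\eta_1,\eta_2) \mapsto (x_1+t,x_2,\eta_1,\eta_2)$, which is $2\pi$-periodic because $x_1 \in \R/2\pi\Z$. The one delicate step in the whole argument is the bracket computation $\{F_g,\psi\} = 0$: it forces $\supp\chi'$ to lie where $f(\eta_1/F_g)\eta_1$ already vanishes identically, which is exactly what the sandwich $a_1 < b' < b$ together with $g = g_0$ on $[-b,b]$ and strict monotonicity of $g_0$ allows us to arrange.
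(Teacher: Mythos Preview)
Your proof is correct and follows essentially the same approach as the paper. Two minor organizational differences are worth noting: the paper keeps $\chi$ as the literal step function and instead argues that $\psi$ vanishes identically outside $M_{a_1}$ (hence near the jump of $\chi$), which simultaneously yields smoothness of $\psi$, the bracket identities, and invariance of $M_{a_1}$; and the paper proves invariance of the cones under $\phi_{F_g}^t$ and $\phi_\psi^t$ separately (implicitly combining via $\{F_g,\psi\}=0$), whereas you argue directly for $\phi_{F_{\al,\beta}}^t$ using the conserved quantities $F_g$ and $\eta_1$. Both variants rest on the same key observation that $|\eta_1|/F_g \le g(x_2)$ forces $f(\eta_1/F_g)=0$ wherever $|x_2|>a_1$ inside the spherical strip.
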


\begin{bemerk}\begin{itemize}
\item[(i)] $M_a$ can be thought of as a forward cone in each $T_x\R^2$, where for $x_2=0$ the cone is opened the widest and for $|x_2|\nearrow a$ the cone becomes a ray. For $|x_2|>a$ the cone is empty.

\item[(ii)] Obviously the so defined $\psi$ is homogeneous of degree one. Moreover, $\psi$ is independent of $x_1$ and hence $F_{\al,\beta}$ is defined on the cylinder $\CC$.
\end{itemize}\end{bemerk}

\begin{proof}
\underline{Step 1} (invariance of $M_a$ under $\phi_{F_g}^t$ for all $a\in (0,b)$). By $\phi_{F_{\al,\beta}}^t r\xi=r\phi_{F_{\al,\beta}}^t \xi$ for all $(\al,\beta)$ we can restrict ourselves to $\{F_g=1\}$. Let $\xi=(x,\eta)$ with $F_g(\xi)=1$, then $\eta_1^2=g^2(x_2) - \eta_2^2 \leq g^2(x_2)$. Hence, if $|x_2|\in (a,b]$, then $\eta_1 < g_0(a)$, while $\eta_1$ is $\phi_{F_g}^t$-invariant. This shows that if $\xi\in M_a$, then $\phi_{F_g}^t\xi$ cannot leave $\{|x_2|\leq a\}$.

\underline{Step 2} (invariance of $M_{a_0},M_{a_1}$ under $\phi_\psi^t$). Observe that $\psi(\xi)= \eta_1$ for $\xi=(x,\eta)\in M_{a_0}$, so in $M_{a_0}$ we have $\phi^t_\psi(x,\eta)=(x+te_1,\eta)$ and hence invariance of $M_{a_0}$. Now let $r\xi=(x,r\eta)\notin M_{a_1}$ with $F_g(\xi)=1$, then either $|x_2|>a_1$ or $\eta_1< g_0(a_1)$. In the latter case we have $f(r\eta_1/F_g(r\xi))=f(\eta_1)=0$. In the first case it follows that either $\chi=0$ or $|x_2|\in (a_1,b]$. Again in the latter case we obtain $g_0(x_2)<g_0(a_1)$ and hence $|\eta_1|=\sqrt{g_0^2(x_2)-\eta_2^2} < g_0(a_1)$ and again $f=0$. Altogether we find $\psi=0$ outside $M_{a_1}$, so the invariance of the complement of $M_{a_1}$ follows. Moreover, since $\psi=0$ in an open neighborhood of $\{\chi=0\}$, $\psi$ is smooth in $T^*\R^2- 0_{\R^2}$.

\underline{Step 3} ($F_{\al,\beta}$ is a generalized Katok-Ziller metric for small $\beta$). By step 2 we have $\psi=0$ outside $M_{a_1}$, so the equalities $\{ F_g,\psi \} = \{ \eta_1,\psi \}=0$ are trivial. But $\psi$ is locally independent of $\chi$ inside $M_{a_1}$ and here $\psi$ is defined in terms of $\eta_1, F_g$, so the equalities hold also in this case.

\underline{Step 4} (periodicity of the flows $\phi_{F_{1,0}}^t,\phi_{F_{0,1}}^t$). We saw above that in $M_{a_0}$ we have $\psi= \eta_1$. The periodicity of $\phi_{F_{0,1}}^t$ follows. The periodicity of $\phi_{F_{1,0}}^t=\phi_{F_g}^t$ follows, since in the $\phi_{F_g}^t$-invariant set $M_{a_0}$ the Riemannian metric $G$ is the same as the rotational metric $G=g_0\cdot \skp$ obtained from the round sphere $S^2$ and in this case we know that $\phi^t_{F_{g_0}}$ is $2\pi$-periodic.
\end{proof}

We can now readily apply theorem A from \cite{katok} and prove the theorem III stated in the introduction.

\begin{proof}[Proof of theorem III]
\underline{Step 1} (existence of a non-reversible $F_0$). We work in $\CC=\R/2\pi\Z\times I$ with a rotational metric as above, use the same notation as Katok, just writing $F_{\al,\beta}$ instead of $H_{\al,\beta}$, and apply theorem A from \cite{katok}, where we take $M^{2m}$ to be the set $M_{a_0}\subset (T^*\R^2,\om)$ defined in lemma \ref{periodic flow} (the number $a_0$ is defined by the width of the strip around the equator in $\Sigma\cap S^2$) and $(\al_0,\beta_0)=(1,0)$. In particular
\[ M_D = \{ \xi\in M_{a_0}: X_{F_{1,0}}(\xi), X_{F_{0,1}}(\xi) \text{ lin. dependent} \} \]
consists precisely of the equator in the various velocities. Properties (i) and (ii) are proven for $F_0^* := \HH$ in $M_{a_0}=: Z^*$, where $\HH$ is given by Katok's theorem. Moreover $F_0^*$ coincides with a generalized Katok-Ziller metric $F_{\al,\beta}$, as defined in lemma \ref{periodic flow}, together with all its derivatives in $\partial M_{a_0}$ and in a single periodic orbit in $\Int (M_{a_0})$ (the equator, i.e. $M_D$). Here $(\al,\beta)$ is arbitrarily close to $(1,0)$.

To show that $(F_0^*)^2$ is stricly convex, just observe that in Katok's theorem $F_0^*, F_{\al,\beta}$ are close in $C^2$ and that $F_{\al,\beta}^2$ is strictly convex by the smallness of $\beta$. We extend $F_0^*$ to all of $T^*\CC$ using $F_{\al,\beta}$. Finally, property (iii) follows from (ii): the entropy of $F_{\al,\beta}$ vanishes, since the flow is completely integrable. The claim now follows from the general fact that the topological entropy is bounded by the growth of the number of periodic orbits (cf. corollary 4.4 in \cite{katok1}), which is sub-exponential for $F_0^*$ in $M_{a_0}$ by (ii).

\underline{Step 2} (make $F_0$ reversible). Recall from the proof of lemma \ref{periodic flow} that there is the neighborbood $M_{a_1}\supset M_{a_0}$, such that in $U:= T^*\CC-M_{a_1}$ we have $\psi=0$, i.e. here $F_0^*=\al \cdot F_g$, which is just the rescaled Riemannian metric on $\Sigma$. Hence we can define a new dual Finsler metric
\[ F_1^*(x,\eta) := \begin{cases} F_0^*(x,\eta) & : \eta_1\geq 0 \\ F_0^*(x,-\eta) & : \eta_1\leq 0 \end{cases} \]
on $\R\times I$, which is now a reversible dual Finsler metric. The metric is unchanged in the $\phi_{F_0^*}^t$-invariant set $\{\eta_1\geq 0\}$ and in $\{\eta_1\leq 0\}$ the geodesic flow of $F_1^*$ is just the reversed flow of $F_0^*$ from $\{\eta_1\geq 0\}$.
\end{proof}

\end{document}